% From Jill, 16 May, 2010.
% Geir edits, 7 April, 2010.
% Geir edits, 31 March, 2010.
% Geir edits, 2 March, 2010.
% The case $n\geq 4$ is incomplete! ==> Removed, Geir.
% Geir edits, 17--19, 23--28 Feb., 2010.
% Draft from Jill, November, 2009.

% Template for an article on Chapter 2 of Jill's thesis.
% Made May 28, 2009.
% Adapted for longer article November 18, 2009.

% all comments directly to Jill labelled "XXXJILL", Geir.

\documentclass[11pt]{article}

\newcommand{\nats}{\mbox{$\mathbb N$}}
\newcommand{\rats}{\mbox{$\mathbb Q$}}
\newcommand{\ints}{\mbox{$\mathbb Z$}}

\newcommand{\lcm}{{\mathop{\mathrm{lcm}}\nolimits}}
\newcommand{\Gal}{{\mathop{\mathrm{Gal}}\nolimits}}
\newcommand{\EC}{{\mathop{\mathrm{EC}}\nolimits}}
\newcommand{\ES}{{\mathop{\mathrm{ES}}\nolimits}}
\newcommand{\cs}{{\mathop{\mathrm{cs}}\nolimits}}
\newcommand{\comment}[1]{}

%\renewcommand{\baselinestretch}{1.5} %% XXXJILL 1.5 spacing is enough, Geir.
% XXXJILL just checking if this will shorten the paper a tad.

\setlength{\textwidth}{6.5in}  %was 6.0, 6.5
\setlength{\textheight}{8.6in} %was 8 , 8.6
\setlength{\oddsidemargin}{0in} %was {-0.18in}
\setlength{\evensidemargin}{0in} %was {-0.18in}
\setlength{\headheight}{0in}
\setlength{\headsep}{10pt}
\setlength{\topmargin} {0.5in} %was {0}
\setlength{\itemsep}{0in}
\setlength{\footskip}{0.4in}

\pagestyle{plain}
\usepackage{amsmath}  % allows stack command for summation
\usepackage{amssymb}
\usepackage{graphicx}                   %for imported graphics
\usepackage{stackrel}		% to stack 3 things

% Proofs ...

\def\squarebox#1{\hbox to #1{\hfill\vbox to #1{\vfill}}}
\def\qed{\hspace*{\fill}
        \vbox{\hrule\hbox{\vrule\squarebox{.667em}\vrule}\hrule}\smallskip}
\newenvironment{proof}{\begin{trivlist}
  \item[\hspace{\labelsep}{\em\noindent Proof.~}]
  }{\qed\end{trivlist}}
 % Pfs ending on displayed
 %% And eliminate \end{pf} etc.
%%\newenvironment{sketch}{\noindent {\bf Proof sketch: } }

\newtheorem{lemma}{Lemma}[section]
\newtheorem{theorem}[lemma]{Theorem}
\newtheorem{corollary}[lemma]{Corollary}
\newtheorem{proposition}[lemma]{Proposition}
\newtheorem{claim}[lemma]{Claim}
\newtheorem{observation}[lemma]{Observation}
\newtheorem{definition}[lemma]{Definition}

\def\squareforqed{\hbox{\rlap{$\sqcap$}$\sqcup$}}
\def\qed{\ifmmode\squareforqed\else{\unskip\nobreak\hfil
\penalty50\hskip1em\null\nobreak\hfil\squareforqed
\parfillskip=0pt\finalhyphendemerits=0\endgraf}\fi}

% Pseudo-code
\newlength{\tablength}
\newlength{\spacelength}
\settowidth{\tablength}{\mbox{\ \ \ \ \ \ \ \ }}
\settowidth{\spacelength}{\mbox{\ }}

\newcommand{\tabstar}{\hspace*{\tablength}}
\newcommand{\spacestar}{\hspace*{\spacelength}}
\def\obeytabs{\catcode`\^^I=\active}
{\obeytabs\global\let^^I=\tabstar}
{\obeyspaces\global\let =\spacestar}
\newenvironment{display}{\begingroup\obeylines\obeyspaces\obeytabs}{\endgroup}
\newenvironment{prog}{\begin{display}\parskip0pt\sf}{\end{display}}

\title{On integer radii coin representations of the wheel graph}
%\title{Algebraic equations of the wheel graph and their rational solutions}

\author{
{\sl Geir Agnarsson}
\thanks{Department of Mathematical Sciences,
George Mason University,
MS 3F2,
4400 University Drive,
Fairfax, VA -- 22030, USA,
{\tt geir@math.gmu.edu}}
\and
{\sl Jill Bigley Dunham}
\thanks{Department of Mathematics,
Hood College, 401 Rosemont Avenue, Frederick, MD -- 21701, USA,
{\tt dunham@hood.edu}} 
}

\date{}

\begin{document}

\maketitle

\begin{abstract}
%\noindent
% needs to be replaced- what is this paper about?
A {\em flower} is a coin graph representation
of the wheel graph. A {\em petal} of the wheel graph is an edge
to the center vertex. In this paper we investigate flowers whose
coins have integer radii. For an $n$-petaled flower we
show there is a unique irreducible polynomial $P_n$ in $n$ variables 
over the integers $\ints$, the affine variety of which contains
the cosines of the internal angles formed by the petals of the flower.
We also establish a recursion that these irreducible polynomials
satisfy. Using the polynomials $P_n$, we develop a parameterization 
for all the integer radii of the coins of the 3-petal flower.

\vspace{3 mm}

\noindent {\bf 2000 MSC:} 
05C10, % Planar graphs; geometric and topological aspects of graph theory
05C25,	% Graphs and abstract algebra (groups, rings, fields, etc.)
05C31,	% Graph polynomials
05C35. % Extremal problems 

\vspace{2 mm}

\noindent {\bf Keywords:}
planar graph,
coin graph,
flower,
polynomial ring,
Galois theory
\end{abstract}

\section{Introduction}
\label{sec:intro}

By a {\em coin graph} we mean a graph whose vertices can be 
represented as closed, non-overlapping disks in the Euclidean 
plane such that two vertices are adjacent
if and only if their corresponding disks intersect at
their boundaries, i.e.~they touch. For $n\in\nats$ the {\em wheel graph}
$W_n$ on $n+1$ vertices is the simple graph obtained by connecting an
additional center vertex to all the vertices of the cycle $C_n$ on $n$ vertices.
These additional edges are called {\em petals}. A coin graph representation
of a wheel graph is called a {\em flower}. In Figure~\ref{fig:flower-not}
we see an example of a flower on the left, and a configuration
of coins that does not form a flower on the right.
\begin{figure}[tb]
\begin{center}
%\leavevmode
%\includegraphics[scale=1]{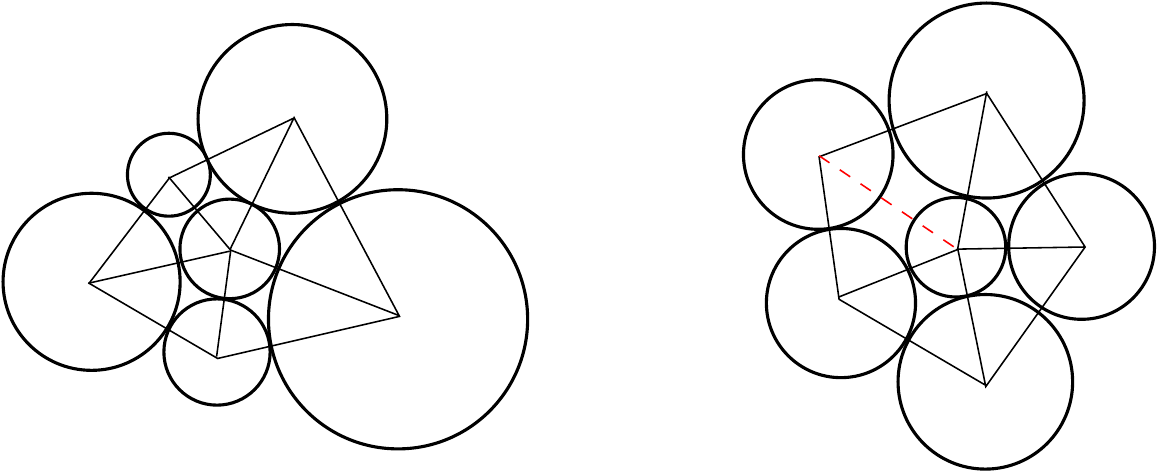}  XXX strange.., the pdf didn't work! 
\includegraphics[scale=1]{flowers1.pdf}
\caption{Examples of a flower and a non-flower.}
\label{fig:flower-not}
\end{center}
\end{figure}

The study of flowers is central in many discrete geometrical settings,
in particular in circle packings~\cite{Stephenson} and also in the study
of planar graphs in general, since every planar graph has a coin
graph representation. That a coin graph is planar is clear, but
that the converse is true is a nontrivial topological result, usually
credited to Thurston~\cite{Thurston}, but is also due to
both Koebe~\cite{Koebe} and Andreev~\cite{Andreev}. For a brief history of this
result we refer to~\cite[p.~118]{Ziegler}. Numerous simply stated,
but extremely hard problems involving coin graphs can be found
in a recent and excellent collection of research problems
in discrete geometry~\cite{Brass}. Also, 
Brightwell and Scheinerman~\cite{Scheinerman} 
explored integral representations of coin graphs, where the radii of the 
coins can take arbitrary positive integer values.

In this paper we study algebraic relations the radii of flowers
must satisfy. We first show that for every $n\geq 3$ the cosines 
of the central angles of an $n$-petal flower are contained
in the affine variety of an irreducible polynomial $P_n$ 
in $n$ variables over the integers. We note that the cosines
are more interesting than the sines in this case, for the mere
reason that cosines of the angles of an integer-sided triangle
are all rational. In particular, for the case $n=3$
we then find a parametrization of all integer 
$n$-tuples in this variety of $P_n$. 
Also for the case of $n=3$,
we obtain all rational, and hence integer, radii of four
mutually tangent circles, sometimes called {\em Soddy circles}
as Frederick Soddy rediscovered Descartes' Circle Theorem
in 1936~\cite{Austin}. Our parametrization differs from 
the one obtained by Graham et al.~in~\cite{Graham} as it
is free of any equations relating the parameters.

The rest of the paper is organized as follows: in Section~\ref{sec:setup}
we state our main terminology and definitions.
We also present and discuss some basic observations and 
consequences from the definitions. In Section~\ref{sec:irr-poly} 
we use Galois theory to formally define the polynomials
$P_n(x_1,\ldots,x_n)$ whose affine variety contains
$(\cos\theta_1,\ldots,\cos\theta_n)$ where 
$\theta_1,\ldots,\theta_n$ are the internal angles
of an $n$-petaled flower. We then prove our main result
of this paper, that each $P_n$ is an irreducible polynomial
over $\rats$. In Section~\ref{sec:n=3} we consider the special
case of a 3-petal flower. In this case we have
four mutually tangent Soddy circles, and we derive a free
parametrization of all rational radii of the outer circles
when the inner circle has radius one. This will then yield
an equation free parametrization of all integer radii
of four mutually tangent Soddy circles. 

\section{Definitions, setup and basic informal observations}
\label{sec:setup}
 
In what follows $\nats = \{1,2,3,\ldots\}$ is the set of natural numbers.
For $n\in\nats$ we let $[n] = \{1,\ldots,n\}$. 
For each $n\in\nats$, an $n$-petal flower imposes a relation on the radii 
of its coins. For such a flower, asssume the radius of the 
center coin is $r$ and the radii of the $n$ outer coins are 
$r_1, \ldots, r_n$ in clockwise order. We first note
that there is an obvious equation relating the $r_i$:
for each pair of radii $r_i$ and $r_{i+1}$ of consecutive petals 
around a center coin of radius $r$ we obtain a triangle with 
sides of length $r+r_i$, $r+r_{i+1}$, and  $r_i + r_{i+1}$ and the 
angle $\theta_i$ at the center vertex is given by
\begin{equation}
\label{eqn:coslaw}
\theta_i = \arccos \left(\frac{(r+r_i)^2 + (r+r_{i+1})^2 - 
(r_i + r_{i+1})^2}{2(r + r_i)(r+r_{i+1})}\right).
\end{equation}
The equation that determines a flower with petals of radii 
$r_1, \ldots, r_n$ is
\begin{equation}
\label{eqn:2pi}
\sum_{i=1}^n \theta_i = 2\pi.
\end{equation}
For $G \subseteq S_n$, a polynomial $f$ is {\em $G$-symmetric} if 
$f(x_1, \dots, x_n) = f(x_{\sigma(1)}, \dots, x_{\sigma(n)})$ for all 
$\sigma \in G$. We see that (\ref{eqn:2pi}) is a $D_n$-symmetric 
function in terms of $r_1/r, \ldots, r_n/r$, where $D_n$ is the 
dihedral group of symmetries on the regular polygon with $n$ sides.  
In~\cite{Stanley} it is shown that for {\em reflection groups} like the 
dihedral group $D_n$ there is a basis of polynomials just like the 
elementary symmetric functions for the symmetric group $S_n$.
As we will discuss, if $x_i = \cos\theta_i$ for each $i \in [n]$, then
(\ref{eqn:2pi}) will corresponds to a symmetric polynomial 
$f\in\mathbb{Q}[x_1,\ldots, x_n]$. Also, 
if the center coin has radius $r=1$, and so 
$\theta_i = \theta_i(1, r_i, r_{i+1})$ is a function of only the
two consequtive radii $r_i$ and $r_{i+1}$, then (\ref{eqn:2pi}) 
will corresponds to a $D_n$-symmetric polynomial 
$g\in \mathbb{Q}[r_1,\ldots,r_k]$. In particular, for general 
radius $r$ of the center vertex (replacing $r_i$ with $r_i/r$), 
if $d= \deg(g)$, which we define as the sum degree, then 
$r^d g\left(\frac{r_1}{r}, \ldots, \frac{r_n}{r}\right) \in 
\mathbb{Q}[r, r_1, \ldots, r_n]$ is a homogeneous element and
\begin{equation}
\label{eqn:g}
r^d g\left(\frac{r_1}{r}, \ldots, \frac{r_k}{r}\right) = 
\sum_{i=0}^d g_i r^i \in \mathbb{Q}[r_1, \ldots, r_k][r],
\end{equation}
where each $g_i \in \mathbb{Q}[r_1, \ldots, r_n]$ is a $D_n$-symmetric 
polynomial. Although intuitively clear, we will in what follows 
demonstrate this claim informally in an explicit example.  
To obtain a symmetric function $f=f(x_1, \ldots, x_n)$ we will take the 
cosine of both sides of (\ref{eqn:2pi}). Using the relation 
$e^{i\theta} = \cos\theta+ i \sin\theta$ and then taking the real
and imaginary parts of 
$e^{i(\theta_1 + \ldots + \theta_n)}= e^{i \theta_1} \cdots e^{i \theta_n}$,
we obtain the following technical lemma.
\begin{lemma}
\label{lmm:sum}
For $n \geq 1$ we have the following generalized addition formulae 
for $\cos$ and $\sin$:
\[
\cos \left( \sum_{i=1}^n \theta_i \right) =  
\sum_{2^{n-1} \ terms} \pm \cs(\theta_1)\cs(\theta_2)\cdots \cs(\theta_n),
\]
where the sum on the right is taken over the $2^{n-1}$ possible terms where 
(i) each $\cs$-function represents either $\sin$ or $\cos$ and 
(ii) each term has an even number $2e$ of $\sin$-functions and the 
sign of the term is given by $(-1)^e$.

Similarly for $\sin$ we have
\begin{eqnarray*}
\sin \left( \sum_{i=1}^n \theta_i \right)	& = & 
\sum_{2^{n-1} \ terms} \pm \cs(\theta_1)\cs(\theta_2)\cdots \cs(\theta_n),
\end{eqnarray*}
where the sum on the right is taken over the $2^{n-1}$ possible terms where 
(i) each $\cs$-function represents either $\sin$ or $\cos$ and 
(ii) each term has an odd number $2e+1$ of $\sin$-functions and the sign of 
the term is given by $(-1)^e$.
\end{lemma}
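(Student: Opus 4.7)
The plan is to derive both formulae simultaneously from Euler's identity, by expanding a product of $n$ complex exponentials and separating real and imaginary parts. The bookkeeping on signs and term counts is the only content; there is no real obstacle.

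First I would write
\[
e^{i(\theta_1+\cdots+\theta_n)} \;=\; \prod_{j=1}^n e^{i\theta_j} \;=\; \prod_{j=1}^n \bigl(\cos\theta_j + i\sin\theta_j\bigr),
\]
and fully distribute the product on the right. This yields exactly $2^n$ terms, each of the form
\[
i^{\,k(S)} \prod_{j\notin S}\cos\theta_j \prod_{j\in S}\sin\theta_j,
\]
indexed by subsets $S\subseteq[n]$, where $k(S)=|S|$ counts the number of sine factors chosen. Thus each term is, up to an $i^{k(S)}$ prefactor, precisely one of the $\cs$-products described in the lemma.

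Next I would split the $2^n$ terms according to the parity of $k(S)$. When $k(S)=2e$ the prefactor is $i^{2e}=(-1)^e$, which is real; when $k(S)=2e+1$ the prefactor is $i^{2e+1}=(-1)^e i$, which is purely imaginary. Taking the real and imaginary parts of both sides of the displayed identity, and using $e^{i(\theta_1+\cdots+\theta_n)}=\cos(\sum\theta_j)+i\sin(\sum\theta_j)$, gives
\[
\cos\!\Bigl(\sum_{j=1}^n \theta_j\Bigr) \;=\; \sum_{\substack{S\subseteq[n]\\|S|=2e}} (-1)^e \prod_{j\notin S}\cos\theta_j\prod_{j\in S}\sin\theta_j,
\]
and the analogous formula for $\sin$ with $|S|=2e+1$ and the same sign rule $(-1)^e$.

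Finally I would verify the count: the number of subsets of $[n]$ of even size equals the number of odd size, and each is $2^{n-1}$ for $n\geq 1$, which follows from evaluating $(1+1)^n$ and $(1-1)^n$ against $\sum_k\binom{n}{k}x^k$ and averaging. This matches the $2^{n-1}$ terms claimed in each formula, completing the proof. The main (and only) potential pitfall is a sign miscalculation in tracking $i^{k}$, so I would double-check by specializing to $n=2$ to recover the classical addition formulae $\cos(\alpha+\beta)=\cos\alpha\cos\beta-\sin\alpha\sin\beta$ and $\sin(\alpha+\beta)=\sin\alpha\cos\beta+\cos\alpha\sin\beta$, which correspond precisely to $e=0$ and $e=1$ (resp.\ $e=0$) with the stated sign convention.
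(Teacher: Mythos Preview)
Your argument is correct and is exactly the approach the paper indicates: expand $e^{i(\theta_1+\cdots+\theta_n)}=\prod_j(\cos\theta_j+i\sin\theta_j)$ and separate real and imaginary parts, with the sign and parity bookkeeping you carry out. The paper states this derivation in one sentence and omits the details you have filled in.
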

If $x_i=\cos \theta_i$ for each $i\in[n]$ then $y_i= \sin \theta_i$ 
satisfies the equation $x_i^2 + y_i^2 = 1$ and hence 
$y_i= \pm \sqrt{1-x_i^2}$.  The geometric properties of the coin 
graph determine that for the interior angles $\theta_i$ we have 
$0\leq \theta_i < \pi$ 
and so $\sin \theta_i \geq 0$. Hence we have $y_i= \sqrt{1-x_i^2}$
and so both $\cos\theta_i$ and $\sin\theta_i$ are in terms of $x_i$.
\begin{definition}
\label{def:ECES}
We define the algebraic expressions $\EC_n$ and $\ES_n$ 
by taking the sine or cosine of (\ref{eqn:2pi}) and expanding
using Lemma~\ref{lmm:sum}.
\[
\EC_n(x_1, \ldots, x_n)	= \cos \left( \sum_{i=1}^n \theta_i \right), \ \ 
\ES_n(x_1, \ldots, x_n)	= \sin \left( \sum_{i=1}^n \theta_i \right).
\]
\end{definition}
{\sc Example:}
For $n=1$ we have
\[
\EC_1(x_1)= x_1, \ \ \ES_1(x_1) =  y_1 = \sqrt{1-x_1^2},
\]
and for $n=2$ we have 
\[
\EC_2(x_1, x_2)	= x_1 x_2 - \sqrt{1-x_1^2} \sqrt{1-x_2^2},
\ \ 
\ES_2(x_1, x_2)	= x_2\sqrt{1-x_1^2}  + x_1 \sqrt{1-x_2^2}.
\]
Directly by the addition formulae for cosine and sine we have
the following recursive property of these expressions.
\begin{lemma}
\label{lmm:ECES-rec}
For each $i \in \{1, \ldots, n\}$ we have
\begin{eqnarray*}
\EC_n(x_1, \ldots, x_n)	& = &
x_i \EC_{n-1}(\widehat{x_i}) - y_i \ES_{n-1}(\widehat{x_i}), \\ 
\ES_n(x_1, \ldots, x_n)	& = &
y_i \EC_{n-1}(\widehat{x_i}) + x_i \ES_{n-1}(\widehat{x_i}).
\end{eqnarray*}
where $y_i= \sqrt{1-x_i^2}$ and 
$\left(\widehat{x_i}\right) = 
\left(x_1,\ldots, x_{i-1}, x_{i+1}, \ldots, x_n\right)$.  
In particular for $i=1$ we have
\begin{eqnarray*}
\EC_n(x_1, \ldots, x_n)	& = & 
x_1 \EC_{n-1}(\widehat{x_1}) - y_1 \ES_{n-1}(\widehat{x_1}), \\
\ES_n(x_1, \ldots, x_n)	& = &
y_1 \EC_{n-1}(\widehat{x_1}) + x_1 \ES_{n-1}(\widehat{x_1}).
\end{eqnarray*}
\end{lemma}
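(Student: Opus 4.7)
The plan is to derive both recursions as a direct application of the two-term addition formulas for $\cos$ and $\sin$, with the single observation that the quantities $\EC_n$ and $\ES_n$ already encode the trigonometric sum. First I would recall the setup established just before Definition~\ref{def:ECES}: if $x_j = \cos\theta_j$ with $0 \le \theta_j < \pi$, then $\sin\theta_j \ge 0$, so the substitution $y_j = \sqrt{1-x_j^2}$ agrees with $\sin\theta_j$ as a real quantity. Hence $\EC_n(x_1,\ldots,x_n)$ and $\ES_n(x_1,\ldots,x_n)$ are genuinely $\cos\bigl(\sum_{j=1}^n\theta_j\bigr)$ and $\sin\bigl(\sum_{j=1}^n\theta_j\bigr)$ after the substitution.

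Next, for any fixed $i \in \{1,\ldots,n\}$, I would split the sum as $\sum_{j=1}^n \theta_j = \theta_i + \Theta_i$, where $\Theta_i := \sum_{j \ne i} \theta_j$. The classical addition formulas give
\[
\cos(\theta_i + \Theta_i) = \cos\theta_i\cos\Theta_i - \sin\theta_i\sin\Theta_i,
\qquad
\sin(\theta_i + \Theta_i) = \sin\theta_i\cos\Theta_i + \cos\theta_i\sin\Theta_i.
\]
Applying Definition~\ref{def:ECES} to the $(n-1)$-tuple $\widehat{x_i}$ identifies $\cos\Theta_i = \EC_{n-1}(\widehat{x_i})$ and $\sin\Theta_i = \ES_{n-1}(\widehat{x_i})$, while $\cos\theta_i = x_i$ and $\sin\theta_i = y_i$. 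Substituting yields the two asserted equations. The statement for $i=1$ is the special case $i=1$, so nothing extra is required.

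The only piece of bookkeeping—hardly a real obstacle—is making sure we are entitled to treat $\EC_n$ and $\ES_n$ as the genuine trigonometric sum rather than only as the formal $2^{n-1}$-term expansions of Lemma~\ref{lmm:sum}. This is immediate because the expansions in Lemma~\ref{lmm:sum} are themselves derived by iterating the very same two-term addition formulas (equivalently, by reading off real and imaginary parts of $\prod_j e^{i\theta_j}$), so the formal identity agrees with the trigonometric one on the common substitution. Once this is noted, the proof reduces to a single line of the addition formulas, and both recursions fall out simultaneously.
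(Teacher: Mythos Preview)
Your proposal is correct and is essentially the same as the paper's approach: the paper states that the recursion follows ``directly by the addition formulae for cosine and sine,'' which is exactly what you carry out.
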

Note that the expressions $\EC_n$ and $\ES_n$ 
are symmetric in $x_1,\dots,x_n$. As informally demonstrated here below,
these will yield symmetric
polynomials (see~\cite[p.~252]{Hungerford} for more information and general 
algebraic properties of symmetric polynomials.)

For a fixed $n\in\nats$ (\ref{eqn:2pi}) yields 
the algebraic equation $\EC_n = 1$. By repeatedly isolating one
term that contains a $y_i$ and squaring, then rearranging the terms,
we obtain a polynomial equation $C_n = 0$. For example for 
$n = 1,2,3,4$ we obtain
\begin{eqnarray*}
C_1(x_1)	& = & x_1 -1\\
C_2(x_1, x_2)   & = & (x_1-x_2)^2\\
C_3(x_1, x_2, x_3)  & = & (x_1^2+x_2^2+x_3^2 -2x_1x_2x_3-1)^2\\
C_4(x_1, x_2, x_3, x_4) & = & (x_1^4+x_2^4+x_3^4+x_4^4 \\
& & - 
2(x_1^2x_2^2+x_2^2x_3^2 + x_3^2x_4^2 + x_1^2x_4^2 + x_1^2x_3^2 + x_2^2x_4^2) \\
& & + 4(x_1^2x_2^2x_3^2 + x_2^2x_3^2x_4^2 + x_1^2x_3^2x_4^2 + x_1^2x_2^2x_4^2) \\
& & + 4x_1x_2x_3x_4(2-x_1^2-x_2^2-x_3^2-x_4^2))^2.
\end{eqnarray*}
Note that for $n\geq 2$ it appears that $C_n$ is always
a square polynomial, something we will prove 
in Section~\ref{sec:irr-poly}.

By (\ref{eqn:coslaw}) we have for each $i\in[n]$
\[
x_i = \frac{(r+r_i)^2 + (r+r_{i+1})^2 
- (r_i + r_{i+1})^2}{2(r + r_i)(r+r_{i+1})}.
\]
Substituting these $x_i$ into the polynomial equation $C_n=0$ 
yields a rational equation in $r_1, \ldots, r_n$ and $r$.  
This rational equation can then be transformed into a polynomial
equation $g=0$ where $g \in \mathbb{Q}[r_1, \ldots, r_k]$ as 
in (\ref{eqn:g}). That the polynomial will be $D_n$-symmetric is 
clear from geometry: it does not matter 
which angle we label $\theta_1$ (rotation) or whether we do our 
numbering clockwise or counter-clockwise (reflection.)

{\sc Example:}
For $n=3$ we have the equation 
$f=C_3= (x_1^2+x_2^2+x_3^2 -2x_1x_2x_3-1)^2 = 0$ and by substitution
of the radii into the equation obtain
\begin{eqnarray*}
C_3	
& = & \left(\left(\frac{(r+r_1)^2 + (r+r_{2})^2 - 
(r_1 + r_{2})^2}{2(r + r_1)(r+r_{2})}\right)^2 + 
\left(\frac{(r+r_2)^2 + (r+r_{3})^2 - 
(r_2 + r_{3})^2}{2(r + r_2)(r+r_{3})}\right)^2 \right.\nonumber\\
&   & + \left.\nonumber \left(\frac{(r+r_3)^2 + 
(r+r_{1})^2 - (r_3 + r_{1})^2}{2(r + r_3)(r+r_{1})}\right)^2 - 
2\left(\frac{(r+r_1)^2 + (r+r_{2})^2 - 
(r_1 + r_{2})^2}{2(r + r_1)(r+r_{2})}\right) \right.\nonumber\\
& & \left.\nonumber \left(\frac{(r+r_2)^2 + (r+r_{3})^2 - 
(r_2 + r_{3})^2}{2(r + r_2)(r+r_{3})}\right)
\left(\frac{(r+r_3)^2 + (r+r_{1})^2 - 
(r_3 + r_{1})^2}{2(r + r_3)(r+r_{1})}\right) 
- 1\right)^2\\
& = & \frac{16}{(r+r_1)^4 (r+r_2)^4 (r+r_3)^4}
\left(-2 r_1^2 r_2 r_3 r^2+r_1^2  r_3^2r^2+r_2^2 r_3^2 r^2+
r_1^2 r_2^2 r^2-2  r_1 r_2 r_3^2r^2\right.\nonumber\\
&   & - \left.\nonumber 2 r_1 r_2^2 r_3 r^2-
2 r_1^2 r_2^2 r_3 r-2 r_1 r_2^2 r_3^2 r-2 r_1^2 r_2 r_3^2 r+
r_1^2 r_2^2 r_3^2\right)^2\\
	& = & 0.
\end{eqnarray*}
Hence, our polynomial $g$ is then given by
\begin{eqnarray*}
g	
& = & 16\left(-2 r_1^2 r_2 r_3 r^2+r_1^2 r_3^2 r^2 +r_2^2 r_3^2 r^2+
r_1^2 r_2^2 r^2-2 r_1 r_2 r_3^2 r^2\right.\\
&   & \left.- 2 r_1 r_2^2 r_3 r^2-
2 r_1^2 r_2^2 r_3 r-2  r_1 r_2^2r_3^2 r-2 r_1^2 r_2 r_3^2 r+
r_1^2 r_2^2 r_3^2\right)^2.\\
\end{eqnarray*}
We now write
\[
r^{12} g\left(\frac{r_1}{r}, \frac{r_2}{r}, \frac{r_3}{r}\right)	
= r^8 g_8 +  r^6g_6 +  r^4g_4  - r^2g_2 +g_0 \in\mathbb{Q}[r_1, r_2, r_3][r],
\]
where 
\begin{eqnarray*}
g_8 & = & 
16(r_2^4  r_3^4 +r_1^4  r_2^4+r_1^4  r_3^4+
4 r_1^3  r_2^3  r_3^2-4  r_1^4  r_2  r_3^3 + 
4  r_1^2  r_2^3  r_3^3-4  r_1^4  r_2^3  r_3 
 -  4  r_1^3  r_2^4  r_3\\
 &   & 
+ 6  r_1^2 r_2^2 r_3^4  -4  r_1^3  r_2 r_3^4 - 4  r_1r_2^3  r_3^4 -4  
r_1r_2^4  r_3^3 +6  r_1^4  r_2^2  r_3^2
 +  6 r_1^2 r_2^4  r_3^2  +4  r_1^3  r_2^2  r_3^3),\\
g_6 & = & 
64(r_1^3 r_2^2 r_3^4  - r_1^4   r_2 r_3^4+6 r_1^3  r_2^3  r_3^3+ 
r_1^4  r_2^2  r_3^3+r_1^4  r_2^3  r_3^2
 +  r_1^3  r_2^4  r_3^2+r_1^2 r_2^3  r_3^4  + r_1^2r_2^4  r_3^3\\  
&  & - r_1 r_2^4  r_3^4  -r_1^4  r_2^4  r_3),\\
g_4 & = & 
32(3  r_1^4 r_2^2 r_3^4   +2  r_1^4  r_2^3  r_3^3+2  r_1^3  r_2^3  r_3^4 +
3  r_1^4  r_2^4  r_3^2+3 r_1^2 r_2^4  r_3^4
 +  2r_1^3 r_2^4   r_3^3),\\
g_2 & = & 
64(r_1^3 r_2^4  r_3^4-r_1^4  r_2^3  r_3^4- r_1^4  r_2^4  r_3^3),\\
g_0 & = & 16  r_1^4  r_2^4  r_3^4,
\end{eqnarray*}
and each of these $g_i \in \mathbb{Q}[r_1, r_2, r_3]$ is a $D_3$-symmetric 
polynomial.

\vspace{3 mm}

In general, for the terms with degree of $\delta\in\{0,\ldots,d\}$,
then $r^d$ will 
cancel out  all the denominators and we will be left with a term
$r^{d-\delta}g_{d-\delta}$ where $g_{d-\delta}$ is an 
element of $\mathbb{Q}[r_1, ..., r_n]$. That $g_{d-\delta}$ will be 
$D_n$-symmetric follows from the $D_n$-symmetry of $g$ 
and hence, viewing $g$ as a polynomial in $r$ alone, each coefficient
for each power of $r$ is also $D_n$-symmetric.

\section{The polynomial of the $n$-petal flower and its irreducibility}
\label{sec:irr-poly}

This section forms the main contribution and results of the paper.
We will show that for each $n \geq 2$ we have $C_n = P_n^2$, 
where $P_n$ is an irreducible polynomial for $n \geq 2$, 
and $P_n$ is symmetric for $n \geq 3$. To proceed we need some
preliminary definitions and results.
\begin{definition}
\label{def:galois}
For $n\in\nats$ let $G^*_n$ be the Galois group of automorphisms
on $\mathbb{Q}(x_1, \ldots, x_n, y_1, \ldots, y_n)$ that fixes the field 
$\mathbb{Q}(x_1, \ldots, x_n)$. Also, let $G_n$ be the Galois group of automorphisms on 
$\mathbb{Q}(x_1, \ldots, x_n, y_iy_j : i < j)$ that fixes the field 
$\mathbb{Q}(x_1, \ldots, x_n)$. That is,
\begin{eqnarray*}
G^*_n & = & \Gal(\mathbb{Q}(x_1, x_2, \ldots, x_n, y_1, 
\ldots, y_n)/\mathbb{Q}(x_1, \ldots, x_n)),\\
G_n   & = & \Gal(\mathbb{Q}(x_1, x_2, \ldots, 
x_n, y_iy_j : i < j)/\mathbb{Q}(x_1, \ldots, x_n)),
\end{eqnarray*}
where $x_1,\ldots,x_n$ are algebraically independent indeterminates and 
$y_i = \sqrt{1-x_i^2}$ for each $i$, that is $y_i$ is one root of
$X^2 + x_i^2 - 1 = 0 \in \mathbb{Q}(x_1, \ldots, x_n)[X]$.
\end{definition}
\begin{lemma}
\label{lmm:ZnZn-1}
For $n\geq 1$ we have $G_n^* \cong \mathbb{Z}_2^n$ and 
$G_n \cong \mathbb{Z}_2^{n-1}$.
\end{lemma}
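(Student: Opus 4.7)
The plan is to build $\mathbb{Q}(x_1,\ldots,x_n,y_1,\ldots,y_n)/\mathbb{Q}(x_1,\ldots,x_n)$ as a tower of $n$ quadratic extensions so that $G_n^*$ can be computed directly, and then identify $G_n$ as a quotient of $G_n^*$ via the Galois correspondence.

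Set $K=\mathbb{Q}(x_1,\ldots,x_n)$ and $K_i=K(y_1,\ldots,y_i)$ for $0\leq i\leq n$. The crux is to show that each step $K_{i-1}\subsetneq K_i$ is a proper quadratic extension, i.e.\ that $1-x_i^2$ is not a square in $K_{i-1}$. Since every $y_j=\sqrt{1-x_j^2}$ with $j\neq i$ is algebraic over $\mathbb{Q}(x_j)$, the subfield $F:=\mathbb{Q}(x_1,\ldots,x_{i-1},x_{i+1},\ldots,x_n,y_1,\ldots,y_{i-1})\subseteq K_{i-1}$ is algebraic over $\mathbb{Q}(x_1,\ldots,\widehat{x_i},\ldots,x_n)$, so $x_i$ remains transcendental over $F$ and $K_{i-1}=F(x_i)$. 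In the UFD $F[x_i]$ we have $1-x_i^2=-(x_i-1)(x_i+1)$ with two distinct simple roots, so if $1-x_i^2=(p/q)^2$ with $p,q\in F[x_i]$, then $-q^2(x_i-1)(x_i+1)=p^2$ contradicts parity of the $(x_i-1)$-adic valuation. Hence $[K_n:K]=2^n$. Each $K_i/K_{i-1}$ is Galois with group $\mathbb{Z}_2$ generated by $y_i\mapsto -y_i$, and $K_n/K$ is the splitting field of $\prod_{i=1}^n(X^2-(1-x_i^2))$ over $K$, hence Galois. For every $S\subseteq[n]$ the assignment ``negate $y_i$ for $i\in S$, fix everything else'' then extends to a well-defined $K$-automorphism of $K_n$; these $2^n$ automorphisms are pairwise distinct and compose componentwise, yielding $G_n^*\cong\mathbb{Z}_2^n$.

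For $G_n$, observe that $M:=K(y_iy_j:i<j)$ is an intermediate field of $K_n/K$. Identifying $\sigma\in G_n^*$ with $(\epsilon_1,\ldots,\epsilon_n)\in\{\pm1\}^n$ via $\sigma(y_k)=\epsilon_ky_k$, we have $\sigma(y_iy_j)=\epsilon_i\epsilon_j\,y_iy_j$, so the subgroup $H\leq G_n^*$ fixing $M$ pointwise consists of tuples with $\epsilon_i\epsilon_j=1$ for all $i<j$, which forces all $\epsilon_i$ equal. Hence $H=\{(1,\ldots,1),(-1,\ldots,-1)\}\cong\mathbb{Z}_2$, and since $G_n^*$ is abelian $H$ is normal, so $M/K$ is Galois and $G_n\cong G_n^*/H\cong\mathbb{Z}_2^{n-1}$. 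The main obstacle is really the very first step: verifying that the $n$ quadratic extensions are genuinely independent so that the tower does not collapse; the transcendence of $x_i$ over $F$ combined with the simple-zero/multiplicity argument at $x_i=1$ resolves it cleanly.
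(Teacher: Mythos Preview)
Your argument is correct. For $G_n^*$ you follow essentially the paper's approach---a tower of $n$ quadratic extensions---but you actually supply the irreducibility verification (via the $(x_i-1)$-adic valuation in $F[x_i]$) that the paper merely asserts. For $G_n$ the two proofs genuinely diverge: the paper first rewrites every $y_iy_j$ in terms of the $n-1$ consecutive products $y_1y_2,\ldots,y_{n-1}y_n$ and then climbs a second tower of $n-1$ quadratic extensions, checking irreducibility at each stage; you instead realise $M=K(y_iy_j:i<j)$ as the fixed field of the diagonal copy of $\mathbb{Z}_2$ inside $G_n^*$ and read off $G_n\cong G_n^*/\mathbb{Z}_2$ from the Galois correspondence. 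Your route is cleaner and sidesteps a second batch of irreducibility checks; the paper's route, however, directly produces the explicit generating set $\{y_iy_{i+1}\}$ for $M$ and hence the generators $\sigma_i:y_iy_{i+1}\mapsto -y_iy_{i+1}$ of $G_n$ that are used immediately afterward and throughout the rest of Section~\ref{sec:irr-poly}.
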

\begin{proof}
For $G_n^*$, each $y_i$ is the root of an irreducible quadratic polynomial 
$X^2 - (1-x_i^2)$ from the ring 
$\mathbb{Q}(x_1, \ldots, x_n, y_1, \ldots, y_{i-1})[X]$,
which is the minimum polynomial of $y_i$ 
%over $\mathbb{Q}(x_1, \ldots, x_n, y_1, \ldots, y_{i-1})$ 
for each $i$. Hence we have $G_n^* \cong \mathbb{Z}_2^n$.

For $G_n$, each $y_iy_j$ with $i < j$ is also the root of an irreducible 
quadratic polynomial 
$X^2 - (1-x_i^2)(1-x_j^2) \in \mathbb{Q}(x_1, \ldots, x_n)[X]$.  
However, every element of $\mathbb{Q}(x_1, x_2, \ldots, x_n, y_iy_j : i < j)$ 
can be written as a rational function in terms of only elements of the form 
$y_i y_{i+1}$ as follows:
\[
y_i y_j	= \frac{(y_i y_{i+1}) (y_{i+1}y_{i+2}) 
\cdots (y_{j-1} y_j)}{y_{i+1}^2 \cdots y_{j-1}^2} =  
\frac{(y_i y_{i+1}) (y_{i+1}y_{i+2}) \cdots 
(y_{j-1} y_j)}{(1-x_{i+1}^2) \cdots (1-x_{j-1}^2)}.
\]
So we have that 
\[
\mathbb{Q}(x_1, x_2, \ldots, x_n, y_iy_j : i < j) = 
\mathbb{Q}(x_1, x_2, \ldots, x_n, y_iy_{i+1}: 1 \leq i <n ).
\]
Each term $y_iy_{i+1}$ is a root of an irreducible 
quadratic polynomial $X^2 - (1-x_i^2)(1-x_{i+1}^2)$ from the
ring $\mathbb{Q}(x_1, \ldots, x_n, y_1y_2, \ldots, y_{i-1}y_i)[X]$, 
which is the minimal polynomial of $y_iy_{i+1}$ 
%over $\mathbb{Q}(x_1, \ldots, x_n, y_1 y_2, \ldots, y_{i-1}y_i)$ 
for each $i \in \{1, \ldots, n-1\}$.  Therefore we have that 
$G_n \cong \mathbb{Z}_2^{n-1}$.
\end{proof}
\begin{lemma}
\label{lmm:neg}
For $n \in \mathbb{N}$, the group $G_n \cong \mathbb{Z}_2^{n-1}$ 
can be presented as
\[
G_n = \left\langle \sigma_1, \ldots, \sigma_{n-1} : 
\sigma_i^2 = e, \sigma_i \sigma_j = \sigma_j \sigma_i \right\rangle,
\]
where each $\sigma_i$ is an automorphism fixing 
$\mathbb{Q}(x_1, \ldots x_n)$ and
\[
\sigma_i(y_j y_{j+1}) =  \left\{ \begin{array}{lll}
                                   -y_j y_{j+1}    & \mbox{if $i=j$} \\
                                    y_j y_{j+1}    & \mbox{if $i \neq j$}.
                                 \end{array}
                         \right.
\]
\end{lemma}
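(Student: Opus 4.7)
The plan is to leverage Lemma~\ref{lmm:ZnZn-1} and the explicit tower of quadratic extensions used in its proof. Write $K=\mathbb{Q}(x_1,\ldots,x_n)$ and $F_n=\mathbb{Q}(x_1,\ldots,x_n,y_iy_j:i<j)$. From the previous lemma we have $F_n=K(y_1y_2,y_2y_3,\ldots,y_{n-1}y_n)$, each $y_iy_{i+1}$ is a root of the irreducible quadratic $X^2-(1-x_i^2)(1-x_{i+1}^2)$ over the subfield generated by $K$ and the earlier consecutive products, and $[F_n:K]=|G_n|=2^{n-1}$.

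First I would observe that any $\sigma\in G_n$ fixes $K$ and therefore must permute the two roots $\pm y_iy_{i+1}$ of each such irreducible quadratic. Hence $\sigma(y_iy_{i+1})=\epsilon_i(\sigma)\,y_iy_{i+1}$ for some signs $\epsilon_i(\sigma)\in\{\pm 1\}$, and the assignment $\Phi\colon G_n\to\{\pm 1\}^{n-1}$ given by $\sigma\mapsto(\epsilon_1(\sigma),\ldots,\epsilon_{n-1}(\sigma))$ is a group homomorphism into $\{\pm 1\}^{n-1}\cong\mathbb{Z}_2^{n-1}$ (with componentwise multiplication). Since $K$ together with the products $y_1y_2,\ldots,y_{n-1}y_n$ generate $F_n$, any automorphism that fixes all of them is the identity; thus $\Phi$ is injective. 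Because $|G_n|=2^{n-1}=|\{\pm 1\}^{n-1}|$, this injection is in fact an isomorphism.

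Next I would define $\sigma_i\in G_n$ to be the unique preimage under $\Phi^{-1}$ of the vector whose $i$-th coordinate is $-1$ and whose other coordinates are $+1$. By construction this $\sigma_i$ acts on the $y_jy_{j+1}$'s exactly as claimed in the statement. The relations $\sigma_i^2=e$ and $\sigma_i\sigma_j=\sigma_j\sigma_i$ are inherited from $\{\pm 1\}^{n-1}$ via $\Phi$, and $\sigma_1,\ldots,\sigma_{n-1}$ generate $G_n$ because the corresponding standard basis vectors generate $\{\pm 1\}^{n-1}$.

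The only step that really requires care is the surjectivity of $\Phi$, i.e.~the fact that every sign pattern is actually realized by an element of $G_n$. The cardinality argument above settles this cleanly, but if a more concrete construction is desired, I would build $\sigma_i$ step by step up the tower $K\subset K(y_1y_2)\subset K(y_1y_2,y_2y_3)\subset\cdots\subset F_n$, specifying at each stage whether the newly adjoined $y_jy_{j+1}$ is sent to itself or to its negative. Standard Galois theory applied to each separable quadratic extension guarantees that these independent sign choices extend to a well-defined field automorphism of $F_n$ fixing $K$, which is the only nontrivial point in the whole argument.
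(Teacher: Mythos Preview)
Your proof is correct and follows essentially the same approach as the paper: both argue that any $\sigma\in G_n$ must send each $y_iy_{i+1}$ to $\pm y_iy_{i+1}$, and then take the $\sigma_i$ to be the sign-flip on the $i$-th product alone. Your version is considerably more detailed---making the homomorphism $\Phi$ explicit and arguing injectivity and surjectivity carefully---whereas the paper's proof is a terse two-sentence sketch relying on Lemma~\ref{lmm:ZnZn-1}.
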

\begin{proof}
Since $(y_i y_{i+1})^2 = (1-x_i^2) (1-x_{i+1}^2)$ and the Galois group 
$G_n$ is fixing the $x_i$, the only possible automorphisms are 
$\sigma(y_i y_{i+1}) = -y_i y_{i+1}$ and $\sigma(y_i y_{i+1}) = y_i y_{i+1}$.  
We can then generate the group as in the statement of the theorem with 
$n-1$ generators $\sigma_i$.
\end{proof}
\begin{corollary}
\label{cor:sigma_signs}
For every $\sigma \in G_n$, let $s_{\sigma; j} \in \{-1, 1\}$ be such 
that $\sigma(y_j y_{j+1})= s_{\sigma; j} y_j y_{j+1}$.  Then for every 
$i < j$ we have $\sigma(y_i y_{j}) = 
s_{\sigma; i} s_{\sigma; i+1} \cdots s_{\sigma; j} y_i y_{j}$.
In particular, if $i <n$ then $\sigma_{n-1}(y_i y_n) = -y_i y_n$ 
and if $i >1$ then $\sigma_{1}(y_1 y_i) = -y_1 y_i$.
\end{corollary}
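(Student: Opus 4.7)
The plan is to telescope, using the factorization established in the proof of Lemma~\ref{lmm:ZnZn-1}. That lemma already records the identity
\[
y_i y_j \;=\; \frac{(y_i y_{i+1})(y_{i+1} y_{i+2}) \cdots (y_{j-1} y_j)}{(1-x_{i+1}^2)\cdots(1-x_{j-1}^2)},
\]
valid whenever $i<j$. Since every $\sigma \in G_n$ fixes $\mathbb{Q}(x_1,\ldots,x_n)$, the denominator is pointwise invariant, and the only contribution to the action of $\sigma$ comes from the consecutive factors in the numerator.

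Concretely, I would apply $\sigma$ to both sides, use that $\sigma$ is a field automorphism so it distributes over products and quotients, and then substitute the defining relation $\sigma(y_k y_{k+1}) = s_{\sigma;k}\,y_k y_{k+1}$ for each $k = i, i+1, \ldots, j-1$. Pulling the signs out of the product and cancelling the (fixed) denominator against the telescoping numerator gives the claimed formula for $\sigma(y_i y_j)$ up to the product of signs $s_{\sigma;i}\,s_{\sigma;i+1}\cdots s_{\sigma;j-1}$.

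The two special cases follow immediately from Lemma~\ref{lmm:neg}. For $\sigma_{n-1}$, every $s_{\sigma_{n-1};k}$ equals $+1$ except at $k=n-1$, so for any $i<n$ the product over $k\in\{i,\ldots,n-1\}$ contributes exactly one factor of $-1$, giving $\sigma_{n-1}(y_i y_n) = -y_i y_n$. Symmetrically, $s_{\sigma_1;k}$ equals $-1$ only at $k=1$, so for $i>1$ the product over $k\in\{1,\ldots,i-1\}$ contributes exactly one factor of $-1$, giving $\sigma_1(y_1 y_i) = -y_1 y_i$.

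The proof is essentially bookkeeping and there is no substantive obstacle; the only things to watch are that the index range in the telescoping identity is exactly $\{i,i+1,\ldots,j-1\}$ and that the denominator really does sit inside the fixed field $\mathbb{Q}(x_1,\ldots,x_n)$ so that $\sigma$ leaves it alone.
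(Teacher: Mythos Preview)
Your argument is correct and is precisely the intended one: the paper states this as a corollary with no separate proof, so the telescoping identity from Lemma~\ref{lmm:ZnZn-1} together with the presentation in Lemma~\ref{lmm:neg} is exactly what is being invoked. You have also correctly observed that the product of signs should run over $k\in\{i,i+1,\ldots,j-1\}$ rather than through $j$ as printed in the statement; indeed $s_{\sigma;n}$ is not even defined, so the upper index $j$ in the displayed formula is an off-by-one slip in the paper, and your version is the right one.
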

We are now able to give a precise definition of $C_n$ from
Section~\ref{sec:setup} for each $n \in \mathbb{N}$.
\begin{definition}
\label{def:cn}
For $n\in\nats$, define the polynomial $C_n\in\mathbb{Q}[x_1,\ldots,x_n]$ 
by
\[
C_n(x_1, \ldots, x_n) = \prod_{\sigma \in G_n} (\sigma(\EC_n) -1).  
\]
\end{definition}
From Definition~\ref{def:cn} we see that $C_n$ is indeed symmetric in 
$x_1,\ldots,x_n$.

{\sc Example:} For $n=2$ we have
$G_2 = \left\langle \sigma \right\rangle\cong \mathbb{Z}_2$ 
where $\sigma(y_1 y_2)= -y_1 y_2$ and $\sigma^2=e$, and hence
\[
\prod_{\sigma \in G_2} (\sigma(\EC_2) -1) =  
(x_1x_2 - y_1y_2 -1)(x_1x_2-\sigma(y_1y_2) - 1) = 
(x_1-x_2)^2 = C_2(x_1, x_2).
\]
For $n=3$, we have $G_3= \left\langle \sigma_1, \sigma_2\right\rangle$ 
where $\sigma_1(y_1 y_2)= -y_1 y_2$ and $\sigma_2(y_2 y_3)= -y_2 y_3$
and hence 
\begin{eqnarray*}
\prod_{\sigma \in G_3} (\sigma(\EC_3) -1) 
& = & (x_1 x_2 x_3 - x_1 y_2 y_3 - y_1 x_2 y_3 - y_1 y_2 x_3-1)\\
& & \cdot(x_1 x_2 x_3 - x_1 y_2 y_3 - \sigma_1(y_1 x_2 y_3) 
- \sigma_1(y_1 y_2 x_3)-1)\\
& & \cdot(x_1 x_2 x_3 - \sigma_2(x_1 y_2 y_3) 
- \sigma_2(y_1 x_2 y_3) - y_1 y_2 x_3-1)\\
& & 
\cdot(x_1 x_2 x_3 - \sigma_1\sigma_2(x_1 y_2 y_3) - 
\sigma_1\sigma_2(y_1 x_2 y_3) - \sigma_1\sigma_2(y_1 y_2 x_3)-1)\\
& = & 
(x_1 x_2 x_3 - x_1 y_2 y_3 - y_1 x_2 y_3 - y_1 y_2 x_3-1)\\
& & \cdot(x_1 x_2 x_3 - x_1 y_2 y_3 + y_1 x_2 y_3 + y_1 y_2 x_3-1)\\
& & \cdot(x_1 x_2 x_3 + x_1 y_2 y_3 + y_1 x_2 y_3 - y_1 y_2 x_3-1)\\
& & \cdot(x_1 x_2 x_3 - x_1 y_2 y_3 + y_1 x_2 y_3 - y_1 y_2 x_3-1)\\
& = & 
(x_1^2+x_2^2+x_3^2 - 2x_1 x_2 x_3 -1)^2.
\end{eqnarray*}
By Lemma~\ref{lmm:sum}, each of the $2^{n-2}$ terms of $\ES_{n-1}$ in 
terms of $x_1, \ldots, x_{n-1}, y_1, \ldots, y_{n-1}$ contains positive 
odd factors of $y_i$ for $i \leq n-1$.  Hence $\sigma_{n-1} \in G_n$ 
fixes $\mathbb{Q}(x_1, \ldots, x_n, y_1y_2, \ldots, y_{n-2}y_{n-1})$ and 
$\sigma_{n-1}(y_{n-1}y_n)= -y_{n-1}y_n$. Noting this we 
then have by Corollary~\ref{cor:sigma_signs} the following:
\begin{claim}
\label{clm:union}
For $n\geq 2$ we have 
$G_n = G_{n-1} \cup G_{n-1} \sigma_{n-1} = G_{n-1} \cup \sigma_{n-1} G_{n-1}$
where $\sigma_{n-1}(y_n \ES_{n-1}) = -y_n \ES_{n-1}$.
\end{claim}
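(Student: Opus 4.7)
My plan is to establish the two assertions of the claim---the coset decomposition and the sign identity---using the structural results already in hand. For the coset decomposition, I would first identify $G_{n-1}$ with the subgroup $H = \langle \sigma_1, \ldots, \sigma_{n-2}\rangle$ of $G_n$. By Corollary~\ref{cor:sigma_signs}, each generator $\sigma_i$ with $i \leq n-2$ fixes every $y_j y_n$ (in particular $y_{n-1}y_n$), so restriction to $\mathbb{Q}(x_1,\ldots,x_{n-1}, y_j y_k : j<k\leq n-1)$ is a well-defined homomorphism $H \to G_{n-1}$ between groups of equal order $2^{n-2}$ (by Lemma~\ref{lmm:ZnZn-1}), hence an isomorphism. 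Moreover $\sigma_{n-1}(y_{n-1}y_n) = -y_{n-1}y_n$ while every element of $H$ fixes $y_{n-1}y_n$, so $\sigma_{n-1} \notin H$; combined with $[G_n : H] = 2$, this forces $G_n = H \cup \sigma_{n-1}H$. Since $G_n \cong \mathbb{Z}_2^{n-1}$ is abelian by Lemma~\ref{lmm:ZnZn-1}, left and right cosets coincide, giving both expressions in the claim.

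For the sign identity $\sigma_{n-1}(y_n\ES_{n-1}) = -y_n\ES_{n-1}$, I would expand $\ES_{n-1}$ via Lemma~\ref{lmm:sum} as a sum of $2^{n-2}$ monomials, each containing an odd number, say $2e+1$, of $y_i$-factors with $i \leq n-1$. After multiplying by $y_n$, each monomial has $2e+2$ $y$-factors in total; I would pair $y_n$ with one $y_i$ (some $i<n$) and pair the remaining $2e$ factors (all indexed below $n$) among themselves. Corollary~\ref{cor:sigma_signs} then yields $\sigma_{n-1}(y_i y_n) = -y_i y_n$ and $\sigma_{n-1}(y_j y_k) = y_j y_k$ for $j<k<n$, so every monomial acquires exactly one sign flip and the identity follows.

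The main obstacle is the sign computation in the second step; it works cleanly because $\sigma_{n-1}$ flips precisely those products $y_i y_j$ in which $j = n$, and since each term of $y_n\ES_{n-1}$ contains $y_n$ exactly once, the overall sign is $-1$ independent of the pairing chosen. The coset part is then essentially group-theoretic bookkeeping once the embedding of $G_{n-1}$ inside $G_n$ via restriction is made explicit.
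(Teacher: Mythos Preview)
Your argument follows the same route as the paper: identify $G_{n-1}$ with the subgroup $\langle\sigma_1,\ldots,\sigma_{n-2}\rangle\le G_n$ from the presentation in Lemma~\ref{lmm:neg}, note that $\sigma_{n-1}$ lies outside it, and read off the index-$2$ coset decomposition (with left and right cosets agreeing since $G_n$ is abelian); for the sign identity, use Lemma~\ref{lmm:sum} to see that each monomial of $\ES_{n-1}$ carries an odd number of $y_i$ with $i\le n-1$, pair the lone $y_n$ with one of them, and invoke Corollary~\ref{cor:sigma_signs}.

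There is one slip worth correcting. It is \emph{not} true that each $\sigma_i$ with $i\le n-2$ fixes every $y_jy_n$: by Corollary~\ref{cor:sigma_signs}, $\sigma_i(y_jy_n)=-y_jy_n$ whenever $j\le i$ (for instance $\sigma_1(y_1y_n)=-y_1y_n$). This does not damage your argument, since you actually use only two facts: (i) each $\sigma_i$ with $i\le n-2$ fixes $y_{n-1}y_n$ (true, because $n-1>i$), which gives $\sigma_{n-1}\notin H$; and (ii) $H$ preserves the subfield generated over $\mathbb{Q}(x_1,\ldots,x_n)$ by the $y_jy_k$ with $j<k\le n-1$, which holds simply because each such $y_jy_k$ is sent to $\pm y_jy_k$. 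With that adjustment your proof is correct and coincides with the paper's (terser) justification.
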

If $G_n$ is presented as in Lemma~\ref{lmm:neg}, then
$\sigma_{n-1} \in G_n$ fixes 
$\mathbb{Q}(x_1, \ldots, x_n, y_1y_2, \ldots, y_{n-2}y_{n-1})$ and 
$\sigma_{n-1}(y_{n-1}y_n) = - y_{n-1}y_n$. 
\begin{lemma}
\label{lmm:ECnECn-1}
For $n\in\nats$ let $G_n$ be presented as in Lemma~\ref{lmm:neg}.
Then
\[
\left( \EC_n -1\right) \left(\sigma_{n-1}\left(\EC_n \right)-1\right)
= \left(x_n - \EC_{n-1}\right)^2. 
\]
In particular, $\EC_n = 1$ implies $x_n = \EC_{n-1}$.
\end{lemma}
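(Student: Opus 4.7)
The plan is to use the recursion of Lemma~\ref{lmm:ECES-rec} with index $i = n$, giving
\[
\EC_n = x_n \EC_{n-1}(\widehat{x_n}) - y_n \ES_{n-1}(\widehat{x_n}).
\]
Write $C = \EC_{n-1}(\widehat{x_n})$ and $S = \ES_{n-1}(\widehat{x_n})$ for brevity. The first key observation is that $\sigma_{n-1}$ fixes $C$. Indeed, every term of $C$ is a product of some $x_i$'s together with an even-sized product of $y_i$'s with $i \leq n-1$; each such even product can, by the identity in the proof of Lemma~\ref{lmm:ZnZn-1}, be rewritten as a rational function of $x_1,\ldots,x_{n-1}$ and the adjacent pairs $y_1y_2,\,y_2y_3,\,\ldots,\,y_{n-2}y_{n-1}$, all of which lie in the subfield that the text preceding the lemma asserts is fixed by $\sigma_{n-1}$. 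The second observation is Claim~\ref{clm:union}: $\sigma_{n-1}(y_n S) = -y_n S$. Combining these gives
\[
\sigma_{n-1}(\EC_n) = x_n C + y_n S.
\]

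From here the computation is a difference of squares. Writing
\[
\EC_n - 1 = (x_n C - 1) - y_n S, \qquad \sigma_{n-1}(\EC_n) - 1 = (x_n C - 1) + y_n S,
\]
multiplying and using $y_n^2 = 1 - x_n^2$ yields
\[
(\EC_n - 1)(\sigma_{n-1}(\EC_n) - 1) = (x_n C - 1)^2 - (1 - x_n^2) S^2.
\]
Now I would invoke the Pythagorean identity $C^2 + S^2 = 1$, which follows by a simple induction from the recursion in Lemma~\ref{lmm:ECES-rec} (the induction step is $(x_n C - y_n S)^2 + (y_n C + x_n S)^2 = (x_n^2 + y_n^2)(C^2 + S^2)$). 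Substituting $S^2 = 1 - C^2$ and expanding the right side of the display above gives
\[
x_n^2 C^2 - 2 x_n C + 1 - (1 - x_n^2)(1 - C^2) = x_n^2 - 2 x_n C + C^2 = (x_n - C)^2,
\]
which is the claimed identity. The final assertion is then immediate: if $\EC_n = 1$, then $(\EC_n - 1) = 0$, so $(x_n - \EC_{n-1})^2 = 0$ and hence $x_n = \EC_{n-1}$.

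The only nontrivial step is the fixing claim $\sigma_{n-1}(C) = C$, which I expect to be the main obstacle in the sense of requiring care rather than ingenuity; everything else is a textbook difference-of-squares simplification supported by the Pythagorean identity.
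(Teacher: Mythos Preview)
Your proof is correct and follows essentially the same route as the paper: expand $\EC_n = x_n\EC_{n-1} - y_n\ES_{n-1}$, use Claim~\ref{clm:union} to flip the sign of $y_n\ES_{n-1}$ under $\sigma_{n-1}$, form the difference of squares, and simplify via $y_n^2 = 1-x_n^2$ and $\ES_{n-1}^2 = 1-\EC_{n-1}^2$. If anything you are more explicit than the paper on two points it takes for granted---that $\sigma_{n-1}$ fixes $\EC_{n-1}$, and the Pythagorean identity $\EC_{n-1}^2+\ES_{n-1}^2=1$---but the argument is otherwise identical.
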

\begin{proof}
By Claim~\ref{clm:union} we have 
$\sigma_{n-1}(y_n \ES_{n-1}) - -y_n \ES_{n-1}$ and hence
\begin{eqnarray*}
(\EC_n - 1 ) (\sigma_{n-1}(\EC_n)-1)	
& = & 
(x_n \EC_{n-1} - y_n \ES_{n-1}-1)(\sigma_{n-1}(x_n \EC_{n-1} 
- y_n \ES_{n-1})-1)\\
& = & 
(x_n \EC_{n-1} - y_n \ES_{n-1}-1)(x_n \EC_{n-1} + y_n \ES_{n-1}-1)\\
& = & (x_n \EC_{n-1} -1)^2 - y_n^2 \ES_{n-1}^2\\
& = & (x_n \EC_{n-1} -1)^2 - (1-x_n^2) (1-\EC_{n-1}^2)\\
& = & (x_n - \EC_{n-1})^2.
\end{eqnarray*}
\end{proof}
{\sc Remark:}
\label{rem:split}
Note that (\ref{eqn:2pi}) implies directly that 
$\cos(\theta_1 + \cdots + \theta_h) = \cos(\theta_{h+1} + \cdots + \theta_n)$ 
whenever $h+k=n$, and hence the equation 
$\EC_h(x_1, \ldots, x_h) = \EC_k(x_{h+1}, \ldots, x_n)$
which itself implies $x_n = \EC_{n-1}$ by letting $h=n-1$ and $k=1$.
\begin{corollary}
\label{cor:P_n}
For $n \geq 2$ we have $C_n = P_n^2$ where
\[
P_n := \prod_{\sigma \in G_{n-1}} (x_n - \sigma(\EC_{n-1})).
\]
\end{corollary}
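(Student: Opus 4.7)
The plan is to reduce the product defining $C_n$ to a product of squares by pairing up elements of $G_n$ using the decomposition $G_n = G_{n-1} \cup G_{n-1}\sigma_{n-1}$ from Claim~\ref{clm:union}, and then applying Lemma~\ref{lmm:ECnECn-1} to each pair.

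First, I split the product:
\[
C_n = \prod_{\sigma \in G_n} (\sigma(\EC_n)-1) = \prod_{\sigma \in G_{n-1}} (\sigma(\EC_n)-1) \cdot \prod_{\sigma \in G_{n-1}} (\sigma\sigma_{n-1}(\EC_n)-1).
\]
Since $G_n$ is abelian (Lemma~\ref{lmm:neg}) and each $\sigma \in G_n$ is a field automorphism fixing $\mathbb{Q}$, I can rewrite each factor in the second product as $\sigma(\sigma_{n-1}(\EC_n)-1)$, and combine the two products into a single product over $G_{n-1}$:
\[
C_n = \prod_{\sigma \in G_{n-1}} \sigma\bigl((\EC_n -1)(\sigma_{n-1}(\EC_n)-1)\bigr).
\]

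Next I apply Lemma~\ref{lmm:ECnECn-1}, which gives $(\EC_n-1)(\sigma_{n-1}(\EC_n)-1) = (x_n-\EC_{n-1})^2$. Thus
\[
C_n = \prod_{\sigma \in G_{n-1}} \sigma\bigl((x_n-\EC_{n-1})^2\bigr) = \prod_{\sigma \in G_{n-1}} \bigl(\sigma(x_n-\EC_{n-1})\bigr)^2.
\]
Because every $\sigma \in G_{n-1} \subseteq G_n$ fixes the subfield $\mathbb{Q}(x_1,\ldots,x_n)$, in particular it fixes $x_n$, so $\sigma(x_n - \EC_{n-1}) = x_n - \sigma(\EC_{n-1})$. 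Pulling the square outside the product then yields
\[
C_n = \left(\prod_{\sigma \in G_{n-1}} (x_n-\sigma(\EC_{n-1}))\right)^{\!2} = P_n^2,
\]
as required.

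The argument is essentially bookkeeping once the key algebraic identity (Lemma~\ref{lmm:ECnECn-1}) and the coset decomposition (Claim~\ref{clm:union}) are in hand. The only potentially delicate point is justifying the interchange of $\sigma$ with the factor $\sigma_{n-1}(\EC_n)-1$, but this is immediate from commutativity of $G_n$ and the fact that each $\sigma$ is a ring homomorphism fixing $1$; likewise one must note that $\sigma \in G_{n-1}$, viewed as an element of $G_n$, fixes $x_n$ so that the square can be pulled out of the product in the variable $x_n$.
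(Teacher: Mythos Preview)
Your proof is correct and follows essentially the same approach as the paper: both use the coset decomposition $G_n = G_{n-1} \cup G_{n-1}\sigma_{n-1}$ from Claim~\ref{clm:union}, pair the factors, apply Lemma~\ref{lmm:ECnECn-1}, and then pull the square outside using that each $\sigma\in G_{n-1}$ fixes $x_n$. If anything, you are slightly more explicit than the paper in justifying why $\sigma$ commutes past $\sigma_{n-1}$ and why $\sigma(x_n)=x_n$.
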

\begin{proof}
By Lemma~\ref{lmm:ECnECn-1} we obtain:
\begin{eqnarray*}
C_n	
& = & 
\prod_{\sigma \in G_n} (\sigma(\EC_n) - 1)\\
& = & 
\prod_{\sigma \in \sigma_n G_{n-1} \cup G_{n-1}} (\sigma(\EC_n) - 1)\\
& = & \prod_{\sigma \in G_{n-1}} (\sigma(\EC_n) - 1)(\sigma_n\sigma(\EC_n) - 1)\\
& = & \prod_{\sigma \in G_{n-1}} \sigma((\EC_n - 1)(\sigma_n(\EC_n) - 1))\\
& = & \prod_{\sigma \in G_{n-1}} \sigma((x_n-\EC_{n-1})^2)\\
& = & \prod_{\sigma \in G_{n-1}} (x_n-\sigma(\EC_{n-1}))^2\\
& = & P_n^2
\end{eqnarray*}
where $P_n = \prod_{\sigma \in G_{n-1}} (x_n-\sigma(\EC_{n-1}))$.
\end{proof}
By exactly the same token as Claim \ref{clm:union}, Lemma \ref{lmm:ECnECn-1}, 
and Corollary \ref{cor:P_n}, we obtain analogous results by reordering 
the variables $y_1, \ldots, y_n$ in the reverse order: 
$y_n, y_{n-1}, \ldots, y_1$. Namely, if $\sigma_i \in G_n$ is the 
field automorphism of 
$\mathbb{Q}(x_1, \ldots, x_n, y_1y_2, y_2y_3, \ldots, y_{n-1}y_n)$ with 
$\sigma_i(y_i y_{i+1}) = -y_i y_{i+1}$ fixing 
$\mathbb{Q}(x_1, \ldots, x_n)$ and each $y_j y_{j+1}$ 
for $j \neq i$ (as in Lemma \ref{lmm:neg}) then 
we have the following:
\begin{claim}
\label{clm:union'}
If $n\geq 2$ then
$G_n = G'_{n-1} \cup \sigma_1G'_{n-1} = G'_{n-1} \cup G'_{n-1}\sigma_1$
where $G'_{n-1} = \left\langle \sigma_2, \ldots, \sigma_{n-1}\right\rangle$,
a subgroup of $G_n =  \left\langle \sigma_1, \ldots, \sigma_{n-1}\right\rangle$ and 
$\sigma_1(y_1 \ES_{n-1}(x_2, \ldots, x_n)) = -y_1 \ES_{n-1}(x_2, \ldots, x_n)$.
\end{claim}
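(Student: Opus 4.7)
The plan is to establish Claim \ref{clm:union'} by mirroring the argument for Claim \ref{clm:union}, swapping the roles of $\sigma_{n-1}$ and $\sigma_1$ (equivalently, reversing the ordering of the $y_i$). Two assertions must be verified: the coset decomposition $G_n = G'_{n-1} \cup \sigma_1 G'_{n-1} = G'_{n-1} \cup G'_{n-1}\sigma_1$, and the sign-flip identity $\sigma_1(y_1\,\ES_{n-1}(x_2,\dots,x_n)) = -y_1\,\ES_{n-1}(x_2,\dots,x_n)$. For the coset decomposition, Lemma \ref{lmm:ZnZn-1} gives $G_n \cong \mathbb{Z}_2^{n-1}$, so $G_n$ is abelian and left cosets coincide with right cosets. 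The subgroup $G'_{n-1} = \langle \sigma_2,\dots,\sigma_{n-1}\rangle$ has order $2^{n-2}$, hence index $2$ in $G_n$. By Lemma \ref{lmm:neg} each $\sigma_j$ with $j \ge 2$ fixes $y_1 y_2$, so every $\sigma \in G'_{n-1}$ satisfies $\sigma(y_1 y_2) = y_1 y_2$; whereas $\sigma_1(y_1 y_2) = -y_1 y_2$. Hence $\sigma_1 \notin G'_{n-1}$, and the two distinct cosets $G'_{n-1}$ and $\sigma_1 G'_{n-1}$ exhaust $G_n$.

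For the sign-flip identity, I would expand $\ES_{n-1}(x_2,\dots,x_n)$ via Lemma \ref{lmm:sum}: every term is a $\pm$-signed product over the index set $\{2,\dots,n\}$ containing an odd number, say $2e+1$, of $y$-factors. Multiplying by $y_1$ produces an even-length $y$-product $y_1 y_{i_1} y_{i_2} \cdots y_{i_{2e+1}}$ with $2 \le i_1 < \cdots < i_{2e+1} \le n$, which I group as $(y_1 y_{i_1})(y_{i_2} y_{i_3}) \cdots (y_{i_{2e}} y_{i_{2e+1}})$. This places each factor in the field $\mathbb{Q}(x_1,\dots,x_n,\,y_iy_j:i<j)$ on which $G_n$ acts. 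Corollary \ref{cor:sigma_signs} yields $\sigma_1(y_1 y_{i_1}) = -y_1 y_{i_1}$ (since $i_1 > 1$), while each subsequent pair $y_{i_k} y_{i_{k+1}}$ has both indices $\ge 2$ and is therefore fixed by $\sigma_1$ (all relevant sign factors $s_{\sigma_1;k}$ equal $+1$). Hence every term of $y_1\,\ES_{n-1}(x_2,\dots,x_n)$ picks up exactly one minus sign, proving the identity.

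I expect the main subtlety to be the bookkeeping in this last step: one must carry out the pairing of the $y$-factors inside the field on which $G_n$ actually acts, so that the Galois-theoretic computation is legitimate. This is guaranteed by the identity noted in the proof of Lemma \ref{lmm:ZnZn-1}, which expresses every $y_iy_j$ in terms of consecutive generators $y_ky_{k+1}$ and the $x_i$'s.
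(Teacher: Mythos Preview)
Your proposal is correct and follows essentially the same approach as the paper: the paper's proof simply notes (via Lemma~\ref{lmm:sum}) that each term of $\ES_{n-1}(x_2,\ldots,x_n)$ contains an odd number of $y_i$-factors with $i\ge 2$, and then invokes Corollary~\ref{cor:sigma_signs} to conclude. Your write-up spells out the coset decomposition and the pairing of $y$-factors in more detail than the paper does, but the underlying argument is identical.
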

\begin{proof}
By Lemma~\ref{lmm:sum}, each of the $2^{n-2}$ terms of 
$\ES_{n-1}(x_2, \ldots, x_n) =$ $\ES_{n-1}(x_2, \ldots, x_n, y_2, \ldots, y_n)$ 
(by substituting $y_i = \sqrt{1-x_i^2}$ for each $i=2, \ldots, n$) has 
positive odd factors of $y_i$ for $i \geq 2$. Hence the claim follows 
by Corollary \ref{cor:sigma_signs}.
\end{proof}
Similarly to Lemma~\ref{lmm:ECnECn-1} we now have
the following.
\begin{lemma}
\label{lmm:EC2-n}
If $\sigma_1 \in G_n$ is as above then 
\[
(\EC_n -1)(\sigma_1(\EC_n -1)) = (x_1- \EC_{n-1}(x_2, \ldots, x_n))^2.
\]
\end{lemma}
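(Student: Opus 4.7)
The plan is to mirror the proof of Lemma \ref{lmm:ECnECn-1} verbatim, but with the roles of index $1$ and index $n$ swapped. The recursion in Lemma~\ref{lmm:ECES-rec} holds for every $i\in[n]$, so I will apply it with $i=1$ and use Claim~\ref{clm:union'} (the analogue of Claim~\ref{clm:union}) in place of the statement about $\sigma_{n-1}$.

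Step 1: Expand using Lemma~\ref{lmm:ECES-rec} at $i=1$ to get
\[
\EC_n \;=\; x_1\,\EC_{n-1}(x_2,\ldots,x_n)\;-\;y_1\,\ES_{n-1}(x_2,\ldots,x_n).
\]
Step 2: Analyze the action of $\sigma_1$ on this expression. Claim~\ref{clm:union'} directly gives $\sigma_1(y_1 \ES_{n-1}(x_2,\ldots,x_n)) = -y_1\ES_{n-1}(x_2,\ldots,x_n)$. I also need to check that $\sigma_1$ fixes $\EC_{n-1}(x_2,\ldots,x_n)$, and this is where I would quote Corollary~\ref{cor:sigma_signs}: by Lemma~\ref{lmm:sum}, each term of $\EC_{n-1}(x_2,\ldots,x_n)$ has an even number of factors $y_j$ with $j\geq 2$, hence can be written using products $y_iy_j$ with $2\leq i<j\leq n$, and for such pairs Corollary~\ref{cor:sigma_signs} gives $\sigma_1(y_iy_j)=y_iy_j$ since $s_{\sigma_1;k}=1$ for all $k\geq 2$.

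Step 3: With these two facts in hand, the computation becomes a telescoping difference of squares exactly as in Lemma~\ref{lmm:ECnECn-1}:
\begin{eqnarray*}
(\EC_n-1)(\sigma_1(\EC_n)-1)
&=& (x_1\EC_{n-1}(\widehat{x_1})-y_1\ES_{n-1}(\widehat{x_1})-1)\\
&& \cdot(x_1\EC_{n-1}(\widehat{x_1})+y_1\ES_{n-1}(\widehat{x_1})-1)\\
&=& (x_1\EC_{n-1}(\widehat{x_1})-1)^2 - y_1^2\,\ES_{n-1}(\widehat{x_1})^2\\
&=& (x_1\EC_{n-1}(\widehat{x_1})-1)^2 - (1-x_1^2)(1-\EC_{n-1}(\widehat{x_1})^2)\\
&=& (x_1-\EC_{n-1}(\widehat{x_1}))^2,
\end{eqnarray*}
using $y_1^2=1-x_1^2$ and the Pythagorean identity $\EC_{n-1}^2+\ES_{n-1}^2=1$ (which is immediate from $\EC_{n-1}=\cos$ and $\ES_{n-1}=\sin$ of a common angle sum).

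There is no real obstacle here; the only point that requires any thought is verifying in Step 2 that $\sigma_1$ fixes $\EC_{n-1}(x_2,\ldots,x_n)$, which amounts to the parity observation on $y$-factors together with one application of Corollary~\ref{cor:sigma_signs}. Everything else is algebraic bookkeeping parallel to the already-established Lemma~\ref{lmm:ECnECn-1}.
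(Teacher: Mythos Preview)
Your proof is correct and follows essentially the same approach as the paper: both expand $\EC_n$ via the $i=1$ recursion, invoke Claim~\ref{clm:union'} to flip the sign of $y_1\ES_{n-1}(\widehat{x_1})$ under $\sigma_1$, and then carry out the identical difference-of-squares computation. Your Step~2 is in fact slightly more explicit than the paper's, which leaves the invariance of $\EC_{n-1}(\widehat{x_1})$ under $\sigma_1$ implicit.
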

\begin{proof}
By Claim \ref{clm:union'} we obtain
\begin{eqnarray*}
(\EC_n -1)(\sigma_1(\EC_n -1))	
& = & 
(x_1 \EC_{n-1}(x_2, \ldots, x_n) - y_1 \ES_{n-1}(x_2, \ldots, x_n) - 1)\\
& & 
\cdot(\sigma_1(x_1 \EC_{n-1}(x_2, \ldots, x_n) - 
y_1 \ES_{n-1}(x_2, \ldots, x_n)) - 1)\\
& = & 
(x_1 \EC_{n-1}(\widehat{x}_1) - y_1 \ES_{n-1}(\widehat{x}_1) - 1)\\
& &
\cdot(x_1 \EC_{n-1}(\widehat{x}_1) + y_1 \ES_{n-1}(\widehat{x}_1) - 1)\\
& = & 
(x_1 \EC_{n-1}(\widehat{x}_1)-1)^2 - y_1^2 \ES_{n-1}(\widehat{x}_1)^2\\
& = & 
(x_1 \EC_{n-1}(\widehat{x}_1)-1)^2 - (1-x_1^2) (1-\EC_{n-1}(\widehat{x}_1)^2\\
& = & (x_1 - \EC_{n-1}(\widehat{x}_1))^2,
\end{eqnarray*}
where $(\widehat{x_1})= (x_2, \ldots, x_n)$ as above.
\end{proof}
\begin{corollary}
\label{cor:Cn=hat}
For $n \geq 3$ we have
\[
C_n=\prod_{\sigma \in G'_{n-1}}(x_1 - \sigma(\EC_{n-1}(\widehat{x}_1)))^2.
\]
\end{corollary}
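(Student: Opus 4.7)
The plan is to mimic the proof of Corollary~\ref{cor:P_n} almost verbatim, but using the ``reversed'' coset decomposition of $G_n$ provided by Claim~\ref{clm:union'} in place of Claim~\ref{clm:union}, and Lemma~\ref{lmm:EC2-n} in place of Lemma~\ref{lmm:ECnECn-1}. The underlying reason this works is that $G_n \cong \mathbb{Z}_2^{n-1}$ is abelian, so the roles of $\sigma_1$ and $\sigma_{n-1}$ are interchangeable as far as splitting $G_n$ into cosets is concerned.

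First I would start from the definition $C_n = \prod_{\sigma \in G_n}(\sigma(\EC_n)-1)$ and use Claim~\ref{clm:union'} to rewrite this product as
\[
C_n = \prod_{\sigma \in G'_{n-1}} (\sigma(\EC_n) - 1)(\sigma_1\sigma(\EC_n) - 1).
\]
Next, since $G_n$ is abelian, $\sigma_1\sigma = \sigma\sigma_1$ for each $\sigma \in G'_{n-1}$, so each factor pair becomes $\sigma\!\left((\EC_n-1)(\sigma_1(\EC_n)-1)\right)$. I would then apply Lemma~\ref{lmm:EC2-n} to identify the inner product as $(x_1 - \EC_{n-1}(\widehat{x}_1))^2$, giving
\[
C_n = \prod_{\sigma \in G'_{n-1}} \sigma\!\left((x_1 - \EC_{n-1}(\widehat{x}_1))^2\right).
\]

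Finally, I would push $\sigma$ inside: since $\sigma \in G'_{n-1} \subseteq G_n$ fixes the field $\mathbb{Q}(x_1,\ldots,x_n)$, in particular $\sigma(x_1) = x_1$, so $\sigma((x_1 - \EC_{n-1}(\widehat{x}_1))^2) = (x_1 - \sigma(\EC_{n-1}(\widehat{x}_1)))^2$, yielding the claimed formula.

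There is essentially no obstacle here beyond verifying that Claim~\ref{clm:union'} indeed gives a \emph{disjoint} union (equivalently, that $\sigma_1 \notin G'_{n-1}$), which is immediate because $G'_{n-1}=\langle\sigma_2,\ldots,\sigma_{n-1}\rangle$ fixes $y_1y_2$ while $\sigma_1$ does not. The minor cosmetic check is that the hypothesis $n\geq 3$ (rather than $n\geq 2$) is needed so that $G'_{n-1}$ contains at least one nontrivial generator other than $\sigma_1$, guaranteeing that Claim~\ref{clm:union'} is the natural analogue of Claim~\ref{clm:union} and the product has the expected form.
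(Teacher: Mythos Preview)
Your proposal is correct and follows essentially the same approach as the paper: decompose $G_n$ via Claim~\ref{clm:union'}, pair up the factors, apply Lemma~\ref{lmm:EC2-n}, and push each $\sigma$ inside using that it fixes $x_1$. Your write-up is in fact slightly more careful than the paper's (you explicitly invoke commutativity and the disjointness of the coset decomposition), and your closing remark about $n\geq 3$ is harmless though not strictly necessary---the argument goes through verbatim for $n=2$ with $G'_1=\{e\}$.
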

\begin{proof}
By Lemma \ref{lmm:EC2-n} we obtain as in the proof of Corollary \ref{cor:P_n}
\begin{eqnarray*}
C_n	& = & \prod_{\sigma \in G_n} (\sigma(\EC_n) -1)\\
		& = & \prod_{\sigma \in G'_{n-1} \cup \sigma_1 G'_{n-1} } 
(\sigma(\EC_n) -1)\\
		& = & \prod_{\sigma \in G_{n-1}} (\sigma(\EC_n) -1)
(\sigma\sigma_1(\EC_n) -1)\\
		& = & \prod_{\sigma \in G_{n-1}} \sigma\left(((\EC_n) -1)
(\sigma_1(\EC_n) -1)\right)\\
		& = & \prod_{\sigma \in G_{n-1}} 
\sigma((x_1 -\EC_{n-1}(\widehat{x}_1))^2)\\
		& = & \prod_{\sigma \in G'_{n-1}} 
(x_1 - \sigma(\EC_{n-1}(\widehat{x}_1)))^2.
\end{eqnarray*}
\end{proof}
{\sc Remark:}
For $n=1$ we have $P_1 = C_1 = x_1 -1$. For $n=2$ we have (as defined in 
Corollary~\ref{cor:P_n}) $P_2= x_2 - x_1$. However, this is a matter of taste, 
since we could have set $P_2 = x_1 - x_2$. The case $n=2$ is the only one 
where $C_2(x_1, x_2)$ is symmetric while $P_2$ is not.

By Corollary \ref{cor:Cn=hat} we obtain $C_n = Q_n^2$ where 
\[
Q_n = \prod_{\sigma \in G_{n-1}} (x_1 - \sigma(\EC_{n-1}(\widehat{x}_1))).
\]  
Since $P_n^2 = C_n = Q_n^2$, then as elements in a polynomial ring over a 
field, an integral domain, we get 
$0 = P_n^2 - Q_n^2 = (P_n-Q_n)(P_n+Q_n)$ and hence for each 
$n \geq 2$ we have $Q_n = P_n$ or $Q_n = -P_n$.

For $n=2$ we obtain $P_2 = x_2 -x_1$ and $Q_2 = x_1 - x_2$ so $Q_2 = -P_2$.

For $n \geq 3$ we first note that by evaluating $\EC_{n-1}(\widehat{x}_n)$ and 
$\EC_{n-1}(\widehat{x}_1)$ at $x_2= \cdots = x_{n-1} = 1$ yields
$\EC_{n-1}(\widehat{x}_n) |_{x_2= \cdots = x_{n-1} = 1} = x_1$ and
$\EC_{n-1}(\widehat{x}_1) |_{x_2= \cdots = x_{n-1} = 1} = x_n$
and hence we obtain
\begin{eqnarray*}
P_n(x_1, 1, \ldots, 1, x_n)	
& = & \prod_{\sigma \in G_{n-1}} (x_n - x_1) = (x_n - x_1)^{2^{n-2}}\\
Q_n(x_1, 1, \ldots, 1, x_n)	
& = & \prod_{\sigma \in G_{n-1}} (x_1 - x_n) = (x_1 - x_n)^{2^{n-2}}.
\end{eqnarray*}
As $n \geq 3$, we have $2^{n-2}$ is even and so 
$(x_n - x_1)^{2^{n-2}} = (x_1 - x_n)^{2^{n-2}}$ and hence
$P_n(x_1, 1, \ldots, 1, x_n) = Q_n(x_1, 1, \ldots, 1, x_n)$.  
Therefore we obtain the following:
\begin{corollary}
\label{cor:P=Q}
For $n\geq 3$ we have $Q_n = P_n$ and hence 
\[
P_n = \prod_{\sigma \in G_{n-1}} (x_1 - \sigma(\EC_{n-1}(\widehat{x}_1))).
\]
\end{corollary}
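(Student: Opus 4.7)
The plan is to leverage both factorizations $C_n = P_n^2$ (Corollary~\ref{cor:P_n}) and $C_n = Q_n^2$ (Corollary~\ref{cor:Cn=hat}) simultaneously. Since $\mathbb{Q}[x_1,\ldots,x_n]$ is an integral domain, the identity $P_n^2 - Q_n^2 = (P_n - Q_n)(P_n + Q_n) = 0$ forces one of the two factors to vanish, so $Q_n = P_n$ or $Q_n = -P_n$. The whole content of the corollary therefore reduces to a sign determination, which I would handle by a judicious specialization that makes both products explicitly computable.

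The specialization of choice is $x_2 = x_3 = \cdots = x_{n-1} = 1$. I would justify, by iterating the recursion in Lemma~\ref{lmm:ECES-rec}, that setting any $x_i = 1$ makes $y_i = 0$ and reduces $\EC_{n-1}$ to $x_i \EC_{n-2}(\widehat{x_i})$; applied inductively this yields $\EC_{n-1}(\widehat{x}_n)|_{x_2=\cdots=x_{n-1}=1} = x_1$ and $\EC_{n-1}(\widehat{x}_1)|_{x_2=\cdots=x_{n-1}=1} = x_n$. A subtle point I would want to handle carefully is that $P_n$ and $Q_n$ are \emph{a priori} elements of $\mathbb{Q}[x_1,\ldots,x_n]$ (their coefficients are Galois-invariant in the $y_iy_j$), so the specialization is well-defined even though individual terms $y_iy_{i+1}$ degenerate; the Galois action becomes trivial on the specialized polynomial because every $y_i$ for $2 \le i \le n-1$ vanishes.

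After the specialization each of the $|G_{n-1}| = 2^{n-2}$ factors in $P_n$ collapses to $(x_n - x_1)$, and each factor in $Q_n$ collapses to $(x_1 - x_n)$, giving
\begin{equation*}
P_n(x_1,1,\ldots,1,x_n) = (x_n-x_1)^{2^{n-2}},\qquad Q_n(x_1,1,\ldots,1,x_n) = (x_1-x_n)^{2^{n-2}}.
\end{equation*}
For $n \geq 3$ the exponent $2^{n-2}$ is even, so the two specializations are identical and nonzero, which rules out $Q_n = -P_n$. Hence $Q_n = P_n$, establishing the claimed formula.

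The step I expect to need the most care is not the sign argument itself but the bookkeeping in showing that the Galois action on $\EC_{n-1}(\widehat{x}_1)$ truly has full orbit of size $2^{n-2}$, giving a product of $2^{n-2}$ linear factors in $x_n - \sigma(\EC_{n-1})$ (and similarly for $Q_n$); once the indexing matches $|G_{n-1}|$, the parity argument is immediate. It is worth noting that the argument genuinely requires $n \geq 3$: for $n = 2$ the exponent $2^{n-2} = 1$ is odd, and indeed $Q_2 = -P_2$, consistent with the remark preceding the statement.
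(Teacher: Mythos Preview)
Your proposal is correct and follows essentially the same approach as the paper: both use $P_n^2 = C_n = Q_n^2$ in the integral domain $\mathbb{Q}[x_1,\ldots,x_n]$ to reduce to $Q_n = \pm P_n$, then specialize $x_2 = \cdots = x_{n-1} = 1$ to obtain $P_n = (x_n - x_1)^{2^{n-2}}$ and $Q_n = (x_1 - x_n)^{2^{n-2}}$, and conclude via the evenness of $2^{n-2}$ for $n \ge 3$. Your extra remarks about the well-definedness of the specialization and the orbit size are cautious bookkeeping, but the core argument is identical to the paper's.
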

We now want to show that for $n\geq 3$ the polynomial $P_n$ is symmetric.
Let $n \geq 3$. If $\pi \in S_n$ is a permutation on $\{1, \ldots, n\}$ 
then $\pi$ acts naturally on $(x_1, \ldots, x_n)$ by 
$\pi(x_1, \ldots, x_n)= (x_{\pi(1)}, \ldots, x_{\pi(n)})$.  
By definition of $P_n$ in Corollary~\ref{cor:P_n} we have 
\[
(P_n \circ \pi)(x_1, \ldots, x_n) = P_n(x_{\pi(1)}, \ldots, x_{\pi(n)}) 
= P_n(x_1, \ldots, x_n)
\]
or $P_n \circ \pi =P_n \pi= P_n$ for all 
$\pi \in S_n$ with $\pi(n)=n$. Likewise by Corollary \ref{cor:P=Q} we have 
$P_n \pi = P_n$ for all $\pi \in S_n$ with $\pi(1)= 1$.

Let $\tau \in S_n$ be an arbitrary transposition $\tau=(i,j)$.  If 
$\{i,j\}  \subseteq \{1, \ldots, n-1\}$ or $\{i,j\}  
\subseteq \{2, \ldots, n\}$ then by the above, $P_n   \tau = P_n$.  
Otherwise if $\tau = (1,n)$ then since $n\geq 3$ there is an 
$l \in \{2, \ldots, n-1\}$ such that we can write 
$\tau = (1,n) = (1,l)(l,n)(1,l)$ 
where $\{1, l\}  \subseteq \{2, \ldots, n\}$.  From the above, we 
therefore have
\[
P_n\tau=P_n(1,n)=P_n(1,l)(l,n)(1,l)=P_n (l,n)(1,l)=P_n(1,l)=P_n.
\]
Since each permutation $\pi \in S_n$ is a composition of transpositions 
then we have $P_n \pi = P_n$ for each $\pi \in S_n$.
\begin{theorem}
\label{thm:P_n=sym}
For $n\geq 3$ the polynomial $P_n = P_n (x_1, \ldots, x_n)$ is symmetric.
\end{theorem}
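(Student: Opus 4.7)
The plan is to verify that $P_n$ is fixed by every transposition in $S_n$, since transpositions generate the symmetric group. The key is that we already possess two product formulas for $P_n$ that manifestly display invariance under two different stabilizer subgroups.

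Concretely, the original definition in Corollary~\ref{cor:P_n} writes $P_n = \prod_{\sigma \in G_{n-1}}(x_n - \sigma(\EC_{n-1}(\widehat{x}_n)))$, and Corollary~\ref{cor:P=Q} supplies the alternate expression $P_n = \prod_{\sigma \in G_{n-1}}(x_1 - \sigma(\EC_{n-1}(\widehat{x}_1)))$. For any $\pi \in S_n$ with $\pi(n) = n$, applying $\pi$ to the first formula simply permutes the arguments of $\EC_{n-1}$; since $\EC_{n-1}$ is symmetric in its arguments, and since the outer product runs over every sign choice in $G_{n-1}$, the whole expression is unchanged, hence $P_n \circ \pi = P_n$. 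A parallel argument applied to the second formula shows $P_n \circ \pi = P_n$ whenever $\pi(1) = 1$. Thus any transposition $\tau = (i,j)$ with $\{i,j\} \subseteq \{1,\ldots,n-1\}$ or $\{i,j\} \subseteq \{2,\ldots,n\}$ already leaves $P_n$ invariant.

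The remaining case is the single transposition $\tau = (1,n)$. Here the hypothesis $n \geq 3$ enters: I would pick any intermediate index $l \in \{2,\ldots,n-1\}$ and use the standard conjugation identity $(1,n) = (1,l)(l,n)(1,l)$. Each of the three factors fixes either $1$ or $n$, so by the preceding paragraph each individually preserves $P_n$; hence so does their composition. Since transpositions generate $S_n$, this completes the proof.

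The main obstacle is conceptual rather than computational: one must recognize that the two product representations of $P_n$ are jointly strong enough, and that the lone ``bad'' transposition $(1,n)$ can be conjugated into a product of transpositions that each fix an extreme index. The condition $n \geq 3$ is exactly what is required for such an intermediate $l$ to exist; for $n = 2$ no such $l$ is available, which aligns with the fact that $P_2 = x_2 - x_1$ is antisymmetric rather than symmetric.
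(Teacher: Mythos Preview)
Your proposal is correct and follows essentially the same approach as the paper's own argument: both use the two product representations from Corollary~\ref{cor:P_n} and Corollary~\ref{cor:P=Q} to obtain invariance under the stabilizers of $n$ and of $1$, then handle the single remaining transposition $(1,n)$ via the conjugation $(1,n)=(1,l)(l,n)(1,l)$ with $l\in\{2,\ldots,n-1\}$, and finally appeal to the fact that transpositions generate $S_n$. Your observation that the hypothesis $n\geq 3$ is precisely what guarantees the existence of such an intermediate $l$ matches the paper's reasoning exactly.
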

\begin{corollary}
\label{cor:P_n-x_i}
For $n \geq 3$ and any $i \in \{1, \ldots, n\}$ we have
\[
P_n=\prod_{\sigma \in G_{n-1}} \sigma(x_i - \EC_{n-1}(\widehat{x_i})).
\]
In particular, as a polynomial in $x_i$, then $P_n$ is monic of degree 
$2^{n-2}$ in each $x_i$.
\end{corollary}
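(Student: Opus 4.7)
The approach is to transfer the $i=1$ formula of Corollary~\ref{cor:P=Q} to an arbitrary index $i$ by invoking the full symmetry of $P_n$ established in Theorem~\ref{thm:P_n=sym}, and then read off the degree statement directly from the resulting product.

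First, I fix $i \in \{1,\ldots,n\}$ and consider the transposition $\pi = (1,i) \in S_n$, taken to be the identity when $i = 1$. By Theorem~\ref{thm:P_n=sym} we have $P_n \circ \pi = P_n$. Applying $\pi$ to the identity
\[
P_n(x_1,\ldots,x_n) = \prod_{\sigma \in G_{n-1}} \bigl(x_1 - \sigma(\EC_{n-1}(\widehat{x_1}))\bigr)
\]
of Corollary~\ref{cor:P=Q} replaces $x_1$ by $x_i$ in the free factor and substitutes the argument tuple of $\EC_{n-1}$ accordingly. Since $\EC_{n-1}$ is symmetric in its arguments (noted just after Lemma~\ref{lmm:ECES-rec}), the tuple obtained from $\widehat{x_1}$ under $\pi$ may be reordered to $\widehat{x_i}$ without changing the value of $\EC_{n-1}$. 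Because each $\sigma \in G_{n-1}$ fixes the $x_j$, we may then also rewrite each factor as $\sigma(x_i - \EC_{n-1}(\widehat{x_i}))$, yielding the claimed formula.

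For the ``in particular'' part, observe that $|G_{n-1}| = 2^{n-2}$ by Lemma~\ref{lmm:ZnZn-1}, so the product has exactly $2^{n-2}$ factors. Each factor $x_i - \sigma(\EC_{n-1}(\widehat{x_i}))$ is linear in $x_i$ with leading coefficient $1$, since $\EC_{n-1}(\widehat{x_i})$ is built only from the $x_j$ and $y_j$ with $j \neq i$, and the Galois action $\sigma$ preserves this property. Hence $P_n$, viewed as a polynomial in $x_i$ alone, is monic of degree $2^{n-2}$.

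The main subtle point I anticipate is justifying rigorously the relabeling of the Galois group $G_{n-1}$ under the substitution induced by $\pi$: one must confirm that the indexed product over $G_{n-1}$ in Corollary~\ref{cor:P=Q} transports along $(1,i)$ to the indexed product over the analogous Galois group acting on the products $y_j y_k$ with $j, k \neq i$. This naturality is essentially forced by the intrinsic framing of Definition~\ref{def:galois}, where the Galois group depends only on the omitted index, but I would spell it out in a sentence or two to make the transfer explicit.
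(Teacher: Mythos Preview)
Your proposal is correct and follows exactly the route the paper intends: the corollary is stated without proof precisely because it is meant to follow at once from Theorem~\ref{thm:P_n=sym} (full $S_n$-symmetry of $P_n$) together with the already-established product formulas for $i=n$ (Corollary~\ref{cor:P_n}) and $i=1$ (Corollary~\ref{cor:P=Q}). Your identification of the one genuine subtlety---that the symbol $G_{n-1}$ must be read as the Galois group associated with the omitted index~$i$, and that the permutation $(1,i)$ transports the product over $G'_{n-1}$ in Corollary~\ref{cor:P=Q} to the corresponding group for $\widehat{x_i}$---is well taken and is exactly the kind of naturality the paper's notation suppresses.
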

By Corollary~\ref{cor:P_n-x_i} and definition of $C_{n-1}$ 
we obtain by letting $x_i = 1$ the following:
\begin{observation}
\label{obs:xi=1}
For $n\geq 3$ then
\[
P_n(x_1, \ldots, x_{i-1}, 1, x_{i+1}, \ldots, x_n)	
= \prod_{\sigma \in G_{n-1}} (1 - \sigma(\EC_{n-1}(\widehat{x_i}))) 
= C_{n-1}(\widehat{x_i})
= P_{n-1}(\widehat{x_i})^2.
\]
\end{observation}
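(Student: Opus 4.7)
The plan is to string together three earlier results. First I would invoke Corollary~\ref{cor:P_n-x_i}, which for any $i\in\{1,\ldots,n\}$ expresses
\[
P_n = \prod_{\sigma \in G_{n-1}} \sigma\bigl(x_i - \EC_{n-1}(\widehat{x_i})\bigr).
\]
Each $\sigma \in G_{n-1}$ fixes $\mathbb{Q}(x_1,\ldots,x_n)$, so $\sigma$ commutes with the subtraction of $x_i$ and leaves the constant $1$ untouched. Substituting $x_i=1$ therefore gives
\[
P_n(x_1,\ldots,x_{i-1},1,x_{i+1},\ldots,x_n) = \prod_{\sigma \in G_{n-1}} \bigl(1 - \sigma(\EC_{n-1}(\widehat{x_i}))\bigr),
\]
which is the first claimed equality.

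Next I would compare this product against $C_{n-1}(\widehat{x_i})$. By Definition~\ref{def:cn} applied to the $(n-1)$-tuple $\widehat{x_i}$,
\[
C_{n-1}(\widehat{x_i}) = \prod_{\sigma \in G_{n-1}} \bigl(\sigma(\EC_{n-1}(\widehat{x_i}))-1\bigr),
\]
so the two products differ only by a factor of $(-1)^{|G_{n-1}|}$. By Lemma~\ref{lmm:ZnZn-1}, $|G_{n-1}| = 2^{n-2}$, which is even for every $n\geq 3$. Hence the sign is $+1$ and the second equality follows.

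Finally, I would apply Corollary~\ref{cor:P_n} to $n-1$ (which is legitimate because $n-1\geq 2$), giving $C_{n-1} = P_{n-1}^2$, i.e.\ $C_{n-1}(\widehat{x_i}) = P_{n-1}(\widehat{x_i})^2$. Chaining the three equalities yields the observation. There is no real obstacle here: the only things to watch are that $\sigma$ genuinely fixes the rational constant $1$ (which is true since $\sigma$ fixes $\mathbb{Q}(x_1,\ldots,x_n)$), that the parity of $2^{n-2}$ is even (forcing $n\geq 3$), and that Corollary~\ref{cor:P_n} requires $n-1\geq 2$ (again forcing $n\geq 3$). The hypothesis $n\geq 3$ is exactly what is needed for both parity and applicability of the auxiliary corollary.
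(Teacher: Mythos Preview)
Your proof is correct and follows exactly the approach the paper sketches in the sentence preceding the observation (``By Corollary~\ref{cor:P_n-x_i} and definition of $C_{n-1}$ we obtain by letting $x_i = 1$''); you simply fill in the details the paper leaves implicit, in particular the parity argument for the sign $(-1)^{2^{n-2}}$ and the explicit appeal to Corollary~\ref{cor:P_n} for the final equality $C_{n-1}=P_{n-1}^2$.
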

Other more general equations and formulae hold as well.
Let $n\in\nats$ and $n_1 + \cdots + n_k = n$.  If $\sum_{i=1}^n \theta_i = 2\pi$ 
and $x_i=\cos \theta_i$ for each $i \in \{1, \ldots, n\}$, then
for each $l \in \{1, \ldots, k\}$ let
$\phi_l = \theta_{n_1+\cdots+n_{l-1}+1} + \cdots + \theta_{n_1+\cdots+n_{l}}$.
Then $\sum_{l=1}^k \phi_l = 2\pi$
and hence if $t_l= \cos(\phi_l)$ then by Corollary~\ref{cor:P_n-x_i}
we get 
\[
0 = P_k(t_1, \ldots, t_l) = P_k(\EC_{n_1}, \ldots, \EC_{n_k}),
\]
where for each $l \in \{1, \ldots, k\}$ we have
$\EC_{n_l} = \EC_{n_l}(x_{n_1 + \cdots + n_{l-1}+1}, \ldots, x_{n_1 + \cdots + n_l}).$
In particular for $k=n-1$ and $n_1 = \cdots = n_{n-2} = 1$ and $n_{n-1}=2$, 
we have $P_{n-1}(x_1, \ldots, x_{n-2}, \EC_2(x_{n-1}, x_n))=0$,
something we can use to compute $P_n$ recursively. 
Let $\overline{\EC_2}(x_j, x_{j+1}) = x_jx_{j+1} + y_jy_{j+1}$ be the 
conjugate of $\EC_2(x_j, x_{j+1})$.  Recall that by 
Claim~\ref{clm:union} we have for $n-1$ that 
\[
G_{n-1} = G_{n-2} \cup \sigma_{n-1} G_{n-2} = G_{n-2} \cup G_{n-2}\sigma_{n-1}
\]
and $\sigma_{n-2}(y_{n-1}\ES_{n-2}) = -y_{n-1}\ES_{n-2}$.
\begin{lemma}
\label{lmm:pre-rec}
For $n\geq 3$ we have
\[
(x_n - \EC_{n-1})(x_n - \sigma_{n-2}(\EC_{n-1}))	
= x_{n-1}^2 + x_n^2 -1 - 2x_{n-1} x_n \EC_{n-2}^2 + \EC_{n-2}^2.
\]
\end{lemma}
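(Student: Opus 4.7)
The plan is to apply Lemma~\ref{lmm:ECES-rec} with index $i = n-1$ to decompose $\EC_{n-1}$ into a piece fixed by $\sigma_{n-2}$ and a piece negated by $\sigma_{n-2}$, at which point the product on the left-hand side collapses via a difference of squares. First, taking $i = n-1$ in Lemma~\ref{lmm:ECES-rec} gives
\[
\EC_{n-1} = x_{n-1}\,\EC_{n-2}(\widehat{x_{n-1}}) - y_{n-1}\,\ES_{n-2}(\widehat{x_{n-1}}),
\]
where $\EC_{n-2}$ and $\ES_{n-2}$ are evaluated at $(x_1,\ldots,x_{n-2})$ and involve only $y_1,\ldots,y_{n-2}$.

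The central step is to verify the analog of the identity $\sigma_{n-1}(y_n\ES_{n-1}) = -y_n\ES_{n-1}$ from Claim~\ref{clm:union}, namely
\[
\sigma_{n-2}\bigl(y_{n-1}\,\ES_{n-2}\bigr) = -y_{n-1}\,\ES_{n-2},
\]
while $\sigma_{n-2}$ fixes $\EC_{n-2}$. By Lemma~\ref{lmm:sum}, each term of $\ES_{n-2}$ contains an odd number of factors $y_i$ with $i \leq n-2$, so each term of $y_{n-1}\ES_{n-2}$ contains exactly one factor of the form $y_i y_{n-1}$ with $i \leq n-2$ together with an even number of remaining $y_j$-factors (all with $j \leq n-2$) grouped as products $y_jy_k$ with $k \leq n-2$. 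By Corollary~\ref{cor:sigma_signs} applied to $\sigma_{n-2}$, each $y_jy_k$ with $k \leq n-2$ is fixed, while each $y_iy_{n-1}$ with $i \leq n-2$ is negated. Hence $\sigma_{n-2}(\EC_{n-1}) = x_{n-1}\EC_{n-2} + y_{n-1}\ES_{n-2}$.

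With this in hand, I would expand
\[
(x_n - \EC_{n-1})(x_n - \sigma_{n-2}(\EC_{n-1})) = \bigl(x_n - x_{n-1}\EC_{n-2}\bigr)^2 - \bigl(y_{n-1}\ES_{n-2}\bigr)^2,
\]
then substitute $y_{n-1}^2 = 1 - x_{n-1}^2$ and $\ES_{n-2}^2 = 1 - \EC_{n-2}^2$, the latter being the Pythagorean identity for $\phi = \theta_1 + \cdots + \theta_{n-2}$ (equivalently, a polynomial identity that follows directly from Definition~\ref{def:ECES} and Lemma~\ref{lmm:sum}). Expanding and collecting terms yields the right-hand side of the claim.

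The one nontrivial step is the verification that $\sigma_{n-2}$ negates $y_{n-1}\ES_{n-2}$ and fixes $\EC_{n-2}$; once that parity bookkeeping is done, the remainder is a routine algebraic identity. Everything else is difference-of-squares and substitution.
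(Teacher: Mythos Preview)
Your proposal is correct and follows essentially the same route as the paper: decompose $\EC_{n-1} = x_{n-1}\EC_{n-2} - y_{n-1}\ES_{n-2}$ via Lemma~\ref{lmm:ECES-rec}, use that $\sigma_{n-2}$ negates $y_{n-1}\ES_{n-2}$ and fixes $\EC_{n-2}$, then expand as a difference of squares and substitute $y_{n-1}^2 = 1-x_{n-1}^2$ and $\ES_{n-2}^2 = 1-\EC_{n-2}^2$. The only difference is that you justify the sign behavior of $\sigma_{n-2}$ directly from Lemma~\ref{lmm:sum} and Corollary~\ref{cor:sigma_signs}, whereas the paper simply invokes Claim~\ref{clm:union} applied at level $n-1$ (stated in the paragraph immediately preceding the lemma); these are the same argument.
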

\begin{proof}
Since $\EC_{n-1} = x_{n-1} \EC_{n-2} - y_{n-1} \ES_{n-2}$, we obtain by above
\begin{eqnarray*}
(x_n - \EC_{n-1})(x_n - \sigma_{n-2}(\EC_{n-1}))	
& = & (x_n - x_{n-1} \EC_{n-2} + y_{n-1} \ES_{n-2})\\
&   & \cdot(x_n - x_{n-1} \EC_{n-2} - y_{n-1} \ES_{n-2})\\
& = & (x_n - x_{n-1} \EC_{n-2})^2 - y_{n-1}^2 \ES_{n-2}^2\\
& = & (x_n - x_{n-1} \EC_{n-2})^2 - (1-x_{n-1}^2)(1- \EC_{n-2}^2)\\
& = & x_{n-1}^2 + x_{n}^2 - 1- 2x_{n-1}x_n \EC_{n-2}^2 + \EC_{n-2}^2.
\end{eqnarray*}
\end{proof}
By direct computation and the definition of $P_{n-1}$, since 
$\EC_2(x_i, x_{i+1}) = x_i x_{i+1} - y_i y_{i+1}$, we get
\begin{eqnarray*}
P_{n-1}(x_1, &\ldots,& x_{n-2}, \EC_2(x_{n-1}x_n))P_{n-1}(x_1, 
\ldots, x_{n-2}, \overline{\EC_2}(x_{n-1}x_n))\\
& = & \prod_{\sigma \in G_{n-2}} (\EC_2(x_{n-1}, x_n) - 
\sigma(\EC_{n-2}))\prod_{\sigma \in G_{n-2}} 
(\overline{\EC_2}(x_{n-1}, x_n) - \sigma(\EC_{n-2}))\\
& = & \prod_{\sigma \in G_{n-2}} (x_{n-1}x_n - y_{n-1}y_n - 
\sigma(\EC_{n-2}))\prod_{\sigma \in G_{n-2}} 
(x_{n-1}x_n + y_{n-1}y_n - \sigma(\EC_{n-2}))\\
& = & \prod_{\sigma \in G_{n-2}} \left((x_{n-1}x_n - 
\sigma(\EC_{n-2}))^2 - y_{n-1}^2y_n^2\right)\\
& = & \prod_{\sigma \in G_{n-2}} \left((x_{n-1}x_n - 
\sigma(\EC_{n-2}))^2 - (1-x_{n-1}^2)(1-x_{n}^2)\right)\\
& = & \prod_{\sigma \in G_{n-2}} \left(x_{n-1}^2 + x_{n}^2 - 1- 2x_{n-1}x_n 
\sigma(\EC_{n-2}^2) + \sigma(\EC_{n-2}^2)\right)\\
& = & \prod_{\sigma \in G_{n-2}} \sigma\left(x_{n-1}^2 + x_{n}^2 - 1- 2x_{n-1}x_n 
\EC_{n-2}^2 + \EC_{n-2}^2\right).
\end{eqnarray*}
From this we can prove the following:
\begin{theorem}
\label{thm:recursion1}
The polynomials $P_n$ are completely determined by the following recursion:
$P_1= x_1 - 1$, $P_2= x_2 - x_1$ and for $n\geq 3$
\[
P_n=P_{n-1}(x_1, \ldots, x_{n-2}, \EC_2(x_{n-1}, x_n))P_{n-1}(x_1, 
\ldots, x_{n-2}, \overline{\EC_2}(x_{n-1}, x_n)).
\]
\end{theorem}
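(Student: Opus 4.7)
The base cases $P_1 = x_1 - 1$ and $P_2 = x_2 - x_1$ hold by the definitions collected in the Remark after Corollary~\ref{cor:Cn=hat}, so there is nothing to check for $n \leq 2$. For $n \geq 3$, the strategy is simply to splice together Corollary~\ref{cor:P_n}, Claim~\ref{clm:union}, Lemma~\ref{lmm:pre-rec}, and the displayed identity immediately preceding the theorem; no new computation is needed.

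Starting from the factorization of Corollary~\ref{cor:P_n},
\[
P_n = \prod_{\sigma \in G_{n-1}}\bigl(x_n - \sigma(\EC_{n-1})\bigr),
\]
I would invoke Claim~\ref{clm:union} at index $n-1$ to write $G_{n-1} = G_{n-2} \cup G_{n-2}\sigma_{n-2}$ as a disjoint union of cosets. Pairing the two factors that come from the same coset representative and pulling $\sigma \in G_{n-2}$ outside yields
\[
P_n = \prod_{\sigma \in G_{n-2}} \sigma\bigl((x_n - \EC_{n-1})(x_n - \sigma_{n-2}(\EC_{n-1}))\bigr).
\]
Applying Lemma~\ref{lmm:pre-rec} to the bracketed product rewrites each inner factor as $x_{n-1}^2 + x_n^2 - 1 - 2x_{n-1}x_n\EC_{n-2}^2 + \EC_{n-2}^2$. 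What remains is exactly the final line of the displayed identity preceding the theorem; reading that identity from bottom to top identifies the result with $P_{n-1}(x_1,\ldots,x_{n-2},\EC_2(x_{n-1},x_n))\, P_{n-1}(x_1,\ldots,x_{n-2},\overline{\EC_2}(x_{n-1},x_n))$, which is the desired recursion.

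The one technical point I expect to need care with, and hence the main (mild) obstacle, is ensuring that the coset splitting lines up with the substitutions into $P_{n-1}$: each $\sigma \in G_{n-2}$ must fix $x_{n-1}$, $x_n$, and the product $y_{n-1}y_n$, so that $\sigma$ commutes with the substitution of $\EC_2(x_{n-1},x_n) = x_{n-1}x_n - y_{n-1}y_n$ and of $\overline{\EC_2}(x_{n-1},x_n) = x_{n-1}x_n + y_{n-1}y_n$. The first two are clear since $G_{n-2}$ fixes every $x_i$, and for $y_{n-1}y_n$ Corollary~\ref{cor:sigma_signs} shows that only $\sigma_{n-1}$ negates it, while $\sigma_{n-1}$ is not a generator of $G_{n-2} = \langle \sigma_1, \ldots, \sigma_{n-3}\rangle$. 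With this bookkeeping in place the recursion falls out of the chain of equalities above.
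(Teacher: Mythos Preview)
Your proposal is correct and follows essentially the same route as the paper's proof: start from Corollary~\ref{cor:P_n}, split $G_{n-1}$ via Claim~\ref{clm:union}, pair coset factors, apply Lemma~\ref{lmm:pre-rec}, and then read the displayed identity preceding the theorem in reverse. The technical check you flag about $G_{n-2}$ fixing $y_{n-1}y_n$ is correct but actually unnecessary here, since the displayed identity pairs $\EC_2$ with $\overline{\EC_2}$ first, leaving only $y_{n-1}^2y_n^2 = (1-x_{n-1}^2)(1-x_n^2)$, which every $\sigma$ fixes automatically.
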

\begin{proof}
By Lemma \ref{lmm:pre-rec} and the preceding paragraph we get
\begin{eqnarray*}
P_n & = & \prod_{\sigma \in G_{n-1}} (x_n - \sigma(\EC_{n-1}))\\
& = & 
\prod_{\sigma \in G_{n-2} \cup \sigma_{n-1} G_{n-2}} (x_n - \sigma(\EC_{n-1}))\\
& = & \prod_{\sigma \in G_{n-2}} (x_n - \sigma(\EC_{n-1})) 
(x_n - \sigma\sigma_{n-1}(\EC_{n-1}))\\
& = & \prod_{\sigma \in G_{n-2}} \sigma\left((x_n - (\EC_{n-1})) 
(x_n - \sigma_{n-1}(\EC_{n-1}))\right)\\
& = & \prod_{\sigma \in G_{n-2}} 
\sigma\left(x_{n-1}^2 + x_n^2 -1 -2x_{n-1}x_n \EC_{n-2}^2 + \EC_{n-2}^2\right)\\
& = & P_{n-1}(x_1, \ldots, x_{n-2}, \EC_2(x_{n-1}, x_n))
\cdot P_{n-1}(x_1, \ldots, x_{n-2}, \overline{\EC_2}(x_{n-1}, x_n)).
\end{eqnarray*}
\end{proof}
{\sc Example:}
With the help of MAPLE~\cite{MAPLE} the first 5 polynomials $P_n$ can now be 
computed quickly and efficiently by the recursion in Theorem \ref{thm:recursion1}.
\begin{eqnarray*}
P_1 & = & x_1 - 1.\\ 
P_2 & = & x_2 - x_1.\\ 
P_3 & = & P_2(x_1,\EC_2(x_2, x_3))P_2(x_1,\overline{\EC_2}(x_2, x_3))\\
& = & (x_2x_3 - y_2y_3 - x_1)(x_2x_3 + y_2y_3 - x_1)\\
%& = & (x_2x_3 - x_1)^2 - y_2^2 y_3^2\\
%& = & x_2^2 x_3^2 - 2x_1x_2x_3 + x_1^2 - (1-x_2^2)(1-x_3^2)\\
%& = & x_2^2 x_3^2 - 2x_1x_2x_3 + x_1^2 - 1 + x_2^2 + x_3^2 - x_2^2 x_3^2\\
& = & x_1^2 + x_2^2 + x_3^2 - 2x_1x_2x_3 - 1.\\ 
P_4	
& = & P_3(x_1,x_2,\EC_2(x_3,x_4))P_3(x_1,x_2,\overline{\EC_2}(x_3, x_4))\\
& = & (x_1^2 + x_2^2 + (x_3x_4 - y_3y_4)^2 - 2x_1x_2(x_3x_4 - y_3y_4) - 1)\\
&   & \cdot(x_1^2 + x_2^2 + (x_3x_4 + y_3y_4)^2 - 2x_1x_2(x_3x_4 + y_3y_4) - 1)\\
& = & x_1^4 + x_2^4 + x_3^4 + x_4^4 - 
2(x_1^2x_2^2+x_2^2x_3^2 + x_3^2x_4^2 + x_1^2x_4^2 + x_1^2x_3^2 + x_2^2x_4^2) \\
&   & + 4(x_1^2x_2^2x_3^2 + x_2^2x_3^2x_4^2 + x_1^2x_3^2x_4^2 + x_1^2x_2^2x_4^2) \\
&   & + 4x_1x_2x_3x_4(2-x_1^2-x_2^2-x_3^2-x_4^2).\\
P_5
& = & \mbox{a display of terms on two letter size pages, see Appendix~\ref{appx:P5}.}
\end{eqnarray*}

The recursion given in Theorem~\ref{thm:recursion1}, although 
fundamental for computation, is a special case of a more general 
recursion that $P_n$ satisfies:
\begin{claim}
\label{clm:recursion2}
Let $n,k\geq 2$ and $n_1 + \cdots + n_k = n$.
By the right interpretation of $\sigma_i$
for each $i\in[k]$ (and with some abuse of notation)  
then $P_n = P_n(x_1, \ldots, x_n)$ satisfies the following
general recursion
\begin{eqnarray*}
P_n(x_1, \ldots, x_n)  =  \prod_{\stackrel{\sigma_i \in G_{n_i-1}}{ i\in[k]}} &&  
P_k\left(\sigma_1(\EC_{n_1}(x_1, \ldots, x_{n_1})), \sigma_2(\EC_{n_2}(x_{n_1+1}, 
\ldots,x_{n_1+n_2})),\right. \\
&& \left.\ldots, \sigma_k(\EC_{n_k}(x_{n-n_k+1}, \ldots, x_n))\right). 
\end{eqnarray*}
\end{claim}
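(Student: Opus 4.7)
The plan is to generalize the proof of Theorem~\ref{thm:recursion1} (which is the case $k=n-1$ with blocks $(1,\ldots,1,2)$) by decomposing the Galois group $G_n$ along the partition $n=n_1+\cdots+n_k$. Write $a_l:=n_1+\cdots+n_{l-1}$, so block $l$ is $\{a_l+1,\ldots,a_l+n_l\}$. The first ingredient is the algebraic identity
$\EC_n(x_1,\ldots,x_n)=\EC_k(X_1,\ldots,X_k)$,
with $X_l:=\EC_{n_l}(x_{a_l+1},\ldots,x_{a_l+n_l})$ and $Y_l:=\ES_{n_l}(x_{a_l+1},\ldots,x_{a_l+n_l})$, obtained by iterating Lemma~\ref{lmm:ECES-rec}. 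A short induction (again using Lemma~\ref{lmm:ECES-rec}) shows $X_l^2+Y_l^2=1$, so at the outer $k$-variable level the pair $(X_l,Y_l)$ takes on the role of $(x_l,y_l)$.

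Next I decompose $G_n$ itself. The generators $\sigma_j$ of Lemma~\ref{lmm:neg} split into intra-block generators (those with $j,j+1$ in the same block $l$) and the $k-1$ boundary generators (those with $j=a_{l+1}$ for some $l<k$). For each $l$ the intra-block generators produce a subgroup $H_l$ of order $2^{n_l-1}$ acting on block $l$; the boundary generators produce a subgroup $K$ of order $2^{k-1}$. Since $\sum_l(n_l-1)+(k-1)=n-1$ and $G_n$ is abelian, $G_n=\bigl(\prod_{l=1}^k H_l\bigr)\times K$. The key structural fact is that each $\tau\in K$ fixes every $X_l$ and every intra-block $y$-product, while acting on the cross-products $Y_lY_{l'}$ exactly as $G_k$ acts on $y_ly_{l'}$; this is extracted from Corollary~\ref{cor:sigma_signs} together with the odd $y$-parity of $\ES_{n_l}$ within block $l$ (Lemma~\ref{lmm:sum}). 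The ``abuse of notation'' $\sigma_i\in G_{n_i-1}$ in the statement should be read as $\sigma_i\in H_i$.

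Consequently, for any $\sigma=((\sigma_1,\ldots,\sigma_k);\tau)\in G_n$ the identity gives $\sigma(\EC_n)=\tau(\EC_k(\sigma_1(X_1),\ldots,\sigma_k(X_k)))$, and Definition~\ref{def:cn} splits:
\[
C_n=\prod_{(\sigma_1,\ldots,\sigma_k)\in\prod_l H_l}\prod_{\tau\in K}\bigl(\tau(\EC_k(\sigma_1(X_1),\ldots,\sigma_k(X_k)))-1\bigr)=\prod_{(\sigma_1,\ldots,\sigma_k)}C_k(\sigma_1(X_1),\ldots,\sigma_k(X_k)).
\]
Corollary~\ref{cor:P_n} yields $C_n=P_n^2$ and each $C_k(\cdots)=P_k(\cdots)^2$, so taking a square root gives $P_n=\pm\prod_{(\sigma_1,\ldots,\sigma_k)}P_k(\sigma_1(X_1),\ldots,\sigma_k(X_k))$. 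To pin down the sign, specialize $x_j=1$ for every $j$ other than one representative $x_{a_l+1}$ per block. On the right, each $X_l$ collapses to $x_{a_l+1}$ and the product reduces to $P_k(x_{a_1+1},\ldots,x_{a_k+1})^{2^{n-k}}$; on the left, iterating Observation~\ref{obs:xi=1} for the $n-k$ specializations yields the same expression, forcing the sign to be $+1$.

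The main obstacle is the Galois-theoretic bookkeeping in the decomposition step: proving that each $\sigma_l\in H_l$ acts on $\EC_k(X_1,\ldots,X_k)$ only through its $l$-th argument rather than altering cross-terms $Y_lY_{l'}$ with $l'\ne l$. This reduces to a careful parity count of $y$-factors within and across blocks, using that $\EC_{n_l}$ and $\ES_{n_l}$ have even and odd $y$-parity respectively (Lemma~\ref{lmm:sum}); once the decomposition is established, the remaining manipulation is a purely formal product identity paralleling the proof of Corollary~\ref{cor:P_n}.
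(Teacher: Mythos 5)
First, note that the paper does not actually prove this claim: it explicitly omits the argument and only remarks that the claim ``can be proved using induction in stages using Theorem~\ref{thm:recursion1} as a stepping stone,'' i.e.\ by repeatedly merging two adjacent blocks at a time. Your proposal instead attempts a direct, one-shot decomposition of $G_n$ along the partition, which is a legitimately different route, and several of its ingredients are sound: the identity $\EC_n=\EC_k(X_1,\ldots,X_k)$, the fact that the boundary subgroup $K$ fixes each $X_l$ and acts on the cross-products $Y_lY_{l'}$ exactly as $G_k$ acts on $y_ly_{l'}$, and the sign-fixing specialization via Observation~\ref{obs:xi=1} are all correct.

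However, the step you yourself flag as ``the main obstacle'' is not a bookkeeping detail that a parity count confirms --- the parity count refutes it. Take $n=4$ with blocks $\{1\},\{2,3\},\{4\}$, so $H_2=\langle\sigma_2\rangle$ and $K=\langle\sigma_1,\sigma_3\rangle$. By Corollary~\ref{cor:sigma_signs}, $\sigma_2(y_1y_4)=s_{\sigma_2;1}s_{\sigma_2;2}s_{\sigma_2;3}\,y_1y_4=-y_1y_4$, i.e.\ the intra-block generator of block $2$ \emph{flips} the cross-term $Y_1Y_3$ between blocks $1$ and $3$; moreover $\sigma_2(Y_1Y_2)=y_1(x_3y_2-x_2y_3)$ while $\sigma_2(Y_2Y_3)=(-x_3y_2+x_2y_3)y_4$, so there is no single branch $\overline{Y_2}$ of $\sqrt{1-\overline{X_2}^2}$ making $\sigma_2$ act ``only through the second argument.'' Consequently $\sigma_2(\EC_4)\neq\EC_3(X_1,\sigma_2(X_2),X_3)$ for either branch, and the displayed factorization of $C_n$ as a double product over $\prod_lH_l$ and $K$ does not follow as written. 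What is true is that each intra-block generator acts as a genuine block-conjugation \emph{composed with} a nontrivial element of $K$ (in the example the pure conjugation of block $2$ is $\sigma_2\sigma_3$, not $\sigma_2$), so one must either replace $\prod_lH_l$ by this twisted complement and check that the resulting reindexing is a bijection of $G_n$ preserving the multiset of conjugates, or abandon the direct decomposition and follow the paper's suggested induction, splitting off one block boundary at a time via Theorem~\ref{thm:recursion1}. Until one of these is carried out, the central identity $\sigma(\EC_n)=\tau(\EC_k(\sigma_1(X_1),\ldots,\sigma_k(X_k)))$ on which the whole argument rests is unproved (and, with your choice of $H_l$, false).
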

As this more general recursion of Claim~\ref{clm:recursion2}
will not be used to obtain our main result Theorem~\ref{thm:irreducible}
here below, its proof in detail will be omitted. However, 
this can be proved using induction in stages using
Theorem~\ref{thm:recursion1} as a stepping stone.

{\sc Example:}
We demonstrate how Claim~\ref{clm:recursion2} works by using it
to compute $P_5$, since $n=5$ is the smallest nontrivial example
(with $k\geq 3$) that can be generated using a recurrence from 
Claim~\ref{clm:recursion2} that is not an example of the special 
recurrence from  Theorem~\ref{thm:recursion1}:
\begin{eqnarray*}
P_5(x_1, \ldots, x_5)	
& = &	
\prod_{\stackrel[\sigma_3'\in G_{3}=\langle\sigma_3\rangle]{\sigma_1'\in G_{1}=\langle\sigma_1\rangle}{\sigma_2'\in G_{2}= \{e\}}}
P_3(\sigma_1'(\EC_2(x_1,x_2)),\sigma_2'(\EC_1(x_3)),\sigma_3'(\EC_2(x_4,x_5)))\\
& = & 
\prod_{\stackrel[\sigma_3'\in G_{3}=\langle\sigma_3\rangle]{\sigma_1'\in G_{1}=\langle\sigma_1 
\rangle}{\sigma_2' \in G_{2}= \{e\}}}
P_3(\sigma_1'(x_1x_2 - y_1y_2), \sigma_2'(x_3), \sigma_3'(x_4x_5 - y_4y_5))\\
& = & P_3(x_1x_2 - y_1y_2, x_3, x_4x_5 - y_4y_5) \cdot 
P_3(x_1x_2 + y_1y_2, x_3, x_4x_5 - y_4y_5)\\
&   & \cdot P_3(x_1x_2 - y_1y_2, x_3, x_4x_5 + y_4y_5) \cdot 
P_3(x_1x_2 + y_1y_2, x_3, x_4x_5 + y_4y_5)\\
& = & ((x_1x_2 - y_1y_2)^2 + x_3^2 + (x_4x_5 - y_4y_5)^2 - 
2(x_1x_2 - y_1y_2)x_3(x_4x_5 - y_4y_5) - 1)\\
&   & \cdot((x_1x_2 + y_1y_2)^2 + x_3^2 + (x_4x_5 - y_4y_5)^2 - 
2(x_1x_2 + y_1y_2)x_3(x_4x_5 - y_4y_5) - 1)\\
&   & \cdot((x_1x_2 - y_1y_2)^2 + x_3^2 + (x_4x_5 + y_4y_5)^2 - 
2(x_1x_2 - y_1y_2)x_3(x_4x_5 + y_4y_5) - 1)\\
&   & \cdot((x_1x_2 + y_1y_2)^2 + x_3^2 + (x_4x_5 + y_4y_5)^2 - 
2(x_1x_2 + y_1y_2)x_3(x_4x_5 + y_4y_5) - 1).
\end{eqnarray*}
Expanded, this last product yields the same expression for $P_5$ 
as given in Appendix~\ref{appx:P5}.

\vspace{3 mm}

Our final goal in this section, and our main result of the paper, is to 
prove the irreducibility of $P_n$.  
To illuminate our approach we state and prove the following simplest case, 
that $P_3 = P_3(x_1, x_2, x_3)$ is irreducible.

Suppose $P_3 = fg$ with $f,g \in \mathbb{Q}[x_1, x_2, x_3]$.  Since $P_3$ 
is monic in $x_3$, both $f$ and $g$ contain the variable $x_3$, and hence 
both $f$ and $g$ are of degree 1 in $x_3$ (unless $f$ or $g=P_3$.)  Since 
$P_3$ factors in $\mathbb{Q}(x_1, x_2, y_1y_2)[x_3]$ as 
$P_3=(x_3-x_1x_2-y_1y_2)(x_3-x_1x_2+y_1y_2)$ by definition of $P_3$, 
then since $\mathbb{Q}(x_1,x_2,y_1y_2)[x_3]$ is a UFD we must have 
\[
\{f,g\} = \{x_3-x_1x_2-y_1y_2, x_3-x_1x_2+y_1y_2\}
\]
which contradicts the assumption that 
$f,g \in \mathbb{Q}[x_1, x_2, x_3]$.  Hence we have 
the following observation:
\begin{observation}
The polynomial $P_3(x_1, x_2, x_3)$ is irreducible over $\mathbb{Q}$.
\end{observation}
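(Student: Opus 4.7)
The plan is to exploit the explicit quadratic factorization of $P_3$ over a larger ring, together with unique factorization, to rule out any nontrivial factorization in $\mathbb{Q}[x_1, x_2, x_3]$.

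First I would appeal to Corollary~\ref{cor:P_n-x_i} to note that $P_3$ is monic of degree $2^{3-2} = 2$ when viewed as a polynomial in $x_3$. Suppose for contradiction that $P_3 = fg$ is a nontrivial factorization with $f, g \in \mathbb{Q}[x_1, x_2, x_3]$. If one factor, say $f$, had $x_3$-degree $0$, then $f \in \mathbb{Q}[x_1, x_2]$ would divide a polynomial monic in $x_3$, forcing $f$ to divide $1$ in $\mathbb{Q}[x_1, x_2]$, i.e., $f \in \mathbb{Q}^\times$, contradicting nontriviality. So both $f$ and $g$ have $x_3$-degree exactly $1$.

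Next I would pass to the UFD $R := \mathbb{Q}(x_1, x_2, y_1 y_2)[x_3]$. By the definition of $P_3$ in Corollary~\ref{cor:P_n}, the explicit splitting
\[
P_3 = (x_3 - x_1 x_2 - y_1 y_2)(x_3 - x_1 x_2 + y_1 y_2)
\]
holds in $R$. The two linear factors are monic and distinct (they differ by the nonzero element $2 y_1 y_2$), so unique factorization in $R$ forces $\{f, g\}$ to coincide with those two linear polynomials.

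The concluding contradiction is that $y_1 y_2 = \sqrt{(1-x_1^2)(1-x_2^2)}$ does not lie in $\mathbb{Q}(x_1, x_2)$, since $Y^2 - (1-x_1^2)(1-x_2^2)$ is irreducible over $\mathbb{Q}(x_1, x_2)$, and hence neither of the two displayed linear polynomials lies in $\mathbb{Q}[x_1, x_2, x_3]$. The main delicate point is to legitimately match the two factorizations; this is clean once both are normalized to be monic in $x_3$ and viewed in the common UFD $R$, so no heavy machinery is needed beyond unique factorization.
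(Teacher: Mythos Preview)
Your proposal is correct and follows essentially the same approach as the paper: both argue that a nontrivial factorization $P_3=fg$ forces $f,g$ to be linear in $x_3$, then compare with the explicit splitting $P_3=(x_3-x_1x_2-y_1y_2)(x_3-x_1x_2+y_1y_2)$ in the UFD $\mathbb{Q}(x_1,x_2,y_1y_2)[x_3]$ to obtain a contradiction since $y_1y_2\notin\mathbb{Q}(x_1,x_2)$. Your justification for why neither factor can have $x_3$-degree $0$ (via the leading coefficient being $1$) is in fact slightly more careful than the paper's one-line remark.
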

Note that the same argument holds if $\mathbb{Q}$ is replaced
with the complex field $\mathbb{C}$ in the above.

We now use this same approach to prove the following:
\begin{theorem}
\label{thm:irreducible}
For each $n \geq 3$ the polynomial $P_n(x_1, \ldots, x_n)$ is 
irreducible over $\mathbb{Q}$.
\end{theorem}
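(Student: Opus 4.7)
The plan is to extend the $n=3$ argument by working in the UFD $L[x_n]$, where $L = \mathbb{Q}(x_1, \ldots, x_{n-1}, y_iy_j : 1 \leq i < j \leq n-1)$. By Corollary~\ref{cor:P_n-x_i}, $P_n$ is monic of degree $2^{n-2}$ in $x_n$ and factors in $L[x_n]$ as $\prod_{\sigma \in G_{n-1}}(x_n - \sigma(\EC_{n-1}))$. I would suppose toward a contradiction that $P_n = fg$ is a nontrivial factorization in $\mathbb{Q}[x_1, \ldots, x_n]$. Since $P_n$ is monic in $x_n$, Gauss's lemma lets us renormalize so that $f$ and $g$ are both monic in $x_n$ with positive $x_n$-degrees summing to $2^{n-2}$.

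The main (and essentially only nontrivial) step is showing that the $2^{n-2}$ linear factors $x_n - \sigma(\EC_{n-1})$ are pairwise distinct, i.e.\ that the stabilizer of $\EC_{n-1}$ in $G_{n-1}$ is trivial. Expand $\EC_{n-1}$ via Lemma~\ref{lmm:sum} as
\[
\EC_{n-1} = \sum_{\substack{I \subseteq \{1,\ldots,n-1\} \\ |I| \text{ even}}} (-1)^{|I|/2} \Bigl(\prod_{i \in I} y_i\Bigr)\Bigl(\prod_{j \notin I} x_j\Bigr).
\]
Lift $\sigma \in G_{n-1}$ to $\tilde\sigma \in G^*_{n-1}$ and write $\tilde\sigma(y_i) = \lambda_i y_i$ with $\lambda_i \in \{\pm 1\}$. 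Matching monomials in the equation $\sigma(\EC_{n-1}) = \EC_{n-1}$ forces $\prod_{i \in I}\lambda_i = 1$ for every even $I \subseteq \{1,\ldots,n-1\}$; specializing to $I = \{1, j\}$ for $j = 2, \ldots, n-1$ yields $\lambda_1 = \lambda_j$ for every such $j$, so the $\lambda_i$'s are all equal. But $(1,\ldots,1)$ and $(-1,\ldots,-1)$ are precisely the two elements of the kernel of the surjection $G^*_{n-1} \twoheadrightarrow G_{n-1}$ implicit in Lemma~\ref{lmm:ZnZn-1}, so $\sigma$ is the identity in $G_{n-1}$.

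With distinctness in hand, the rest follows the $P_3$ template. Since $L[x_n]$ is a UFD and $P_n$ factors there into $2^{n-2}$ distinct monic linear polynomials, the monic divisor $f$ must have the form $f = \prod_{\sigma \in S}(x_n - \sigma(\EC_{n-1}))$ for some $S \subseteq G_{n-1}$. The coefficients of $f$ lie in $\mathbb{Q}[x_1, \ldots, x_{n-1}]$ and are therefore fixed by $\Gal(L/\mathbb{Q}(x_1, \ldots, x_{n-1})) = G_{n-1}$; applying any $\tau \in G_{n-1}$ gives
\[
\prod_{\sigma \in S}(x_n - \sigma(\EC_{n-1})) = \tau(f) = \prod_{\sigma \in \tau S}(x_n - \sigma(\EC_{n-1})),
\]
so $\tau S = S$ by unique factorization. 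Since the left regular action of $G_{n-1}$ on itself is transitive, $S = \emptyset$ or $S = G_{n-1}$, contradicting that both $f$ and $g$ have positive $x_n$-degree. Hence $P_n$ is irreducible over $\mathbb{Q}$.
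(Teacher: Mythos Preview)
Your proof is correct and follows a genuinely different route from the paper's. The paper proceeds by induction on $n$: assuming $P_{n-1}$ is irreducible, it uses the evaluation $\phi_1$ at $x_1=1$ (which sends $P_n$ to $P_{n-1}^2$ by Observation~\ref{obs:xi=1}) together with the recursion of Theorem~\ref{thm:recursion1} to force any putative factor $f$ to coincide with one of $P_{n-1}(\EC_2,x_3,\ldots,x_n)$ or $P_{n-1}(\overline{\EC_2},x_3,\ldots,x_n)$, and then checks that neither of these lies in $\mathbb{Q}[x_1,\ldots,x_n]$. Your argument is instead the classical minimal-polynomial argument carried out directly: you show that $\EC_{n-1}$ has trivial stabilizer in $G_{n-1}$ (so its $2^{n-2}$ conjugates are distinct), and then any monic factor of $P_n$ over the fixed field $\mathbb{Q}(x_1,\ldots,x_{n-1})$ must correspond to a $G_{n-1}$-invariant subset of these conjugates, hence to $\emptyset$ or all of $G_{n-1}$ by transitivity of the regular action. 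Your approach is non-inductive, shorter, and in effect proves Corollary~3.17 first (that $P_n$ is the minimal polynomial of $\EC_{n-1}$ over $\mathbb{Q}(x_1,\ldots,x_{n-1})$), deducing irreducibility as a consequence; the paper goes the other way. The paper's route, by contrast, exploits and reinforces the recursive structure (Theorem~\ref{thm:recursion1}, Lemma~\ref{lmm:h*}) that is used elsewhere. One small point worth making explicit in your write-up: the ``matching monomials'' step relies on the fact that the products $\prod_{i\in I}y_i$ over subsets $I\subseteq\{1,\ldots,n-1\}$ form a $\mathbb{Q}(x_1,\ldots,x_{n-1})$-basis of the top field, which follows from the tower argument in Lemma~\ref{lmm:ZnZn-1}.
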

We will prove Theorem~\ref{thm:irreducible} by induction on $n$, 
assuming that $P_{n-1}$ is irreducible over $\mathbb{Q}$. But
before we can delve into that, we need to prove the following:
\begin{lemma}
\label{lmm:h*}
Let $n \geq 3$. If $P_{n-1}$ is irreducible over $\mathbb{Q}$ then
$P_{n-1}(\EC_2(x_1, x_2), x_3, \ldots, x_n) 
=P_{n-1}(x_1x_2-y_1y_2, x_3, \ldots, x_n)$ and 
$P_{n-1}(\overline{\EC_2}(x_1, x_2), x_3, \ldots, x_n)	
=P_{n-1}(x_1x_2+y_1y_2, x_3, \ldots, x_n)$
are irreducible in $\mathbb{Q}(x_1, x_2, y_1y_2)[x_3, \ldots, x_n]$.
\end{lemma}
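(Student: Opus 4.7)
The plan is to lift the irreducibility of $P_{n-1}$ up the tower of coefficient fields
\[
\mathbb{Q}(t) \;\subset\; K := \mathbb{Q}(t, x_1) \;\subset\; L = \mathbb{Q}(x_1, x_2, y_1 y_2),
\]
where $t = \EC_2(x_1, x_2) = x_1 x_2 - y_1 y_2$. First I would verify that $t$ is transcendental over $\mathbb{Q}$ (formally specializing $x_2 = 1$ sends $y_2$ to $0$ and $t$ to $x_1$, so any polynomial annihilating $t$ would annihilate $x_1$) and, consequently, that $t, x_3, \ldots, x_n$ are algebraically independent over $\mathbb{Q}$. The $\mathbb{Q}$-algebra map $\mathbb{Q}[x_1', \ldots, x_{n-1}'] \to \mathbb{Q}[t, x_3, \ldots, x_n]$ sending $x_1' \mapsto t$ and $x_i' \mapsto x_{i+1}$ is then an isomorphism of polynomial rings, and the hypothesis yields $P_{n-1}(t, x_3, \ldots, x_n)$ irreducible in $\mathbb{Q}[t, x_3, \ldots, x_n]$. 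Two applications of Gauss's lemma promote this to irreducibility in $\mathbb{Q}(t)[x_3, \ldots, x_n]$ and then, since the polynomial does not involve $x_1$, in $K[x_3, \ldots, x_n]$.

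The heart of the argument is the final extension $K \subset L$. Solving the relation $(x_1 x_2 - t)^2 = (1 - x_1^2)(1 - x_2^2)$ as a quadratic in $x_2$ exhibits $L = K[\sqrt{D}]$ for $D := (t^2 - 1)(x_1^2 - 1)$, a degree two algebraic extension whose nontrivial automorphism $\tau$ sends $\sqrt{D} \mapsto -\sqrt{D}$, equivalently $y_1 y_2 \mapsto -y_1 y_2$. Suppose for contradiction that $P_{n-1}(t, x_3, \ldots, x_n) = f g$ in the UFD $L[x_3, \ldots, x_n]$ with both $f$ and $g$ non-units. Since $P_{n-1}(t, x_3, \ldots, x_n) \in K[x_3, \ldots, x_n]$ is $\tau$-fixed, applying $\tau$ and multiplying gives $P_{n-1}(t, x_3, \ldots, x_n)^2 = (f \tau(f))(g \tau(g))$ in $K[x_3, \ldots, x_n]$, where each of $f \tau(f)$ and $g \tau(g)$ is a non-unit of $K[x_3, \ldots, x_n]$. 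The irreducibility already established over $K$ then forces $f \tau(f) = c \cdot P_{n-1}(t, x_3, \ldots, x_n)$ for some $c \in K^*$. Writing $f = A_0 + A_1 \sqrt{D}$ with $A_0, A_1 \in K[x_3, \ldots, x_n]$, the norm equals $A_0^2 - D A_1^2$, and necessarily $A_1 \neq 0$ (otherwise $f \in K[x_3, \ldots, x_n]$ would be a proper factor of $P_{n-1}$ there).

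Clearing denominators transforms $A_0^2 - D A_1^2 = c \cdot P_{n-1}(t, x_3, \ldots, x_n)$ into
\[
B_0^2 - (t^2 - 1)(x_1^2 - 1) B_1^2 \;=\; e'(x_1, t) \cdot P_{n-1}(t, x_3, \ldots, x_n)
\]
inside the UFD $\mathbb{Q}[x_1, t, x_3, \ldots, x_n]$, for suitable $B_0, B_1$ (with $B_1 \neq 0$) and $e' \in \mathbb{Q}[x_1, t]$. A short cleanup removes any common $P_{n-1}$-factor from $B_0$ and $B_1$ — one uses that $P_{n-1}(t, x_3, \ldots, x_n)$ actually involves $x_3, \ldots, x_n$ whereas $e'$ does not, so $P_{n-1}$ cannot divide $e'$ — reducing to the case $B_1 \not\equiv 0 \pmod{P_{n-1}}$. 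Passing to the fraction field $F$ of the integral domain $R := \mathbb{Q}[x_1, t, x_3, \ldots, x_n]/(P_{n-1})$, the identity becomes $D = (B_0/B_1)^2$, forcing $D$ to be a square in $F$. But $P_{n-1}(t, x_3, \ldots, x_n)$ does not involve $x_1$, so $R = R_0[x_1]$ with $R_0 := \mathbb{Q}[t, x_3, \ldots, x_n]/(P_{n-1})$, and $F$ is the purely transcendental extension $\mathrm{Frac}(R_0)(x_1)$; in this field the linear factor $(x_1 - 1)$ of $D = (t-1)(t+1)(x_1-1)(x_1+1)$ appears with odd multiplicity, so $D$ is not a square. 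This final non-square obstruction is the main obstacle, and ruling it out establishes the irreducibility of $P_{n-1}(\EC_2(x_1, x_2), x_3, \ldots, x_n)$; the statement for $\overline{\EC_2}(x_1, x_2) = \tau(t)$ is then immediate by applying $\tau$ to the conclusion.
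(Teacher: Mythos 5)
Your argument is essentially correct, but it takes a genuinely different route from the paper. The paper exploits the explicit Galois-theoretic product formula $P_{n-1}=\prod_{\sigma\in G_{n-2}}(x_n-\sigma(\EC_{n-2}))$: any factorization of $P_{n-1}^*=P_{n-1}(\EC_2,x_3,\ldots,x_n)$ must split into subproducts of these linear factors in $x_n$, and the specialization $x_1=1$ (which sends $\EC_2(x_1,x_2)\mapsto x_2$) turns such a factorization into a nontrivial factorization of $P_{n-1}(x_2,\ldots,x_n)$, contradicting the inductive hypothesis. You instead treat $t=\EC_2(x_1,x_2)$ as a fresh transcendental, lift irreducibility up to $K=\mathbb{Q}(x_1,t)$ by change of variables and Gauss's lemma, and then handle the quadratic extension $L=\mathbb{Q}(x_1,x_2,y_1y_2)=K(\sqrt{D})$ with $D=(t^2-1)(x_1^2-1)$ by a norm argument, reducing everything to the fact that $D$ is not a square in the function field of the hypersurface $P_{n-1}(t,x_3,\ldots,x_n)=0$, which you settle by an odd-multiplicity count at the prime $x_1-1$. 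Your route is longer but uses only that $P_{n-1}$ is irreducible and genuinely involves $x_n$, whereas the paper's relies on the product structure of $P_{n-1}$ (and on the specialization $x_1=1$ being well defined on the coefficients); each argument is legitimate.

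Two points deserve correction or explicit mention. First, the nontrivial automorphism $\tau$ of $L/K$ satisfies $\tau(\sqrt{D})=-\sqrt{D}$ but it is \emph{not} the map $y_1y_2\mapsto -y_1y_2$: that map fixes $x_1,x_2$ and sends $t\mapsto\overline{\EC_2}$, so it does not fix $K$ at all, and $\tau(t)=t\neq\overline{\EC_2}$. This slip is harmless in the main argument (you only use that $\tau$ generates $\Gal(L/K)$ and negates $\sqrt{D}$), but your closing sentence should invoke the $\mathbb{Q}(x_1,x_2)$-automorphism $y_1y_2\mapsto -y_1y_2$ of $L$ — which preserves irreducibility in $L[x_3,\ldots,x_n]$ and carries $P_{n-1}(\EC_2,x_3,\ldots,x_n)$ to $P_{n-1}(\overline{\EC_2},x_3,\ldots,x_n)$ — rather than $\tau$. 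Second, the final step needs $t^2-1\not\equiv 0$ modulo $P_{n-1}(t,x_3,\ldots,x_n)$ (otherwise $D$ vanishes in $F$ and is trivially a square); this holds because $P_{n-1}$ has positive degree in $x_n$ while $t^2-1$ does not, but it should be said.
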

\begin{proof}
Let $P_{n-1}^*:=P_{n-1}(x_1x_2-y_1y_2, x_3, \ldots, x_n)$ and assume it
factors as $P_{n-1}^*= h^*k^*$ in the ring $\mathbb{Q}(x_1, x_2, y_1y_2)[x_3, \ldots, x_n]$, 
where both $h^*$ and $k^*$ involve $x_n$. Since
$P_{n-1}=\prod_{\sigma \in G_{n-2}} (x_n - \sigma(\EC_{n-2}))$,
we see that 
\[
P_{n-1}^*=\prod_{\sigma \in G_{n-2}} (x_n - 
\sigma(\EC_{n-2}(x_1x_2-y_1y_2, x_3, \ldots, x_n))),
\]
and hence both $h^*$ and $k^*$ must be products of these linear factors.
In particular, we can evaluate $P_{n-1}^* = h^*k^*$ at $x_1 = 1$ and obtain
\[
P_{n-1}(x_2, \ldots, x_n)=\left(P_{n-1}^* \right) |_{x_1=1} = 
\left(h^* |_{x_1-1} \right)\left(k^* |_{x_1-1} \right)= hk
\]
in $\mathbb{Q}(x_2)[x_3, \ldots, x_n]$, which is a UFD.
By assumption $P_{n-1}(x_2, \ldots, x_n)$ is irreducible
in  the ring $\mathbb{Q}[x_2, \ldots, x_n] = \mathbb{Q}[x_2][x_3, \ldots, x_n]$
and hence also in $\mathbb{Q}(x_2)[x_3, \ldots, x_n]$ (as a monic polynomial
in $x_n$). Therefore
either $h$ or $k$ equals 
$P_{n-1}(x_2, \ldots, x_n)$, which contradicts the fact that both $h^*$ and 
$k^*$ involve $x_n$.  Hence $P_{n-1}^*$ is irreducible. In the same way we 
obtain that $P_{n-1}(x_1x_2+y_1y_2, x_3, \ldots, x_n)$ is irreducible.
\end{proof}
\begin{proof}[Theorem~\ref{thm:irreducible}]
Let $n\geq 3$ and assume that $P_{n-1}$ is irreducible over $\mathbb{Q}$.
Assume $P_n= fg$ with $f, g \in\mathbb{Q}[x_1, \ldots, x_n]$. We may assume 
$f$ is irreducible. Let $\phi_i: \mathbb{Q}[x_1, \ldots, x_n]
\longrightarrow\mathbb{Q}[\widehat{x_i}]$ be the evaluation at
$x_i=1$, that is $\phi_i(F)=F(x_1, \ldots, x_{i-1}, 1, x_{i+1}, \ldots, x_n)$.
Since $\phi_i$ is a $\mathbb{Q}$-algebra homomorphism for each 
$i\in[n]$ we have for $i=1$ that
\[
\phi_1(P_n)= \phi_1(fg)	= \phi_1(f) \phi_1(g)\in \mathbb{Q}[x_2, \ldots, x_n].
\]
But $\phi_1(P_n) = P_{n-1}(x_2, \ldots, x_n)^2 \in 
\mathbb{Q}[x_2, \ldots, x_n]$, which is a UFD.  By the inductive 
hypothesis, $P_{n-1}$ is irreducible in $\mathbb{Q}[x_2, \ldots, x_n]$.  
Therefore, $\phi_1(f) = P_{n-1} = \phi_1(g)$ (unless $f= P_n$, in which
case we are done since $f$ is irreducible).

Viewing $f,g \in \mathbb{Q}[x_1, \ldots, x_{n-1}][x_n]$, then since 
$P_n$ and $P_{n-1}$ are monic in every variable $x_i$ 
(and hence also in $x_n$,) we have
\[
\deg_{x_n}(f)	= \deg_{x_n}(g) = \frac{\deg_{x_n}(P_n)}{2} = 2^{n-3}.
\]
By symmetry of $P_n$ for $n \geq 3$, from Theorem \ref{thm:P_n=sym} and 
Theorem \ref{thm:recursion1} we have
\[
P_n= P_{n-1}(\EC_2(x_1, x_2), x_3, \ldots, x_n) 
P_{n-1}(\overline{\EC_2}(x_1, x_2), x_3, \ldots, x_n)
\]
in $\mathbb{Q}(x_1, x_2, y_1y_2)[x_3, \ldots, x_n]$, which is a UFD.
Since by assumption $P_n= fg$ where 
$f \in \mathbb{Q}[x_1, \ldots, x_n]$ is irreducible and 
$f |_{x_1=1} = \phi_1(f) = P_{n-1}(x_2, \ldots, x_n)$, which
by assumption is irreducible. That $f$ is also irreducible in 
$\mathbb{Q}(x_1, x_2, y_1y_2)[x_3, \ldots, x_n]$ can now be
seen in the same way as in the proof of Lemma~\ref{lmm:h*}:
namely, by evaluating at $x_1=1$ and obtain a factorization of 
$P_{n-1}(x_2, \ldots, x_n)$.
 
So we have 
\[
P_n = fg = P_{n-1}(\EC_2, x_3, \ldots, x_n) 
P_{n-1}(\overline{\EC_2}, x_3, \ldots, x_n)
\] 
in $\mathbb{Q}(x_1, x_2, y_1y_2)[x_3, \ldots, x_n]$, which is a UFD. 
Therefore we have 
\[
f \in \{ P_{n-1}(\EC_2, x_3, \ldots, x_n), 
P_{n-1}(\overline{\EC_2}, x_3, \ldots, x_n)\},
\]
By repeated application of Observation~\ref{obs:xi=1} we
obtain
\[
P_{n-1}(\EC_2, 1, \ldots, 1) = P_1(\EC_2)^{2^{n-2}} = (\EC_2 - 1)^{2^{n-2}}
\]
which is not contained in $\mathbb{Q}[x_1, \ldots, x_n]$. Similarly
$P_{n-1}(\overline{\EC_2}, x_3, \ldots, x_n)\not\in\mathbb{Q}[x_1, \ldots, x_n]$
and hence we have a contradiction, since $f \in \mathbb{Q}[x_1, \ldots, x_n]$.
\end{proof}
{\sc Remark:} Replacing $\mathbb{Q}$ with $\mathbb{C}$ in the previous proofs will yield the same result.

As a corollary we obtain the following, which in fact equivalent to 
Theorem \ref{thm:irreducible}:
\begin{corollary}
For $n\in\nats$ we have
$[\mathbb{Q}(x_1, \ldots, x_n, \EC_n) : \mathbb{Q}(x_1, \ldots, x_n)]=2^{n-1}$.

In fact, for any $m \leq n$ we have
$[\mathbb{Q}(x_1, \ldots, x_n, \EC_m) : \mathbb{Q}(x_1, \ldots, x_n)]=2^{m-1}$.
\end{corollary}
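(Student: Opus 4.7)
The plan is to exhibit $P_{n+1}(x_1,\ldots,x_n,T)$, viewed as an element of $\mathbb{Q}(x_1,\ldots,x_n)[T]$, as the minimal polynomial of $\EC_n$ over $\mathbb{Q}(x_1,\ldots,x_n)$, and to read off the degree from Theorem~\ref{thm:irreducible}. (The case $n=1$ is trivial since $\EC_1=x_1$.) Concretely, Corollary~\ref{cor:P_n} gives
\[
P_{n+1}(x_1,\ldots,x_n,T) = \prod_{\sigma \in G_n}\bigl(T - \sigma(\EC_n)\bigr),
\]
so this polynomial (i) vanishes at $\EC_n$ (take $\sigma=e$), (ii) is monic of degree $|G_n|=2^{n-1}$ in $T$ by Lemma~\ref{lmm:ZnZn-1}, and (iii) has coefficients in $\mathbb{Q}(x_1,\ldots,x_n)$, since they are $G_n$-invariant and that is by definition the fixed field of $G_n$.

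The substantive step is irreducibility over $\mathbb{Q}(x_1,\ldots,x_n)$. I would invoke Theorem~\ref{thm:irreducible}, which asserts that $P_{n+1}$ is irreducible in $\mathbb{Q}[x_1,\ldots,x_{n+1}]$, together with Corollary~\ref{cor:P_n-x_i}, which states that $P_{n+1}$ is monic (hence primitive) in $x_{n+1}$. A standard appeal to Gauss's lemma then transfers irreducibility to $\mathbb{Q}(x_1,\ldots,x_n)[x_{n+1}]$, so renaming $x_{n+1}$ to $T$ identifies $P_{n+1}(x_1,\ldots,x_n,T)$ as the minimal polynomial of $\EC_n$ and yields the first equation.

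For the general case $m \leq n$, my plan is to run the same argument with $m$ in place of $n$ to produce $P_{m+1}(x_1,\ldots,x_m,T)$ as the minimal polynomial of $\EC_m$ over $\mathbb{Q}(x_1,\ldots,x_m)$, of degree $2^{m-1}$. To transfer this to the larger base field $\mathbb{Q}(x_1,\ldots,x_n)$ I would observe that $x_{m+1},\ldots,x_n$ are algebraically independent over $\mathbb{Q}(x_1,\ldots,x_m)$ and do not occur in this minimal polynomial; a second Gauss-lemma argument applied in $\mathbb{Q}(x_1,\ldots,x_m)[T,x_{m+1},\ldots,x_n]$ then shows that $P_{m+1}(x_1,\ldots,x_m,T)$ stays irreducible in $\mathbb{Q}(x_1,\ldots,x_n)[T]$, preserving the degree $2^{m-1}$. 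I do not anticipate a genuine obstacle here: all the substantive content is already packaged in Theorem~\ref{thm:irreducible} and Lemma~\ref{lmm:ZnZn-1}, and the only care required lies in the two clean invocations of Gauss's lemma used to pass between a polynomial ring and its field of fractions.
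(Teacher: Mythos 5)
Your proposal is correct and follows the route the paper itself intends: the paper states this corollary without a written proof, remarking only that it is ``equivalent to Theorem~\ref{thm:irreducible}'', and its subsequent summarizing corollary identifies $P_{m+1}(x_1,\ldots,x_m,X)$ as the minimal polynomial of $\EC_m$ over $\mathbb{Q}(x_1,\ldots,x_m)$ --- exactly the fact you establish from Corollary~\ref{cor:P_n}, Lemma~\ref{lmm:ZnZn-1}, and Theorem~\ref{thm:irreducible}. Your two explicit Gauss-lemma steps (passing from $\mathbb{Q}[x_1,\ldots,x_n][x_{n+1}]$ to $\mathbb{Q}(x_1,\ldots,x_n)[x_{n+1}]$, and then extending the base field by the algebraically independent $x_{m+1},\ldots,x_n$ for $m<n$) correctly supply the details the paper leaves implicit.
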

we conclude this section with a summarizing result:
\begin{corollary}
For $1 \leq m \leq n$ we have
\begin{itemize}
\item $\mathbb{Q}(x_1, \ldots, x_n, \EC_m)= \mathbb{Q}(x_1, 
\ldots, x_n, y_1y_2, \ldots, y_{m-1}y_m)$
\item \ \ \ $\Gal(\mathbb{Q}(x_1, \ldots, x_n, \EC_m)/\mathbb{Q}(x_1, 
\ldots, x_n))$\\
$= \Gal(\mathbb{Q}(x_1, \ldots, x_n, y_1y_2, 
\ldots, y_{m-1}y_m)/\mathbb{Q}(x_1, \ldots, x_n))$\\
$ \cong  \mathbb{Z}_2^{m-1}$.
\item $P_{m+1}(x_1, \ldots, x_m, X) \in \mathbb{Q}(x_1, \ldots, x_m)[X]$ 
is the minimal polynomial of\\
$\EC_m = \EC_m(x_1, \ldots, x_m)$ over $\mathbb{Q}(x_1, \ldots, x_m)$.
\end{itemize}
\end{corollary}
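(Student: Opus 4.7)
The plan is to establish the three bullets in the order (iii), (i), (ii); the degree fact $[\mathbb{Q}(x_1,\ldots,x_n,\EC_m):\mathbb{Q}(x_1,\ldots,x_n)]=2^{m-1}$ from the immediately preceding corollary is the pivot on which everything turns.

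For bullet (iii) I would start from the factorization
\[
P_{m+1}(x_1,\ldots,x_m,X) \;=\; \prod_{\sigma\in G_m}\bigl(X-\sigma(\EC_m)\bigr)
\]
supplied by Corollary~\ref{cor:P_n}. This is monic in $X$ of degree $2^{m-1}$ (Corollary~\ref{cor:P_n-x_i}), and the factor with $\sigma=e$ exhibits $\EC_m$ as a root. Since the preceding corollary already guarantees that $\EC_m$ has degree exactly $2^{m-1}$ over $\mathbb{Q}(x_1,\ldots,x_n)$, and \emph{a fortiori} over $\mathbb{Q}(x_1,\ldots,x_m)$, any monic polynomial of that degree in $\mathbb{Q}(x_1,\ldots,x_m)[X]$ having $\EC_m$ as a root must be the minimal polynomial; this delivers (iii) with no further work.

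For bullet (i) I would first obtain the containment
\[
\mathbb{Q}(x_1,\ldots,x_n,\EC_m) \;\subseteq\; \mathbb{Q}(x_1,\ldots,x_n,y_1y_2,\ldots,y_{m-1}y_m)
\]
from Lemma~\ref{lmm:sum}: every term of $\EC_m$ carries an even number of sine factors, so $\EC_m$ is polynomial in $x_1,\ldots,x_m$ and in products $y_iy_j$ with $1\le i<j\le m$, and each such $y_iy_j$ reduces to a rational function in $x_1,\ldots,x_m$ and the adjacent products $y_\ell y_{\ell+1}$ via the telescoping identity used in the proof of Lemma~\ref{lmm:ZnZn-1}. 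To promote the inclusion to an equality I would compare degrees over $\mathbb{Q}(x_1,\ldots,x_n)$: the left-hand field has degree $2^{m-1}$ by the previous corollary, and the right-hand field has the same degree by the tower argument of Lemma~\ref{lmm:ZnZn-1}, because the extra transcendentals $x_{m+1},\ldots,x_n$ do not alter the minimal polynomials $X^2-(1-x_i^2)(1-x_{i+1}^2)$ of the successive $y_iy_{i+1}$. Bullet (ii) follows at once: equal fields have equal Galois groups over $\mathbb{Q}(x_1,\ldots,x_n)$, and this common group is identified as $\mathbb{Z}_2^{m-1}$ by the same tower.

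The most delicate step is the transcendental-extension compatibility invoked for the right-hand degree computation: one must check that enlarging the base from $\mathbb{Q}(x_1,\ldots,x_m)$ to $\mathbb{Q}(x_1,\ldots,x_n)$ by adjoining the algebraically independent $x_{m+1},\ldots,x_n$ leaves the tower of quadratic extensions and its Galois group unchanged. This is standard, and with it in place the rest of the argument is essentially bookkeeping against results already established in the paper.
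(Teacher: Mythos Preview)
Your proposal is correct and follows exactly the line the paper intends: the paper states this corollary as a ``summarizing result'' with no proof, treating each bullet as a direct consequence of the preceding corollary on degrees, Corollary~\ref{cor:P_n}, and Lemma~\ref{lmm:ZnZn-1}. Your argument makes these implicit deductions explicit, and the one step you flag as delicate (that adjoining the algebraically independent $x_{m+1},\ldots,x_n$ to the base does not disturb the quadratic tower) is precisely the content of the ``In fact, for any $m\le n$'' clause in the preceding corollary, so you are invoking nothing beyond what the paper already asserts.
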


\section{Rational solutions for $n=3$ and Descartes' circle theorem}
\label{sec:n=3}

Here we deal with the special case of $n=3$, and we characterize all rational 
solutions for flowers with three petals, thereby obtaining all rational
radii of four mutually tangent Soddy circles. 
We then compare our parametrization to an existing parametrization of the 
curvatures of four mutually tangent circles and show how our equation-free 
parameterization is an improvement on the existing one.

In general, to find all integer-radii coins forming an $n$-petal flower in the Euclidean plane, 
it is equivalent by scaling, to find all rational radii
coins where the center coin is assumed to have radius
one. Note that if the lengths of the sides of a triangle are 
rational, then the cosines of all its angles will be rational.
The converse is not necessarily 
true, however. We will solve $P_3=0$ over the rationals and
use that to find rational radii that create a 3-petal flower,
that is, rational Soddy circles.
For a necessary first step, we will determine what the cosines must be.

\subsection{Rational solutions of $P_3=0$ and rational Soddy circles}

First, we have the irreducible polynomial 
$P_3 = x_1^2+x_2^2+x_3^2 -2x_1x_2x_3-1$.  We can solve for any one of the 
variables, say $x_3$, by definition of $P_3$ and Lemma~\ref{lmm:ECnECn-1}: 
\begin{equation}
\label{eqn:x3}
x_3 = \EC_2(x_1, x_2)  =  x_1 x_2 - \sqrt{(1-x_1^2)(1-x_2^2)}.
\end{equation}
For rational $x_1$ and $x_2$ it is clear that $x_3$ will be rational 
if and only if the term under the radical is the square of a rational number.
For $i=1,2$ let $x_i = \frac{p_i}{q_i}$ for with $p_i, q_i \in \mathbb{Z}$.  
By (\ref{eqn:x3}) we then obtain 
\[
x_3 = x_1 x_2 - \frac{1}{q_1 q_2}\sqrt{(q_1^2-p_1^2)(q_2^2-p_2^2)}.
\]
Here $(q_1^2-p_1^2)(q_2^2-p_2^2)$ is a square if and only if
$q_i^2-p_i^2 = s_i^2 \beta$ for $i=1,2$ where $\beta$ is 
square-free integer. Here we need the following
result in elementary number theory:
\begin{theorem}
\label{thm:pyth_ext}
Let $\beta$ be a square-free integer.  The integers $x,y,z$ form a 
primitive solution to the Diophantine equation $x^2 + \beta y^2 = z^2$ 
if and only if there are positive integers $m$ and $n$ and a factorization 
$\beta= b c$ where $b m^2$ and $c n^2$ are relatively prime such that
$x = \frac{b m^2-c n^2}{2}$, $y =  mn$, $z = \frac{b m^2+c n^2}{2}$,
where both $m$ and $n$ are odd or both are even, or
$x=b m^2-c n^2$, $y=2mn$, $z =b m^2+c n^2$ otherwise.
\end{theorem}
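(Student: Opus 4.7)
The plan is to prove both directions of this generalization of the Pythagorean triple parametrization. The forward direction (that the stated formulas produce primitive solutions) is a direct algebraic check: in the first case
\[
z^2 - x^2 = \frac{(bm^2+cn^2)^2 - (bm^2-cn^2)^2}{4} = bc\,m^2n^2 = \beta y^2,
\]
and an analogous expansion handles the second case. Primitivity of the triple follows from the hypothesis $\gcd(bm^2,cn^2)=1$, since any common prime divisor of $x,y,z$ would have to divide both $z+x$ and $z-x$, forcing it into $\gcd(bm^2,cn^2)$ (after clearing the factor of $2$ in the first case).

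For the converse, suppose $(x,y,z)$ is a primitive solution. The key first observation is that primitivity together with squarefreeness of $\beta$ forces $\gcd(x,z)=1$: if a prime $p$ divided both $x$ and $z$, then $p^2 \mid \beta y^2$, and since $\beta$ is squarefree this gives $p \mid y$, contradicting $\gcd(x,y,z)=1$. Writing $\beta y^2=(z-x)(z+x)$ and setting $d=\gcd(z-x,z+x)$, one then has $d \mid 2\gcd(x,z)=2$, so $d\in\{1,2\}$.

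In the case $d=1$, every prime divisor of $\beta$ must lie in exactly one of the factors $z\pm x$, and so putting $b=\gcd(\beta,z-x)$ and $c=\gcd(\beta,z+x)$ gives a factorization $\beta=bc$ with $\gcd(b,c)=1$. Writing $z-x=bu$, $z+x=cv$ yields $uv=y^2$ with $\gcd(u,v)=1$, hence $u=m^2$ and $v=n^2$ for positive integers $m,n$. Solving for $x$ and $z$ reproduces the first set of formulas, and integrality of $x,z$ (with $z\pm x$ both odd here) forces $bm^2$ and $cn^2$ to have the same parity, i.e.\ $m$ and $n$ are both odd or both even. In the case $d=2$, both $z-x$ and $z+x$ are even; since $\gcd(x,z)=1$ this means $x,z$ are both odd, and comparing $2$-adic valuations in $\beta y^2=(z-x)(z+x)$ forces $y$ to be even. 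Writing $z-x=2U$, $z+x=2V$ with $\gcd(U,V)=1$ gives $UV=\beta(y/2)^2$, and the same coprime-factorization argument produces $U=bm^2$, $V=cn^2$ with $y/2=mn$ and $\gcd(bm^2,cn^2)=1$, matching the second set of formulas.

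The main obstacle is the parity bookkeeping in Case $d=2$: one must verify that precisely one of the two parametrizations applies to a given primitive solution, handling the mild subtleties introduced when $\beta$ itself is even or $\equiv 2\pmod 4$. Once the case split above is carried out carefully and the coprimality of $bm^2$ and $cn^2$ is traced back to $\gcd(u,v)=1$ (respectively $\gcd(U,V)=1$), both directions combine to give the stated bijection between primitive solutions and the parameter data $(b,c,m,n)$.
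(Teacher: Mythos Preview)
Your proof is correct and follows essentially the same route as the paper's. The only organizational difference is that you split into cases according to $d=\gcd(z-x,z+x)\in\{1,2\}$, whereas the paper splits according to the parities of $x$ and $y$; these case splits are equivalent (the paper's Cases~1--2 are your $d=1$, its Case~3 is your $d=2$), and both then invoke the same ``coprime factors of a square are squares'' step (the paper's Claim~\ref{claim:rel-prime}).
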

For a proof of Theorem~\ref{thm:pyth_ext}, see Appendix~\ref{appx:beta}.

Since $x_i = \frac{p_i}{q_i}$ for $i=1,2$ we have by 
Theorem \ref{thm:pyth_ext} that
\begin{equation}
\label{eqn:x1x2}
x_1 = \frac{b_1 m_1^2-c_1 n_1^2}{b_1 m_1^2+c_1 n_1^2}, \ \  
x_2 = \frac{b_2 m_2^2-c_2 n_2^2}{b_2 m_2^2+c_2 n_2^2},
\end{equation}
where $\beta = b_1 c_1 = b_2 c_2$ are two (not necessarily distinct) factorizations of the square-free 
integer $\beta$, and where $m_i, n_i$ can be chosen from the nonnegative 
integers.  

Suppose we have a 3-petal flower whose internal angles are 
$\theta_1, \theta_2, \theta_3$ and their cosines are $x_1,x_2,x_3$
respectively. By scaling, we assume the radius of the center coin
to be one and the other three outer radii $r_1, r_2$ and $r_3$.
By the law of cosines, we obtain
\[
x_1 = \frac{r_1 + r_2 -r_1r_2 +1}{r_1 + r_2 + r_1 r_2 + 1}, \ \ 
x_2 = \frac{r_2 + r_3 -r_2r_3 +1}{r_2 + r_3 + r_2 r_3 + 1}, \ \ 
x_3 = \frac{r_3 + r_1 -r_3r_1 +1}{r_3 + r_1 + r_3 r_1 + 1}.
\]
Rewriting each equation for $x_i$ as a polynomial equation in 
terms of $r_i$ and $r_{i+1}$ ( where $4 \equiv 1$ modulo 3) and then
factoring in terms of $r_i$ and $r_{i+1}$ we obtain
\begin{eqnarray*}
\left(r_1 + \frac{x_1-1}{x_1+1} \right) 
\left(r_2 + \frac{x_1-1}{x_1+1} \right) & = & \frac{2(1-x_1)}{(x_1+1)^2},\\
\left(r_2 + \frac{x_2-1}{x_2+1} \right) 
\left(r_3 + \frac{x_2-1}{x_2+1} \right) & = & \frac{2(1-x_2)}{(x_2+1)^2},\\
\left(r_3 + \frac{x_3-1}{x_3+1} \right) 
\left(r_1 + \frac{x_3-1}{x_3+1} \right) & = & \frac{2(1-x_3)}{(x_3+1)^2}.
\end{eqnarray*}
Now we can solve the first and third equations for $r_2$ and $r_3$ 
respectively in terms of $r_1, x_1, x_3$.  Substituting these into the 
second equation, we can then solve that for $r_1$ in terms of $x_1, x_2, x_3$ 
obtaining
\begin{equation}
\label{eqn:r1}
r_1 = \frac{-1-x_1 x_3+x_3+x_1 \pm 
\sqrt{2(1-x_1)(1-x_2)(1-x_3)}}{2 x_2-x_1+x_1 x_3-1-x_3}.
\end{equation}
Putting $x_1$ and $x_2$ from (\ref{eqn:x1x2}) into 
(\ref{eqn:x3}) we obtain 
\[
x_3= x_1 x_2 - \sqrt{(1-x_1^2)(1-x_2^2)}= \frac{\left(b_1 m_1^2-c_1 n_1^2\right)
\left(b_2 m_2^2-c_2 n_2^2\right) -  4 m_1 m_2 n_1 n_2 \beta}{(b_1 m_1^2+
c_1 n_1^2)(b_2 m_2^2+c_2 n_2^2)}.
\]
Substituting this expressions for $x_3$ and those of $x_1$
and $x_2$ from (\ref{eqn:x1x2})
into (\ref{eqn:r1}), we get an expression for $r_1$ in terms of 
$b_1, b_2, c_1, c_2, m_1, m_2, n_1, n_2$:
\begin{eqnarray*}
r_1 
& = & 
\frac{n_1(b_2 c_1^2 m_2^2 n_1^3  +2\beta c_1  m_1 m_2 n_1^2 n_2 +
b_1 c_1 c_2 m_1^2 n_1  n_2^2 )}{b_1 c_1 c_2 m_1^2 n_1^2 n_2^2  
-b_2 c_1^2 m_2^2  n_1^4+c_1^2 c_2 n_1^4 n_2^2 -2 \beta c_1 m_1 m_2 n_1^3 n_2 
+b_1^2 c_2 m_1^4n_2^2 } \\ \\
& \pm & 
\frac{ n_1 n_2 (b_1 m_1^2+c_1 n_1^2) 
\sqrt{c_1 c_2 (b_1 c_2 m_1^2 n_2^2+2 \beta m_1 m_2  n_1n_2 +
b_2c_1 m_2^2 n_1^2  )}}{b_1c_1 c_2 m_1^2 n_1^2 n_2^2 -b_2 c_1^2 m_2^2 n_1^4+
c_1^2 c_2 n_1^4 n_2^2 -2 \beta c_1 m_1 m_2 n_1^3 n_2 +b_1^2 c_2 m_1^4 n_2^2 }.
\end{eqnarray*}
Using the fact that $\beta = b_1 c_1 = b_2 c_2$, the expression under
the square root can be reduced to $\beta (c_2 m_1 n_2 + c_1 m_2 n_1)^2$.
Thus this expression for $r_1$ will only yield a perfect square 
when $\beta = 1$. Therefore, $r_1$ is rational if and only if 
$q_i^2-p_i^2 = s_i^2$ for $i=1,2$, or in other words when 
$1-x_i^2$ is a perfect square for $i=1,2$. This means that both 
$\cos\theta_i$ and $\sin\theta_i$ are rational for $i=1,2,3$.
\begin{proposition}
\label{prp:cos-sin-rat}
The 3-petaled flower with the center coin of radius one 
can have the outer coins of rational radii $r_1, r_2, r_3$
if and only if the internal angles $\theta_1, \theta_2, \theta_3$
have both rational cosines and sines for $i=1,2,3$.
\end{proposition}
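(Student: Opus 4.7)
The plan is to prove the two directions separately, since essentially all of the heavy lifting has been done in the calculation immediately preceding the proposition; all that remains is bookkeeping.

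For the ``only if'' direction, assume $r_1,r_2,r_3 \in \mathbb{Q}$ with the center coin of radius $1$. Applying the law of cosines~(\ref{eqn:coslaw}) to each of the three triangles at the center coin gives immediately that $x_i = \cos\theta_i \in \mathbb{Q}$ for $i=1,2,3$. To promote this to rationality of the sines, substitute the rational values of $r_1, x_1, x_2, x_3$ into~(\ref{eqn:r1}) and observe that the radical $\sqrt{2(1-x_1)(1-x_2)(1-x_3)}$ must itself be rational. Parametrizing $x_1$ and $x_2$ as in~(\ref{eqn:x1x2}) with a common square-free integer $\beta$ (Theorem~\ref{thm:pyth_ext}), the computation preceding the proposition reduces the quantity under this radical to $\beta\,(c_2 m_1 n_2 + c_1 m_2 n_1)^2$ up to a rational square factor, forcing $\beta=1$. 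Hence $1-x_i^2$ is a rational square for $i=1,2$, so $\sin\theta_1,\sin\theta_2 \in \mathbb{Q}$. For the third sine, use $\theta_3 = 2\pi - \theta_1 - \theta_2$ to get
\[
\sin\theta_3 \;=\; -\sin(\theta_1+\theta_2) \;=\; -\sin\theta_1\cos\theta_2 - \cos\theta_1\sin\theta_2 \;\in\; \mathbb{Q}.
\]

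For the ``if'' direction, suppose that $x_i$ and $y_i = \sin\theta_i$ are rational for every $i$. Then $1-x_i^2 = y_i^2$ is already a rational square for each $i$, which corresponds to $\beta=1$ in the parametrization; by the same identity the radical in~(\ref{eqn:r1}) is rational, so $r_1 \in \mathbb{Q}$. Cyclic relabelling of the three petals yields analogous rational expressions for $r_2$ and $r_3$.

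The main obstacle, such as it is, is that the parametric setup treats $x_1,x_2$ as the ``base'' variables with $x_3 = \EC_2(x_1,x_2)$ and singles out $r_1$ as the dependent radius, so one must recover rationality of $\sin\theta_3$ indirectly (via the angle-sum identity) rather than from the parametrization itself. Once that cosmetic asymmetry is resolved, no new technical content is needed beyond the identity $c_1 c_2(b_1 c_2 m_1^2 n_2^2 + 2\beta m_1 m_2 n_1 n_2 + b_2 c_1 m_2^2 n_1^2) = \beta\,(c_2 m_1 n_2 + c_1 m_2 n_1)^2$ already recorded.
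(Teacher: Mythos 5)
Your proposal is correct and follows essentially the same route as the paper: rationality of the cosines from the law of cosines, then the parametrization via Theorem~\ref{thm:pyth_ext} and the reduction of the radicand in~(\ref{eqn:r1}) to $\beta(c_2 m_1 n_2 + c_1 m_2 n_1)^2$, forcing $\beta=1$ and hence rational sines, with the converse read off from the same formula. The only addition is your explicit angle-sum argument for $\sin\theta_3$, a detail the paper leaves implicit.
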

Proposition~\ref{prp:cos-sin-rat} shows a property
that is very special for the $n$-petal flower with
rational radii when $n=3$. We now can write a ``nice''
parametrization for the cosines $x_i$ and the radii $r_i$ 
in the case when $n=3$.  Let $m_1, n_1, m_2, n_2 \in \mathbb{N}$.  
Then
\begin{equation}
\label{eqn:xi-nice}
x_1=\frac{m_1^2-n_1^2}{m_1^2+ n_1^2}, \ \
x_2=\frac{m_2^2-n_2^2}{m_2^2+ n_2^2}, \ \  
x_3=\frac{\left(m_1^2-n_1^2\right)\left(m_2^2-n_2^2\right) -  
4 m_1 m_2 n_1 n_2}{(m_1^2+n_1^2)(m_2^2+n_2^2)}.
\end{equation} 
Putting these into (\ref{eqn:r1}) and the similar
equations for $r_2$ and $r_3$ we obtain the rational
forms for $r_1,r_2,r_3$ that cointain all rational radii
for the outer coins of a 3-petal flower with center coin
of radius one:
\begin{eqnarray*}
r_1 & = & \frac{n_1(m_1 n_2 + m_2 n_1)}{ - m_1^2n_2 - n_1^2 n_2 
\pm (m_1 n_1 n_2  +m_2 n_1^2 )}\\
r_2	& = & \frac{n_1 n_2}{ - n_1 n_2 \pm (m_2 n_1+  m_1n_2)}\\
r_3 & = & \frac{n_2(m_1n_2 +m_2n_1 )}{ - m_1 n_2^2  -m_2 n_1n_2  
\pm(n_1n_2^2 +m_2^2 n_1 )}.
\end{eqnarray*}
We will determine which range of the parameters will yield
meaningful solutions in what follows as well as
the signs of the terms in the denominator.
\begin{observation}
\label{obs:90180}
If $\theta_1, \theta_2, \theta_3$ are the internal angles 
of a 3-petaled flower, then $90^{\circ} < \theta_i < 180^{\circ}$ 
for each $i=1,2,3$ and these three inequalities 
are all sharp.
\end{observation}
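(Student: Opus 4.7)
The plan is to dispatch the two inequalities separately: the upper bound $\theta_i<180^\circ$ is a one-line triangle inequality, while the lower bound $\theta_i>90^\circ$ really does use the flower equation~(\ref{eqn:2pi}).

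For $\theta_i<180^\circ$, equation~(\ref{eqn:coslaw}) realises $\theta_i$ as an angle in the triangle with sides $r+r_i,\ r+r_{i+1},\ r_i+r_{i+1}$; since $(r+r_i)+(r+r_{i+1})-(r_i+r_{i+1})=2r>0$ the strict triangle inequality holds and the triangle is non-degenerate, so $\theta_i\in(0,\pi)$. For sharpness, I would exhibit a one-parameter symmetric family with $r_i=r_{i+1}=R$ and the third radius determined by~(\ref{eqn:2pi}); a short calculation shows that the third radius tends to a finite limit as $R\to\infty$, and substituting into~(\ref{eqn:coslaw}) gives $\cos\theta_i\to-1$, so $\theta_i\to\pi$.

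For $\theta_i>90^\circ$ the key move is a substitution. Scaling so that $r=1$, set $V_i:=1+1/r_i$ (so that $V_i>1\iff r_i>0$) and $a_i:=V_iV_{i+1}-1$. Dividing numerator and denominator of~(\ref{eqn:coslaw}) by $r_ir_{i+1}$ yields $\cos\theta_i=(a_i-1)/(a_i+1)$, so $\theta_i>90^\circ$ is equivalent to $a_i<1$; the half-angle formula then gives $\tan(\theta_i/2)=1/\sqrt{a_i}$. Since the halved angles sum to $\pi$, the identity $\tan A+\tan B+\tan C=\tan A\tan B\tan C$ (valid whenever $A+B+C=\pi$) converts~(\ref{eqn:2pi}) into the symmetric algebraic equation
\[
\sqrt{a_1a_2}+\sqrt{a_2a_3}+\sqrt{a_1a_3}=1.
\]

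The remainder is a short contradiction argument. Suppose $a_1\ge 1$. Then $\sqrt{a_1a_j}\ge\sqrt{a_j}$ for $j=2,3$, so the displayed equation forces $(1+\sqrt{a_2})(1+\sqrt{a_3})\le 2$, giving $a_2,a_3\in(0,1]$; since $a\le\sqrt{a}$ on $[0,1]$ this upgrades termwise to $(1+a_2)(1+a_3)\le 2$. But $(1+a_2)(1+a_3)=(V_2V_3)(V_3V_1)=(1+a_1)V_3^2\ge 2V_3^2$, forcing $V_3\le 1$ and contradicting $V_3>1$. By the cyclic symmetry of the setup, $a_i<1$ for every $i$, so $\theta_i>90^\circ$. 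Sharpness of the lower bound is delivered by the same $R\to\infty$ family, in which the two angles adjacent to the remaining petal both tend to $90^\circ$. The main obstacle is the lower bound, and within it the essential idea is the substitution $a_i=V_iV_{i+1}-1$: it simultaneously rationalises $\cos\theta_i$ and, via the tangent-sum identity for $\pi$, collapses the transcendental constraint~(\ref{eqn:2pi}) into the clean symmetric equation above, after which the remaining inequality step is elementary.
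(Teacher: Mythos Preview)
Your proof is correct, and your route to the lower bound $\theta_i>90^\circ$ is genuinely different from the paper's.

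The paper argues geometrically: fixing $r_1,r_2$ and sending $r_3\to\infty$, it asserts (without a detailed justification) that the central radius $r$ increases and hence $\theta_1$ decreases monotonically to its infimum; it then computes the limiting configuration (third circle replaced by a tangent line), obtains $r=r_1r_2/(\sqrt{r_1}+\sqrt{r_2})^2$ explicitly, and verifies $(r_1+r_2)^2>(r+r_1)^2+(r+r_2)^2$ by a direct expansion. Your approach bypasses the limiting/monotonicity step entirely: the substitution $V_i=1+1/r_i$, $a_i=V_iV_{i+1}-1$ together with the tangent identity for $A+B+C=\pi$ collapses the transcendental constraint~(\ref{eqn:2pi}) to the algebraic relation $\sqrt{a_1a_2}+\sqrt{a_2a_3}+\sqrt{a_3a_1}=1$, after which the contradiction $(1+a_2)(1+a_3)=(1+a_1)V_3^2\ge 2V_3^2$ versus $(1+a_2)(1+a_3)\le 2$ is clean and fully rigorous. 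What the paper's argument buys is a concrete picture of the extremal configuration; what yours buys is a self-contained proof with no unproved monotonicity, plus an explicit algebraic reformulation of the flower equation that could be useful elsewhere. Your sharpness argument via the symmetric family $r_i=r_{i+1}=R\to\infty$ agrees with the paper's; a line computing that $r_3\to 1$ in this limit (so the family stays non-degenerate) would round it out, but this is minor.
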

\begin{proof}
As each $\theta_i$ is an angle in a triangle formed by 
the mutually touching three coins, we have that
$\theta_i < 180^{\circ}$. On the other hand, keeping
the radius of the center coin fixed (say, at $r=1$) and
letting $r_i = r_{i+1} \rightarrow\infty$, we see
that $\theta_i\rightarrow 180^{\circ}$ from below. We also see
from this scenario that the other two angles tend
to $90^{\circ}$ from below.

What remains to show is that $\theta_i > 90^{\circ}$ for each $i$.
It suffices to show this for $i=1$. By keeping the 
radii $r_1$ and $r_2$ fixed and letting $r_3 \rightarrow \infty$, the 
radius $r$ of the central coin will 
increase and $\theta_1$, the angle between the first and second coins, will 
decrease. Figure~\ref{90_180} illustrates this situation.
\begin{figure}[tb]
\centering
%\begin{center}
%\leavevmode
\includegraphics[scale=1]{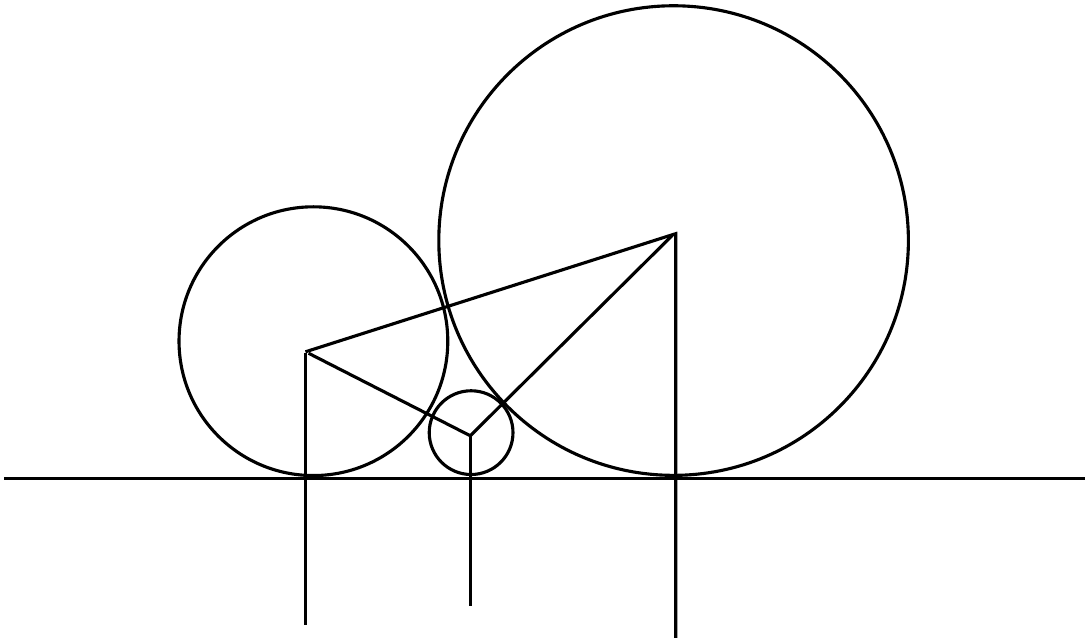}
\caption{A 3-petaled flower where the radius of one petal is 
increased to infinity.}
\label{90_180}
%\end{center}
\end{figure}
It suffices to show that $\theta_1 > 90^{\circ}$ for this case. If we start 
with Figure~\ref{90_180} and draw a line parallel to the infinite circle 
that goes through the center of the central coin, we have 2 right triangles 
with side lengths $r_i-r, r_i + r$ and, by the Pythagorean theorem, 
$2 \sqrt{r_i r}$ for each $i= 1,2$. Therefore, the length of the 
segment forming the bottom of the rhombus, formed by the center
of the two outer circles and their touching points to the infinite
circle, is $2\left( \sqrt{r_1 r} + \sqrt{r_2 r} \right)$.  
We can now draw a segment parallel to this segment and passing through the 
center of the coin with the smaller radius.  Without loss of generality
we may assume $r_1 \leq r_2$. Now we have a right triangle with side lengths 
$2\left( \sqrt{r_1 r} + \sqrt{r_2 r} \right), r_2-r_1$ and $r_1 + r_2$
and hence by the Pythagorean theorem we have
$4\left( \sqrt{r_1 r} + \sqrt{r_2 r} \right)^2 + (r_2-r_1)^2= (r_1 + r_2)^2$,
which can be solved for $r$, obtaining 
\[
r = \frac{r_1 r_2}{\left(\sqrt{r_1} + \sqrt{r_2} \right)^2}.
\]
With this expression for $r$, it suffices to show that
$(r_1 + r_2)^2 > (r+r_1)^2 + (r+r_2)^2$, 
which implies $\theta_1 > 90^{\circ}$:
\begin{eqnarray*}
&   & (r_1 + r_2)^2 \\	
& = & \frac{(r_1 + r_2)^2\left(\sqrt{r_1} 
+ \sqrt{r_2} \right)^4}{\left(\sqrt{r_1} + \sqrt{r_2} \right)^4}\\
& = & \frac{r_1^4+r_2^4+
\left(14r_1^2 r_2^2+8r_1^3 r_2+12 r_1^{5/2} r_2^{3/2}+
12r_1^{3/2} r_2^{5/2}+8r_1 r_2^3\right) 
+4\left( r_1^{7/2} \sqrt{r_2}+\sqrt{r_1} r_2^{7/2}\right)}{\left(\sqrt{r_1} 
+ \sqrt{r_2} \right)^4}\\
& > & \frac{r_1^4+r_2^4+
8\left(r_1^2 r_2^2+ r_1^3 r_2+ r_1^{5/2} r_2^{3/2}+ 
r_1^{3/2} r_2^{5/2}+ r_1 r_2^3\right)
+4 \left( r_1^{7/2} \sqrt{r_2}+ \sqrt{r_1}r_2^{7/2}\right) }{\left(\sqrt{r_1} 
+ \sqrt{r_2} \right)^4}\\
%& = & \frac{2r_1^2 r_2^2 + (2r_1^2 r_2 + 2r_1 r_2^2)\left(\sqrt{r_1} 
%+ \sqrt{r_2} \right)^2 +(r_1^2 + r_2^2)\left(\sqrt{r_1} 
%+ \sqrt{r_2} \right)^4 }{\left(\sqrt{r_1} + \sqrt{r_2} \right)^4}\\	
%& = & \left( \frac{r_1 r_2 + r_1 \left(\sqrt{r_1} 
%+ \sqrt{r_2} \right)^2}{\left(\sqrt{r_1} 
%+ \sqrt{r_2} \right)^2}\right)^2 + \left( \frac{r_1 r_2 
%+ r_2 \left(\sqrt{r_1} + \sqrt{r_2} \right)^2}{\left(\sqrt{r_1} 
%+ \sqrt{r_2} \right)^2}\right)^2\\
& = & \left( \frac{r_1 r_2}{\left(\sqrt{r_1} + \sqrt{r_2} \right)^2} 
+ r_1\right)^2 + \left( \frac{r_1 r_2}{\left(\sqrt{r_1} + \sqrt{r_2} \right)^2} 
+ r_2\right)^2\\
& = & (r+r_1)^2 + (r+r_2)^2.
\end{eqnarray*}
\end{proof}
By Observation~\ref{obs:90180} we now know that for all the angles 
$\theta_i$, we have $90^{\circ} < \theta_i < 180^{\circ}$, and hence 
$-1 < \cos\theta_i < 0$. So in the parameterization of $x_1$ and $x_2$
\[
x_1 =  \frac{m_1^2-n_1^2}{m_1^2+ n_1^2}, \ \ 
x_2 =  \frac{m_2^2-n_2^2}{m_2^2+ n_2^2},
\]
we must choose $n_i > m_i$.  In this case $x_3$ in (\ref{eqn:x3})
must satisfy 
$\left(m_1^2-n_1^2\right)\left(m_2^2-n_2^2\right) -  4 m_1 m_2 n_1 n_2<0$, 
which is equivalent to $(m_1 n_2 +  m_2 n_1)^2>(m_1 m_2 - n_1 n_2)^2$.
Since $m_i < n_i$ this is equivalent to 
$m_1 n_2 + m_2	n_1 > n_1 n_2 - m_1 m_2$, 
or equivalently 
\begin{equation}
\label{eqn:x3-cnstrt}
m_1 n_2 +m_2 n_1 + m_1 m_2 > n_1 n_2.
\end{equation}
Looking at the expression for $r_1$, 
\[
r_1 = \frac{n_1(m_1 n_2 + m_2 n_1)}{ - m_1^2 n_2- n_1^2 n_2 
\pm (m_1 n_1 n_2  +m_2 n_1^2 )},
\]
we see that in order for $r_1 > 0$ to hold we must
have $n_1(m_1 n_2+m_2 n_1 )>n_2(m_1^2+ n_1^2)$.  
Re-solving for the radii $r_2$ and $r_3$ using 
the positive term in the expression for $r_1$, we obtain:
\[
r_2= \frac{n_1 n_2}{   m_2 n_1+ m_1n_2 - n_1 n_2}, \ \ 
r_3= \frac{n_2(m_1 n_2 +m_2 n_1 )}{ n_1 n_2^2 + m_2^2 n_1 - m_1 n_2^2 
-m_2 n_1n_2  },
\]
which give us two additional constraints in order to assure positive radii: 
$m_2 n_1+ m_1 n_2 > n_1 n_2$ and $n_1(m_2^2+ n_2^2)> n_2(m_1 n_2 +m_2 n_1)$.  
Note that the first of these constraints is stronger than 
(\ref{eqn:x3-cnstrt}), and so will replace it in the following summarizing 
theorem:
\begin{theorem}
\label{thm:param}
Let $m_1, n_1, m_2, n_2 \in \mathbb{N}$ that satisfy 
$n_1>m_1$, 
$n_2>m_2$,  
$m_1 n_2 + m_2 n_1 > n_1 n_2$, 
$n_1 (m_1 n_2 + m_2 n_1)> n_2 (m_1^2 + n_1 ^2)$, 
and $n_1 (m_2^2 + n_2^2)> n_2 (m_1 n_2 + m_2 n_1 )$.  
Then all rational cosines $x_i$ of a 3-petal flower 
are parametrized by:
\[
x_1=\frac{m_1^2-n_1^2}{m_1^2+ n_1^2}, \ \
x_2=\frac{m_2^2-n_2^2}{m_2^2+ n_2^2}, \ \ 
x_3=\frac{\left(m_1^2-n_1^2\right)\left(m_2^2-n_2^2\right) 
-  4 m_1 m_2 n_1 n_2}{(m_1^2+n_1^2)(m_2^2+n_2^2)}.
\]
Assuming the center coin has radius one, then all the
rational radii $r_i$ of the outer coins
are paramtetrized by:
\begin{eqnarray*}
r_1 & = & \frac{n_1(m_1 n_2 + m_2 n_1)}{  m_1 n_1 n_2  +m_2 n_1^2 
- m_1^2 n_2- n_1^2 n_2 }\\
r_2	& = & \frac{n_1 n_2}{   m_2 n_1+ m_1 n_2 - n_1 n_2}\\
r_3 & = & \frac{n_2(m_1n_2 +m_2n_1 )}{ n_1 n_2^2 + m_2^2n_1 - m_1n_2^2  
-m_2n_1n_2   }.
\end{eqnarray*}
This parameterization characterizes all sets of 
four mutually tangent Soddy circles of rational radius in the plane.
\end{theorem}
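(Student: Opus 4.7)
The plan is to consolidate the analysis that has been built up throughout this section into a single coherent argument, in three steps: (i) establish the general rational parametrization of $(x_1,x_2,x_3)$ satisfying $P_3 = 0$, (ii) show that the additional requirement of rational radii collapses this parametrization to the clean form stated, and (iii) translate the geometric positivity constraints into the five listed inequalities on $m_1,n_1,m_2,n_2$.

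First, since $P_3(x_1,x_2,x_3) = 0$, Lemma~\ref{lmm:ECnECn-1} gives $x_3 = \EC_2(x_1,x_2) = x_1 x_2 - \sqrt{(1-x_1^2)(1-x_2^2)}$. Writing $x_i = p_i/q_i$ in lowest terms, rationality of $x_3$ forces $(q_1^2 - p_1^2)(q_2^2 - p_2^2)$ to be a perfect square, equivalently $q_i^2 - p_i^2 = s_i^2 \beta$ for a common squarefree $\beta$. Applying Theorem~\ref{thm:pyth_ext} to each primitive triple $p_i^2 + (q_i^2 - p_i^2) = q_i^2$ then yields the general parametrization (\ref{eqn:x1x2}).

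Next I would substitute (\ref{eqn:x1x2}) into the already-derived formula (\ref{eqn:r1}) for $r_1$. The key algebraic simplification, flagged in the text, is that after using $\beta = b_1 c_1 = b_2 c_2$ the expression under the radical collapses to $\beta(c_2 m_1 n_2 + c_1 m_2 n_1)^2$. Consequently $r_1$ is rational precisely when $\beta = 1$, at which point (\ref{eqn:x1x2}) specializes to (\ref{eqn:xi-nice}); a direct substitution into the formulas for $r_1,r_2,r_3$, taking the sign in (\ref{eqn:r1}) to be the one that yields $r_1 > 0$, then produces the stated expressions.

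Finally, to justify the five inequalities, I would appeal to Observation~\ref{obs:90180}: each internal angle lies strictly between $90^{\circ}$ and $180^{\circ}$, so $-1 < x_i < 0$, forcing $n_i > m_i$ in the parametrizations of $x_1$ and $x_2$. Positivity of each $r_i$ then translates into the three remaining inequalities; as already noted, the bound $x_3 < 0$ corresponds to (\ref{eqn:x3-cnstrt}), which is strictly weaker than the one required for $r_2 > 0$ and hence absorbed. The most delicate step will be confirming surjectivity -- that every 3-petal flower with rational radii and center coin of radius one arises in this way -- but this is immediate, because rational radii yield rational cosines via the law of cosines, after which Proposition~\ref{prp:cos-sin-rat} together with the analysis above forces the configuration to fit the parametrization.
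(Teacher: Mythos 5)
Your proposal is correct and follows essentially the same route as the paper: solving $P_3=0$ for $x_3$ via Lemma~\ref{lmm:ECnECn-1}, invoking Theorem~\ref{thm:pyth_ext} to get the $\beta$-parametrization, observing that the radical in (\ref{eqn:r1}) collapses to $\beta(c_2 m_1 n_2 + c_1 m_2 n_1)^2$ so that rationality of the radii forces $\beta=1$, and then deriving the five inequalities from Observation~\ref{obs:90180} and positivity of the $r_i$. This is precisely how the paper assembles the theorem from the preceding development of Section~\ref{sec:n=3}.
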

{\sc Example:}
Consider $m_1=1$, $n_1=2$, $m_2=4$, and $n_2=5$.  
We can see that the constraints will be satisfied, in particular
the nontrivial ones 
$m_1 n_2 + m_2 n_1 = 1 \cdot 5 + 4 \cdot 2 = 13>10 = 2 \cdot 5 = n_1 n_2$,
$n_1 (m_1 n_2 + m_2 n_1) = 2 (1 \cdot 5 + 4 \cdot 2) = 26 
> 25=  5 (1 + 4) =  n_2 (m_1^2 + n_1 ^2)$ and 
$n_1 (m_2^2 + n_2^2)= 2 ( 16 + 25) = 82 >65 
= 5(1 \cdot 5 + 4 \cdot 2) =  n_2 (m_1 n_2 +  m_2 n_1)$.
Then we have $x_1 = -\frac{3}{ 5}$, $x_2 = -\frac{9}{ 41}$,
$x_3= -\frac{133}{ 205}$, and the corresponding radii
$r_1  = 26$ ,$r_2  =  \frac{54}{11}$, $r_3= \frac{ 351}{ 59}$.
By scaling by the factor of $\gcd(11,59) = 649$ we obtain
an integral flower with center radius of $r= 649$ and
the outer radii $r_1  =  16874$, $r_2  =  3186$ and $r_3=  3861$.

\subsection{Descartes' circle theorem and another parametrization}

A nice relation connecting the radii of four mutually
tangent Soddy circles in the Euclidean plane is given
by Descartes' circle theorem~\cite{Austin}.
\begin{theorem}[Descartes]  
A collection of four mutually tangent circles in the plane, where 
$b_i = 1/r_i$ denotes the curvatures of the circles, satisfies 
the relation
\[
b_1^2 + b_2^2 + b_3^2 + b_4^2=\frac{1}{2} (b_1 + b_2 + b_3 + b_4)^2.
\]
\end{theorem}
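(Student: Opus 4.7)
The plan is to derive Descartes' relation directly from the polynomial identity $P_3(x_1,x_2,x_3) = x_1^2+x_2^2+x_3^2-2x_1x_2x_3-1 = 0$ established in Section~\ref{sec:irr-poly}, which holds for the cosines $x_i = \cos\theta_i$ of the internal angles of any $3$-petal flower. By~(\ref{eqn:coslaw}), for a center coin of radius $r$ and consecutive petal radii $r_i,r_{i+1}$, each $x_i$ is a specific rational function of $r,r_i,r_{i+1}$; hence any configuration of four mutually externally tangent circles of radii $r,r_1,r_2,r_3$ (which is automatically a $3$-petal flower) produces a solution of $P_3=0$ under this substitution. This reduces the proof of Descartes' identity to a polynomial computation in $r,r_1,r_2,r_3$, and in turn to one in the curvatures $b=1/r,\;b_i=1/r_i$.

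Next I would substitute the rational expressions for $x_1,x_2,x_3$ into $P_3$ and clear denominators. This is precisely the explicit computation already carried out in the $n=3$ example in Section~\ref{sec:setup}, which shows that the resulting rational equation factors as $16h^2=0$ where
\[
h \;=\; r^2\bigl(r_1^2r_2^2 + r_1^2r_3^2 + r_2^2r_3^2 - 2r_1r_2r_3(r_1+r_2+r_3)\bigr) \;-\; 2r\,r_1r_2r_3(r_1r_2 + r_2r_3 + r_1r_3) \;+\; r_1^2r_2^2r_3^2.
\]
Dividing the identity $h=0$ by $r^2 r_1^2 r_2^2 r_3^2$ and substituting $b=1/r,\;b_i=1/r_i$ turns this into the single curvature relation
\[
b^2+b_1^2+b_2^2+b_3^2 \;=\; 2(b_1b_2+b_1b_3+b_2b_3) \;+\; 2b(b_1+b_2+b_3).
\]

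To finish, I would expand $(b+b_1+b_2+b_3)^2$ as the sum of the four squares plus twice the six pairwise products, and then use the displayed equation to replace twice the sum of pairwise products by the sum of squares; this yields $(b+b_1+b_2+b_3)^2 = 2(b^2+b_1^2+b_2^2+b_3^2)$, which is Descartes' relation after relabeling $b$ as $b_4$. The only nontrivial algebra is the substitution and collection of terms behind the second paragraph, and it has already been executed in the paper, so no real obstacle remains. A more conceptual alternative would be to use the Weierstrass substitution $t_i=\tan(\theta_i/2)$ together with the classical identity $t_1+t_2+t_3=t_1t_2t_3$ (equivalent to $\theta_1/2+\theta_2/2+\theta_3/2=\pi$) and then express each $t_i$ in terms of $b,b_i,b_{i+1}$ to arrive at the same curvature identity; but the route through $P_3=0$ is more direct and recycles the computations already present in the paper.
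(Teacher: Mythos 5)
Your proposal is correct, but note that the paper itself offers \emph{no} proof of this statement: Descartes' Circle Theorem is quoted as a classical result with a citation to Austin, and the only related remark in the paper is that the parametrization of Theorem~\ref{thm:param} can be checked against it. What you do is genuinely different and arguably more in the spirit of the paper: you derive Descartes' relation from the paper's own machinery, namely the vanishing of $P_3=x_1^2+x_2^2+x_3^2-2x_1x_2x_3-1$ at the cosines of the central angles together with the explicit polynomial $g$ already computed in the $n=3$ example of Section~\ref{sec:setup}. Your rewriting of that polynomial as $h$ and the division by $r^2r_1^2r_2^2r_3^2$ are correct and yield $b^2+b_1^2+b_2^2+b_3^2=2\sum_{i<j}b_ib_j$ over all four curvatures, which is equivalent to Descartes' identity by the expansion of $(b+b_1+b_2+b_3)^2$; I verified the term-by-term correspondence. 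The one point you gloss over is the parenthetical claim that four mutually externally tangent circles are ``automatically'' a $3$-petal flower: this holds only when the coin chosen as the center is the one inscribed in the curvilinear triangle formed by the other three (otherwise the three central angles need not sum to $2\pi$). Since such a choice always exists and the resulting curvature identity is symmetric in all four curvatures, this does not damage the argument, but it deserves a sentence. What your route buys is a self-contained proof of the (all-positive-curvature) case actually asserted in the paper, recycling computations already present; what it does not give is the full signed-curvature version of Descartes' theorem covering an enclosing circle, which the paper's statement also does not address.
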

Four mutually tangent circles in the plane are many times 
referred to as \emph{Soddy circles} for Frederick Soddy, an English 
chemist who rediscovered Descartes' Circle Theorem in 1936~\cite{Austin}.
This theorem has also been generalized to higher dimensions.

It is straightforward to check that our rational parameterization from 
Theorem~\ref{thm:param} satisfies Descartes' circle theorem.
Another elegant parametrization of integer Soddy circles are
given by Graham et al.~in~\cite{Graham} in the following
theorem.
\begin{theorem}[Graham et al.]
\label{thm:graham}
The following parametrization characterizes the integral 
curvatures of a set of Soddy circles:
\[
b_1=x,\ \ b_2=d_1-x,\ \ b_3= d_2-x,\ \ b_4= -2m + d_1 + d_2 -x,
\]
where $x^2 + m^2 = d_1 d_2$ and $0 \leq 2m \leq d_1 \leq d_2$. 
\end{theorem}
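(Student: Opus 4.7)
The plan is to derive the parametrization directly from Descartes' Circle Theorem by viewing it as a quadratic equation in one of the curvatures. Writing $S := b_1 + b_2 + b_3$ and expanding
\[
b_1^2 + b_2^2 + b_3^2 + b_4^2 = \tfrac{1}{2}(S + b_4)^2
\]
gives the quadratic $b_4^2 - 2Sb_4 + \bigl(2(b_1^2 + b_2^2 + b_3^2) - S^2\bigr) = 0$, whose discriminant simplifies (using $S^2 - (b_1^2+b_2^2+b_3^2) = 2(b_1b_2 + b_1b_3 + b_2b_3)$) to $16(b_1b_2 + b_1b_3 + b_2b_3)$. Hence
\[
b_4 = S \pm 2\sqrt{b_1b_2 + b_1b_3 + b_2b_3},
\]
and for $b_4 \in \mathbb{Z}$ the integer $b_1b_2 + b_1b_3 + b_2b_3$ must be a perfect square $m^2$ with $m \geq 0$; the choice of sign $-2m$ selects one of the two Soddy completions.

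Next I would introduce the change of variables $x := b_1$, $d_1 := b_1 + b_2$, $d_2 := b_1 + b_3$, so that $b_2 = d_1 - x$, $b_3 = d_2 - x$, and $b_4 = S - 2m = d_1 + d_2 - x - 2m$. The crucial algebraic identity
\[
d_1 d_2 = (b_1 + b_2)(b_1 + b_3) = b_1^2 + b_1b_2 + b_1b_3 + b_2b_3 = x^2 + m^2
\]
recovers exactly the constraint $x^2 + m^2 = d_1 d_2$ in the theorem. Thus every integral Soddy quadruple (for the chosen sign of $b_4$) arises from some tuple $(x,m,d_1,d_2) \in \mathbb{Z}^4$ satisfying the stated Diophantine relation.

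For the ordering constraints $0 \leq 2m \leq d_1 \leq d_2$, I would relabel so that $b_1$ is the smallest free curvature and $b_2 \leq b_3$; this forces $d_1 \leq d_2$ automatically, and the inequality $2m \leq d_1$ is equivalent (after squaring) to $(b_1 + b_2)^2 \geq 4(b_1b_2 + b_1b_3 + b_2b_3)$, which one can show corresponds precisely to choosing $b_1$ to be the curvature of the coin whose removal leaves the ``tight'' inner Soddy configuration. The converse direction is routine: given $(x, m, d_1, d_2)$ satisfying $x^2 + m^2 = d_1 d_2$ with the inequalities, one substitutes the four curvatures into Descartes' identity, and a direct expansion using $d_1 d_2 - x^2 = m^2$ shows both sides agree; realizability as actual circles then follows from the classical converse of Descartes' theorem.

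The main obstacle I expect is \emph{not} the algebra, which is essentially forced by the quadratic formula and the factorization $d_1 d_2 = x^2 + m^2$, but rather verifying that the inequality constraints $0 \leq 2m \leq d_1 \leq d_2$ yield a clean, non-redundant normal form: one must check that for every unordered integral Soddy quadruple there is a (unique) relabeling of $(b_1, b_2, b_3, b_4)$ placing it in this normal form, and that the two sign choices in the quadratic formula correspond exactly to the inner and outer Soddy circles of the triple $\{b_1, b_2, b_3\}$.
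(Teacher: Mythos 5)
First, a point of reference: the paper does not prove this statement at all --- it is quoted from Graham et al.~\cite{Graham} and used only for comparison with Theorem~\ref{thm:param} --- so your attempt has to be judged on its own merits rather than against an in-paper argument. Your algebraic core is correct and is the standard derivation: viewing Descartes' relation as a quadratic in $b_4$ does give discriminant $16(b_1b_2+b_1b_3+b_2b_3)$, integrality of $b_4=S\pm 2\sqrt{b_1b_2+b_1b_3+b_2b_3}$ forces $b_1b_2+b_1b_3+b_2b_3=m^2$, and the substitution $x=b_1$, $d_1=b_1+b_2$, $d_2=b_1+b_3$ yields $d_1d_2=b_1^2+(b_1b_2+b_1b_3+b_2b_3)=x^2+m^2$ exactly as you say.

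The genuine gap is the normal-form step, which you correctly identify as the main obstacle but then resolve with a rule that is false. Your prescription ``relabel so that $b_1$ is the smallest free curvature and $b_2\le b_3$'' does not produce $0\le 2m\le d_1\le d_2$. Take the integral Descartes quadruple $(-1,2,3,6)$ (indeed $1+4+9+36=50=\tfrac{1}{2}(10)^2$). Choosing $b_1=-1$, the three possible choices of $b_4$ give $(2m,d_1)=(6,2)$, $(4,1)$, and $2m=-2<0$, every one of which violates $0\le 2m\le d_1$. The admissible labelling is $b_1=6$, $\{b_2,b_3\}=\{-1,2\}$, $b_4=3$, giving $(x,m,d_1,d_2)=(6,2,5,8)$ with $36+4=40=5\cdot 8$ and $0\le 4\le 5\le 8$; that is, $b_1$ must here be taken to be the \emph{largest} curvature. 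So the existence (and uniqueness) of a labelling of an arbitrary integral Soddy quadruple satisfying the stated inequalities is precisely the nontrivial content of the theorem, and your sketch does not establish it. This is not a pedantic complaint: the paper itself remarks, immediately after quoting the theorem, that the quadruples arising from its own Theorem~\ref{thm:param} satisfy the \emph{opposite} inequality $d_1\le 2m$ under the natural labelling, which is exactly the delicacy your argument glosses over. A complete proof needs either Graham et al.'s reduction to a root quadruple or an explicit argument that some permutation of every integral quadruple lands in the stated fundamental domain.
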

We conclude this section by briefly comparing our rational parametrization
to the one given by Theorem~\ref{thm:graham}.  Suppose we have 
a 3-petal flower, the coins of which have integer radii.  Further, assume
the center coin is the first one with curvature $b_1$. By scaling
to make the center coin of radius one and conveniently permuting indices, 
the remaing outer coins have radii $r_1,r_2,r_3$ given by
\[
r_1= \frac{b_1}{b_2},\ \ r_2 = \frac{b_1}{b_4},\ \ r_3=\frac{b_1}{b_3}.
\]
By Theorem~\ref{thm:param} we have that
\begin{eqnarray*}
\frac{b_1}{b_2} & = & \frac{n_1( m_1n_2 + m_2 n_1)}{ -m_1^2 n_2  
- n_1^2 n_2 + m_1 n_1 n_2  +m_2 n_1^2 }\\
\frac{b_1}{b_3} & = & \frac{n_2(m_1n_2 +m_2 n_1)}{ - m_1n_2^2  
-m_2n_1n_2   +n_1n_2^2 +  m_2^2n_1}\\
\frac{b_1}{b_4} & = & \frac{n_1 n_2}{ - n_1 n_2 + m_2 n_1
+ m_1n_2 }.
\end{eqnarray*}
Replacing each $b_i$ with the integer parametrization from 
Theoerm~\ref{thm:graham} we can solve for $d_1/x,d_2/x$ and $m/x$
in terms of $m_1, m_2, n_1, n_2,$ and obtain 
\[
\frac{m}{x} = \frac{(n_1 n_2 - m_1 m_2)}{m_1n_2 + m_2n_1},\ \ 
\frac{d_1}{x} =\frac{n_2 (m_1^2 + n_1^2)}{n_1(m_1n_2 + m_2 n_1)}, \ \ 
\frac{d_2}{x} =\frac{n_1(m_2^2 + n_2^2)}{n_2(m_1 n_2 + m_2 n_1)}.
\]
Hence, $1 +(m/x)^2 = (d_1/x)(d_2/x)$ so the quadratic equation
relating the parameters in Theorem~\ref{thm:graham} is satisfied.

The first inequality, $0 \leq 2m$, will clearly holds
since  when we choose $m_i <n_i$ for $i=1,2$.

Using the inequality constraints from Theorem~\ref{thm:param} we obtain
\[
\frac{d_1}{x} = \frac{n_2 (m_1^2 + n_1^2)}{n_1(m_1n_2 + m_2 n_1)}
\leq \frac{n_1 n_2(m_1n_2 + m_2n_1)}{n_1n_2(m_1n_2 + m_2 n_1)}
\leq \frac{n_1(m_2^2 + n_2^2)}{n_2(m_1 n_2 + m_2 n_1)}
= \frac{d_2}{x},
\]
and hence third inequality $d_1 \leq d_2$ holds.  

However, the second inequality, $2m \leq d_1$ in 
Theorem~\ref{thm:graham}, does not hold.
In fact, one can show that the {\em opposite} inequality holds 
for all $m_1, m_2, n_1, n_2,$ that satisfy 
the conditional inequalities given in Theorem~\ref{thm:param}.
This does not mean there is anything wrong with the parametrization
in either Theorem~\ref{thm:param} or Theorem~\ref{thm:graham},
since different range for parameters certainly can yield same 
solution set. Insisting the opposite $d_1\leq 2m$ in Theorem~\ref{thm:graham}
might also yield all integer curvatures of Soddy circles.

Although equivalent, there is a subtle difference between presenting
the integer radii of Soddy circles and presenting the integer
curvatures. Suppose we
have integer curvatures $b_1,b_2,b_3$ and $b_4$ of Soddy circles,
and we would like to find the corresponding scaled configuration
of Soddy circles with integer radii. Hence we are seeking 
$r_1,r_2,r_3,r_4$ and $N$ such that $N/r_i = b_i$ for each
$i$. As $\lcm(b_1,b_2,b_3,b_4)$ divides $N$ we have that 
$r_i = k\cdot\lcm(b_1,b_2,b_3,b_4)/b_i$ for each $i$, where $k$
is some positive integer. The other conversion, from integer
radii to integer curvatures is similar.

In conclusion, we see that Graham et al.'s characterization of integer
curvatures of Soddy circles in Theorem~\ref{thm:graham} is implied by 
our rational parametrization of the radii of 3-petal flowers
in Theorem~\ref{thm:param}. In addition, the parametrization given 
by Graham et al in Theorem~\ref{thm:graham} 
relies on solving the Diophantine equation $x^2 + m^2 = d_1 d_2$ for
each chosen $x$, $d_1$, and $d_2$, while 
the parametrization developed here and given in Theorem~\ref{thm:param} 
does not rely on satisfying any such equation, only inequalities. 

\subsection*{Acknowledgments}  

Sincere thanks to the anonymous referees for ...
%related references,
%helpful comments and spotting some well-hidden typos.

\flushright{\today}

\flushleft

\appendix

\section{Generalizations of the Pythagorean Triples}
\label{appx:beta}

In the following, a {\em primitive solution} is a solution 
where $x, y,$ and $z$ are pairwise relatively prime.
To prove Theorem~\ref{thm:pyth_ext}, we need the following:
\begin{claim}
\label{claim:rel-prime}
If r,s,t are positive integers such that r and s are relatively prime and 
$rs=t^2$ then there are relatively prime integers m and n such that $r=m^2$ 
and $s=n^2$.
\end{claim}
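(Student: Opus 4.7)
The plan is to prove this classical fact via the fundamental theorem of arithmetic. I would begin by writing the prime factorizations
$r = \prod_p p^{a_p}$, $s = \prod_p p^{b_p}$, and $t = \prod_p p^{c_p}$,
where each product ranges over all primes and all exponents are nonnegative integers (finitely many nonzero).

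Next I would translate the two hypotheses into statements about exponents. The coprimality condition $\gcd(r,s)=1$ says that for every prime $p$, at least one of $a_p$ and $b_p$ equals zero. The equation $rs = t^2$ yields $a_p + b_p = 2 c_p$ for each $p$.

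Combining these, if $p \mid r$ then $b_p = 0$, so $a_p = 2c_p$ is even; symmetrically, if $p \mid s$, then $b_p = 2c_p$ is even. Therefore every prime appearing in $r$ does so with even multiplicity, as does every prime appearing in $s$. Setting $m = \prod_p p^{a_p/2}$ and $n = \prod_p p^{b_p/2}$ gives positive integers with $r = m^2$ and $s = n^2$. Finally, $\gcd(m,n)=1$ follows because any prime dividing both $m$ and $n$ would divide both $r$ and $s$, contradicting the hypothesis.

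The proof is essentially routine bookkeeping with prime exponents, so I do not anticipate any real obstacle; the claim is a direct consequence of unique factorization in $\mathbb{Z}$. An alternative, more abstract route would invoke the general fact that in any UFD, coprime elements whose product is a square are individually squares (up to units), but spelling the argument out prime by prime is the cleanest exposition and gives the relatively prime $m$ and $n$ explicitly.
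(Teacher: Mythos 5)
Your argument is correct and complete: the translation of $\gcd(r,s)=1$ and $rs=t^2$ into conditions on prime exponents, the conclusion that every exponent in $r$ and in $s$ is even, and the coprimality of the resulting $m$ and $n$ are all sound. Note that the paper itself states this claim without proof, treating it as a standard consequence of unique factorization, so your write-up is exactly the routine argument the authors are implicitly relying on.
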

\begin{proof}[Theorem \ref{thm:pyth_ext}]
Note that $\gcd(b, c)=1$.  This proof follows and extends the exposition 
in~\cite{Rosen}.  

Assume $x, y, z$ form a primitive solution.  In this case, $x$ and $y$ 
cannot both be even.

Case 1: $x,y$ are both odd.  Then $x^2 \equiv 1 \pmod 4$ and 
$y^2 \equiv 1 \pmod 4$, giving $z^2 \equiv 1+ \beta \pmod 4$.  
Since $z^2 \equiv 0,1 \pmod 4$, then $\beta \equiv 0$ or 
$\beta \equiv 3 \pmod 4$ must hold.  However, $\beta \equiv 0 \pmod 4$ 
implies that $4$ divides $\beta$, contradicting the assumption that 
$\beta$ is square-free.  So the only case to consider here is the case 
where $z$ is even and $\beta \equiv 3 \pmod 4$.
\begin{eqnarray}\label{factor_beta}
\beta y^2	& = & z^2 - x^2 =  (z+x)(z-x).
\end{eqnarray}
Letting $\gcd(z+x,z-x) = d$ we get that $d$ divides both $z+x+z-x=2z$ and 
$z+x-(z-x)= 2x$.  Since $x$ and $z$ are relatively prime, $d=1$ or $2$.  
Since both $z+x$ and $z-x$ are odd, then $d=1$ must hold.  Since now 
$\gcd(z+x, z-x)=1$ we have from (\ref{factor_beta}) that for some 
factorization $\beta=bc$ then $r=z+x$ is divisible by $b$ and $s=z-x$ 
is divisible by $c$.  Since $\gcd \left(\frac{r}{b}, \frac{s}{c}\right) = 1$, 
we have by Claim \ref{claim:rel-prime} that $m^2 = \frac{r}{b}$ and 
$n^2 = \frac{s}{c}$, and hence $y=mn$, 
$x= \frac{r-s}{2}= \frac{b m^2- c n^2}{2}$, and 
$z= \frac{r+s}{2}= \frac{b m^2+ c n^2}{2}$.

Case 2: $x$ is even and $y$ is odd.  Then $x^2 \equiv 0 \pmod 4$ and 
$y^2 \equiv 1 \pmod 4$, giving $z^2 \equiv \beta \pmod 4$.  Therefore 
$\beta \equiv 0$ or $\beta \equiv 1 \pmod 4$.  However, $
\beta \equiv 0 \pmod 4$ implies that $4$ divides $\beta$, 
again contradicting the assumption that $\beta$ is square-free.  
So the only case to consider here is the case where $z$ is odd and 
$\beta \equiv 1 \pmod 4$, which proceeds exactly as in case 1.

Case 3: $x$ is odd and $y$ is even.  Then $x^2 \equiv 1 \pmod 4$ and 
$y^2 \equiv 0 \pmod 4$, giving $z^2 \equiv 1 \pmod 4$, and so $z$ is odd.

Unlike cases 1 and 2, $z+x$ and $z-x$ are both even.  Letting 
$\gcd \left(\frac{z+x}{2},\frac{z-x}{2}\right) = d$ we get that 
$d$ divides $\frac{z+x+z-x}{2}=z$ and $\frac{z+x-(z-x)}{2}= x$.  
Since $x$ and $z$ are relatively prime, $d=1$.  Now we have 
$\frac{\beta y^2}{4} = rs$ where $r=\frac{z+x}{2}$ and 
$s=\frac{z-x}{2}$.  Hence $b$ divides $r$ and $c$ divides $s$ for 
some appropriate factorization $\beta= b c$. Since 
$\gcd \left(\frac{r}{b}, \frac{s}{c}\right) = 1$, so we have by 
Claim \ref{claim:rel-prime} that $m^2 = \frac{r}{b}$ and 
$n^2 = \frac{s}{c}$ and hence $y=2mn$, $x= r-s= b m^2- c n^2$, and 
$z= r+s= b m^2+ c n^2$.

For the other direction, first we show that $x, y, z$ as given in 
cases 1 and 2 do form a solution:
\begin{eqnarray*}
x^2 + \beta y^2 	
& = & \left(\frac{b m^2-c n^2}{2}\right)^2 + \beta (mn)^2\\			
& = & \frac{(b m^2)^2- 2 b m^2 c n^2+ (c n^2)^2}{4} + \beta(mn)^2\\	
& = & \frac{(b m^2)^2+ 2 \beta m^2 n^2+ (c n^2)^2}{4} \\
& = & \left(\frac{b m^2+c n^2}{2}\right)^2.
\end{eqnarray*}
Also for case 3 we get:
\begin{eqnarray*}
x^2 + \beta y^2 	
& = & \left(b m^2-c n^2\right)^2 + \beta(2mn)^2	\\
& = & (b m^2)^2- 2 b m^2 c n^2+ (c n^2)^2+ \beta(2mn)^2\\
& = & (b m^2)^2+ 2 \beta m^2 n^2+ (c n^2)^2 \\
& = & \left(b m^2+c n^2\right)^2.
\end{eqnarray*}

To show that the triple is primitive for cases 1 and 2, assume on the 
contrary that $\gcd(x,y,z)=d>1$.  Then there is a prime $p$ that divides 
$d$.  This $p$ divides $x$ and $z$ and also their sum and difference: 
$x+z= \frac{b m^2- c n^2}{2} + \frac{b m^2+ c n^2}{2} = b m^2$ and
$x-z= \frac{b m^2- c n^2}{2} - \frac{b m^2+ c n^2}{2} = c n^2$.  This 
contradicts the assumption that $b m^2$ and $c n^2$ are relatively prime.

For case 3, again assume on the contrary that $(x,y,z)=d>1$.  Then there 
is an odd prime $p$ that divides $d$.  $p \neq 2$ because $x$ and $z$ are 
both odd.  This $p$ divides $x$ and $z$ and also their sum and difference: 
$x+z = 2 b m^2$ and $x-z = 2 c n^2$.  Again, this contradicts the assumption 
that $b m^2$ and $c n^2$ are relatively prime.
\end{proof}

\section{The polynomial $P_5$}
\label{appx:P5}

\begin{eqnarray*}
P_5	
& = & P_4(x_1, x_2, x_3, \EC_2(x_4, x_5)) 
\cdot P_4(x_1, x_2, x_3, \overline{\EC_2}(x_4, x_5))\\		
& = & x_5^8-8 x_1 x_2 x_3 x_4 x_5^7-8 x_3^2 x_4^2 x_5^6+
4 x_2^2 x_5^6-4 x_5^6+4 x_3^2 x_5^6+16 x_1^2 x_2^2 x_3^2 x_5^6\\		
& - & 8 x_2^2 x_3^2 x_5^6-8 x_1^2 x_4^2 x_5^6-8 x_1^2 x_3^2 x_5^6-
8 x_1^2 x_2^2 x_5^6+4 x_4^2 x_5^6-8 x_2^2 x_4^2 x_5^6\\
& + & 16 x_1^2 x_3^2 x_4^2 x_5^6+16 x_2^2 x_3^2 x_4^2 x_5^6
+4 x_1^2 x_5^6+16 x_1^2 x_2^2 x_4^2 x_5^6+40 x_1 x_2^3 x_3 x_4 x_5^5\\
& + & 40 x_1 x_2 x_3 x_4^3 x_5^5-32 x_1^3 x_2 x_3 x_4^3 x_5^5
+40 x_1^3 x_2 x_3 x_4 x_5^5-32 x_1^3 x_2 x_3^3 x_4 x_5^5\\		
& - & 32 x_1 x_2^3 x_3^3 x_4 x_5^5-32 x_1 x_2^3 x_3 x_4^3 x_5^5
-24 x_1 x_2 x_3 x_4 x_5^5-32 x_1^3 x_2^3 x_3 x_4 x_5^5\\
& - & 32 x_1 x_2 x_3^3 x_4^3 x_5^5+40 x_1 x_2 x_3^3 x_4 x_5^5
+64 x_1^2 x_2^4 x_3^2 x_4^2 x_5^4-16 x_1^4 x_4^2 x_5^4\\
& + & 28 x_2^2 x_4^2 x_5^4-16 x_3^2 x_4^4 x_5^4
-24 x_1^2 x_2^2 x_3^2 x_5^4+28 x_1^2 x_4^2 x_5^4-12 x_3^2 x_5^4\\
& + & 28 x_2^2 x_3^2 x_5^4-16 x_2^2 x_3^4 x_5^4-16 x_2^2 x_4^4 x_5^4
+64 x_1^2 x_2^2 x_3^4 x_4^2 x_5^4-24 x_2^2 x_3^2 x_4^2 x_5^4\\
& + & 16 x_1^4 x_4^4 x_5^4-12 x_4^2 x_5^4-24 x_1^2 x_3^2 x_4^2 x_5^4
+6 x_5^4+6 x_2^4 x_5^4-16 x_1^2 x_3^4 x_5^4\\
& - & 16 x_1^2 x_4^4 x_5^4+6 x_4^4 x_5^4+64 x_1^4 x_2^2 x_3^2 x_4^2 x_5^4
+16 x_1^4 x_3^4 x_5^4+16 x_2^4 x_3^4 x_5^4\\
& - & 16 x_1^4 x_2^2 x_5^4-24 x_1^2 x_2^2 x_4^2 x_5^4+16 x_1^4 x_2^4 x_5^4
+16 x_3^4 x_4^4 x_5^4+6 x_3^4 x_5^4-12 x_2^2 x_5^4\\
& - & 144 x_1^2 x_2^2 x_3^2 x_4^2 x_5^4+64 x_1^2 x_2^2 x_3^2 x_4^4 x_5^4
+28 x_1^2 x_3^2 x_5^4-16 x_2^4 x_4^2 x_5^4\\
& - & 16 x_2^4 x_3^2 x_5^4-16 x_3^4 x_4^2 x_5^4+28 x_3^2 x_4^2 x_5^4
+28 x_1^2 x_2^2 x_5^4-12 x_1^2 x_5^4-16 x_1^4 x_3^2 x_5^4\\
& + & 6 x_1^4 x_5^4-16 x_1^2 x_2^4 x_5^4+16 x_2^4 x_4^4 x_5^4
+112 x_1^3 x_2 x_3^3 x_4 x_5^3+112 x_1^3 x_2 x_3 x_4^3 x_5^3\\
& + & 40 x_1 x_2 x_3^5 x_4 x_5^3-32 x_1^5 x_2 x_3 x_4^3 x_5^3
-32 x_1 x_2^5 x_3 x_4^3 x_5^3+112 x_1^3 x_2^3 x_3 x_4 x_5^3\\
& + & x_1 x_2^5 x_3 x_4 x_5^3+40 x_1^5 x_2 x_3 x_4 x_5^3
+40 x_1 x_2 x_3 x_4^5 x_5^3-32 x_1^3 x_2 x_3 x_4^5 x_5^3\\
& - & 112 x_1 x_2^3 x_3 x_4 x_5^3-32 x_1 x_2 x_3^5 x_4^3 x_5^3
-32 x_1^5 x_2 x_3^3 x_4 x_5^3-112 x_1^3 x_2 x_3 x_4 x_5^3\\
& + & 112 x_1 x_2^3 x_3^3 x_4 x_5^3-32 x_1 x_2^3 x_3^5 x_4 x_5^3
-32 x_1 x_2^5 x_3^3 x_4 x_5^3-32 x_1 x_2^3 x_3 x_4^5 x_5^3\\
& - & 32 x_1^3 x_2^5 x_3 x_4 x_5^3+112 x_1 x_2 x_3^3 x_4^3 x_5^3
+112 x_1 x_2^3 x_3 x_4^3 x_5^3-112 x_1 x_2 x_3^3 x_4 x_5^3\\
& - & 112 x_1 x_2 x_3 x_4^3 x_5^3+72 x_1 x_2 x_3 x_4 x_5^3
-128 x_1^3 x_2^3 x_3^3 x_4^3 x_5^3-32 x_1^5 x_2^3 x_3 x_4 x_5^3\\
& - & 32 x_1^3 x_2 x_3^5 x_4 x_5^3-32 x_1 x_2 x_3^3 x_4^5 x_5^3
+16 x_1^2 x_2^6 x_4^2 x_5^2+28 x_3^2 x_4^4 x_5^2+28 x_1^4 x_3^2 x_5^2\\
& - & 8 x_2^2 x_4^6 x_5^2-12 x_4^4 x_5^2-16 x_1^4 x_4^4 x_5^2
+16 x_1^2 x_3^2 x_4^6 x_5^2-24 x_1^2 x_2^2 x_3^4 x_5^2-8 x_1^6 x_3^2 x_5^2\\
& - & 32 x_2^2 x_3^2 x_5^2-24 x_2^2 x_3^2 x_4^4 x_5^2
-16 x_1^4 x_2^4 x_5^2-8 x_1^6 x_4^2 x_5^2+40 x_2^2 x_3^2 x_4^2 x_5^2\\
& - & 16 x_1^4 x_3^4 x_5^2-24 x_1^4 x_2^2 x_4^2 x_5^2
+28 x_3^4 x_4^2 x_5^2-8 x_1^2 x_2^6 x_5^2+64 x_1^2 x_2^4 x_3^4 x_4^2 x_5^2\\
& - & 32 x_1^2 x_4^2 x_5^2-16 x_3^4 x_4^4 x_5^2-12 x_3^4 x_5^2
+28 x_2^2 x_3^4 x_5^2-144 x_1^2 x_2^2 x_3^4 x_4^2 x_5^2\\
& + & 16 x_2^2 x_3^6 x_4^2 x_5^2+28 x_1^2 x_4^4 x_5^2+4 x_4^6 x_5^2
+64 x_1^2 x_2^2 x_3^4 x_4^4 x_5^2-24 x_2^4 x_3^2 x_4^2 x_5^2\\
& + & 28 x_2^4 x_3^2 x_5^2-8 x_1^6 x_2^2 x_5^2+16 x_1^6 x_2^2 x_3^2 x_5^2
+16 x_1^2 x_2^2 x_4^6 x_5^2+16 x_2^2 x_3^2 x_4^6 x_5^2\\
& + & 12 x_4^2 x_5^2-24 x_1^4 x_2^2 x_3^2 x_5^2
+16 x_1^2 x_3^6 x_4^2 x_5^2-24 x_1^2 x_2^4 x_3^2 x_5^2-8 x_2^2 x_3^6 x_5^2\\
& + & 64 x_1^4 x_2^2 x_3^2 x_4^4 x_5^2+192 x_1^2 x_2^2 x_3^2 x_4^2 x_5^2
-12 x_2^4 x_5^2-24 x_1^2 x_2^4 x_4^2 x_5^2+12 x_2^2 x_5^2\\
& - & 8 x_1^2 x_3^6 x_5^2+40 x_1^2 x_3^2 x_4^2 x_5^2
-24 x_1^2 x_2^2 x_4^4 x_5^2-32 x_1^2 x_2^2 x_5^2+64 x_1^4 x_2^2 x_3^4 x_4^2 x_5^2\\
& + & 28 x_2^2 x_4^4 x_5^2-8 x_1^2 x_4^6 x_5^2-4 x_5^2
+4 x_1^6 x_5^2+12 x_1^2 x_5^2+28 x_1^4 x_4^2 x_5^2-16 x_2^4 x_3^4 x_5^2\\
& + & 16 x_1^6 x_3^2 x_4^2 x_5^2-8 x_2^6 x_4^2 x_5^2
+16 x_1^6 x_2^2 x_4^2 x_5^2+64 x_1^4 x_2^4 x_3^2 x_4^2 x_5^2\\
& - & 24 x_1^2 x_3^2 x_4^4 x_5^2+12 x_3^2 x_5^2+16 x_1^2 x_2^2 x_3^6 x_5^2
+16 x_2^6 x_3^2 x_4^2 x_5^2+16 x_1^2 x_2^6 x_3^2 x_5^2\\
& + & 40 x_1^2 x_2^2 x_4^2 x_5^2-8 x_3^2 x_4^6 x_5^2
-24 x_1^2 x_3^4 x_4^2 x_5^2-16 x_2^4 x_4^4 x_5^2+28 x_1^2 x_3^4 x_5^2\\
\end{eqnarray*}
\begin{eqnarray*}
& - & 144 x_1^2 x_2^2 x_3^2 x_4^4 x_5^2+28 x_2^4 x_4^2 x_5^2
-8 x_3^6 x_4^2 x_5^2-32 x_2^2 x_4^2 x_5^2+64 x_1^2 x_2^4 x_3^2 x_4^4 x_5^2\\
& + & 40 x_1^2 x_2^2 x_3^2 x_5^2-24 x_2^2 x_3^4 x_4^2 x_5^2
-144 x_1^2 x_2^4 x_3^2 x_4^2 x_5^2-12 x_1^4 x_5^2+4 x_3^6 x_5^2\\
& + & 28 x_1^2 x_2^4 x_5^2-144 x_1^4 x_2^2 x_3^2 x_4^2 x_5^2
-8 x_2^6 x_3^2 x_5^2+28 x_1^4 x_2^2 x_5^2-32 x_3^2 x_4^2 x_5^2\\
& - & 32 x_1^2 x_3^2 x_5^2+4 x_2^6 x_5^2-24 x_1^4 x_3^2 x_4^2 x_5^2
-24 x_1 x_2^5 x_3 x_4 x_5-112 x_1^3 x_2 x_3 x_4^3 x_5\\
& - & 112 x_1^3 x_2^3 x_3 x_4 x_5+40 x_1^3 x_2 x_3^5 x_4 x_5
-112 x_1 x_2 x_3^3 x_4^3 x_5-32 x_1 x_2^3 x_3^5 x_4^3 x_5\\
& - & 32 x_1^3 x_2 x_3^5 x_4^3 x_5-8 x_1 x_2 x_3^7 x_4 x_5
+40 x_1^3 x_2 x_3 x_4^5 x_5-24 x_1 x_2 x_3 x_4^5 x_5\\
& - & 112 x_1 x_2^3 x_3 x_4^3 x_5+40 x_1 x_2^5 x_3^3 x_4 x_5
+40 x_1 x_2^3 x_3 x_4^5 x_5-32 x_1^5 x_2 x_3^3 x_4^3 x_5\\
& - & 32 x_1^3 x_2^3 x_3^5 x_4 x_5+40 x_1^3 x_2^5 x_3 x_4 x_5
-8 x_1 x_2^7 x_3 x_4 x_5-8 x_1 x_2 x_3 x_4^7 x_5+112 x_1 x_2^3 x_3^3 x_4^3 x_5\\
& + & 40 x_1 x_2^5 x_3 x_4^3 x_5-112 x_1^3 x_2 x_3^3 x_4 x_5
+40 x_1^5 x_2 x_3^3 x_4 x_5+72 x_1 x_2 x_3^3 x_4 x_5\\
& - & 32 x_1 x_2^3 x_3^3 x_4^5 x_5+72 x_1 x_2^3 x_3 x_4 x_5
+40 x_1^5 x_2^3 x_3 x_4 x_5+40 x_1^5 x_2 x_3 x_4^3 x_5\\
& - & 32 x_1^3 x_2^3 x_3 x_4^5 x_5-24 x_1 x_2 x_3^5 x_4 x_5
-32 x_1^5 x_2^3 x_3 x_4^3 x_5-32 x_1^3 x_2^5 x_3 x_4^3 x_5\\
& - & 32 x_1 x_2^5 x_3^3 x_4^3 x_5+112 x_1^3 x_2^3 x_3 x_4^3 x_5
+40 x_1 x_2 x_3^5 x_4^3 x_5-40 x_1 x_2 x_3 x_4 x_5\\
& + & 40 x_1 x_2 x_3^3 x_4^5 x_5-32 x_1^5 x_2^3 x_3^3 x_4 x_5
-112 x_1 x_2^3 x_3^3 x_4 x_5-24 x_1^5 x_2 x_3 x_4 x_5\\
& - & 32 x_1^3 x_2^5 x_3^3 x_4 x_5+72 x_1 x_2 x_3 x_4^3 x_5
-8 x_1^7 x_2 x_3 x_4 x_5+112 x_1^3 x_2^3 x_3^3 x_4 x_5\\
& + & 112 x_1^3 x_2 x_3^3 x_4^3 x_5-32 x_1^3 x_2 x_3^3 x_4^5 x_5
+40 x_1 x_2^3 x_3^5 x_4 x_5+72 x_1^3 x_2 x_3 x_4 x_5\\
& + & 28 x_1^4 x_3^2 x_4^2+16 x_1^2 x_2^6 x_3^2 x_4^2
-24 x_1^2 x_2^2 x_3^2 x_4^4+28 x_1^2 x_2^2 x_4^4-8 x_1^2 x_3^2 x_4^6\\
& - & 16 x_1^2 x_2^4 x_4^4+28 x_2^2 x_3^4 x_4^2+28 x_1^4 x_2^2 x_4^2
-32 x_1^2 x_2^2 x_4^2-8 x_1^2 x_2^2 x_4^6+4 x_3^2 x_4^6\\
& - & 24 x_1^2 x_2^2 x_3^4 x_4^2-12 x_3^2 x_4^4+16 x_2^4 x_3^4 x_4^4
+16 x_1^2 x_2^2 x_3^6 x_4^2+4 x_2^2 x_4^6-32 x_1^2 x_3^2 x_4^2\\
& + & 4 x_2^2 x_3^6-24 x_1^2 x_2^4 x_3^2 x_4^2-12 x_2^4 x_4^2
+28 x_1^2 x_3^4 x_4^2+6 x_1^4 x_4^4-16 x_1^4 x_3^2 x_4^4+28 x_1^2 x_2^4 x_4^2\\
& + & 12 x_1^2 x_3^2+6 x_3^4 x_4^4+16 x_1^4 x_2^4 x_4^4-8 x_2^2 x_3^2 x_4^6
-12 x_2^2 x_4^4-32 x_2^2 x_3^2 x_4^2-8 x_2^2 x_3^6 x_4^2\\
& + & 12 x_2^2 x_4^2+4 x_2^6 x_3^2-16 x_1^4 x_2^2 x_3^4-16 x_2^4 x_3^4 x_4^2
+40 x_1^2 x_2^2 x_3^2 x_4^2+6 x_3^4-8 x_2^6 x_3^2 x_4^2\\
& - & 4 x_2^2-16 x_1^2 x_3^4 x_4^4+x_3^8-12 x_1^2 x_4^4-16 x_2^4 x_3^2 x_4^4
+6 x_1^4 x_3^4-16 x_1^4 x_2^4 x_4^2-12 x_1^4 x_2^2\\
& + & 16 x_1^4 x_3^4 x_4^4-4 x_4^2-8 x_1^2 x_3^6 x_4^2+x_4^8-8 x_1^2 x_2^6 x_3^2
-16 x_1^4 x_3^4 x_4^2+28 x_1^2 x_2^2 x_3^4\\
& + & 4 x_3^6 x_4^2+16 x_1^4 x_2^4 x_3^4+16 x_1^6 x_2^2 x_3^2 x_4^2
+12 x_2^2 x_3^2+4 x_1^2 x_3^6+4 x_1^2 x_4^6+4 x_2^6 x_4^2\\
& - & 8 x_1^2 x_2^2 x_3^6+16 x_1^2 x_2^2 x_3^2 x_4^6-4 x_4^6-8 x_1^6 x_2^2 x_3^2
-12 x_1^2 x_2^4-16 x_1^4 x_2^4 x_3^2-12 x_1^2 x_3^4\\
& - & 12 x_2^4 x_3^2-16 x_2^2 x_3^4 x_4^4+12 x_1^2 x_4^2+x_1^8+4 x_1^6 x_4^2
-24 x_1^4 x_2^2 x_3^2 x_4^2-8 x_1^2 x_2^6 x_4^2\\
& + & 6 x_4^4+12 x_3^2 x_4^2+28 x_2^2 x_3^2 x_4^4+6 x_1^4 x_2^4+6 x_1^4
+28 x_1^4 x_2^2 x_3^2+28 x_2^4 x_3^2 x_4^2+6 x_2^4 x_3^4\\
& - & 32 x_1^2 x_2^2 x_3^2+4 x_1^2 x_2^6-4 x_3^2-4 x_1^6-4 x_1^2
-8 x_1^6 x_3^2 x_4^2+x_2^8-16 x_1^4 x_2^2 x_4^4-16 x_1^2 x_2^4 x_3^4\\
& + & 4 x_1^6 x_2^2+6 x_2^4 x_4^4-4 x_3^6-8 x_1^6 x_2^2 x_4^2
-12 x_3^4 x_4^2+12 x_1^2 x_2^2-12 x_2^2 x_3^4+28 x_1^2 x_3^2 x_4^4\\
& - & 12 x_1^4 x_4^2+28 x_1^2 x_2^4 x_3^2+1-4 x_2^6-12 x_1^4 x_3^2
+6 x_2^4+4 x_1^6 x_3^2.
\end{eqnarray*}

\end{document}